\newtheorem{teo}{Theorem}
\newtheorem{defi}{Definition}
\newtheorem{lemma}{Lemma}
\newtheorem{cor}{Corollary}
\newtheorem{prop}{Proposition}
\newtheorem{rem} {Remark}
\DeclareMathOperator{\dif}{Diff}
\DeclareMathOperator{\Tr}{Tr}
\DeclareMathOperator{\tr}{tr}
\DeclareMathOperator{\sgn}{sgn}
\DeclareMathOperator{\spe}{spec}
\DeclareMathOperator{\omo}{Hom}
\title{\huge The $L^2-$Atiyah-Bott-Lefschetz theorem on manifolds with conical singularities. A heat kernel approach.}
\author{Francesco Bei \bigskip \\
 Dipartimento di Matematica, La Sapienza Universit\`a di Roma\\ E-mail addresses: bei@mat.uniroma1.it,\    francescobei27@gmail.com}
\date{}
\begin{document}

\maketitle

\bigskip

\begin{abstract}
Using an approach based on the heat kernel we prove an Atiyah-Bott-Lefschetz  theorem for the $L^2-$Lefschetz numbers associated to an elliptic complex of cone differential operators over a compact manifold with conical singularities. We then apply our results to the case of the de Rham complex.
\end{abstract}
\vspace{1 cm}

\noindent\textbf{Keywords}: Atiyah-Bott-Lefschetz theorem, elliptic complexes, differential cone operators, heat kernel,  geometric endomorphisms,  manifolds with conical singularities.\vspace{1 cm}

\noindent\textbf{Mathematics subject classification}:  58J10, 14F40, 14F43.

\section*{Introduction}
The Atiyah-Bott-Lefschetz theorem for elliptic complexes, see \cite{ABL1}, is a landmark of elliptic theory on closed manifold. After its publication in 1969, several papers have been devoted to this theorem, to explore its applications, to investigate new approaches to its proof and to find some generalizations. For example in \cite{ABL2} the authors use their first paper to explore  applications to the classical elliptic complexes arising in differential geometry;  in \cite{JMB}, \cite{PG}, \cite{KO}, \cite{LZ} and  \cite{P}   the  heat kernel approach is developed, while in  \cite{JB}  an approach using probabilistic methods is employed. In \cite{BRS},  \cite{NSSS}, \cite{NS} \cite{SMS},\cite{MS},  \cite{DT} and \cite{TOTO} the Atiyah-Bott-Lefschetz theorem is extended to some kind of manifolds  that are not closed: for example  \cite{NSSS}  is devoted to the case of elliptic conic operators on manifold with conical singularities, in \cite{SMS} the case of a  manifold with cylindrical ends is studied and in \cite{MS} the case of a complex of Hecke operators over an arithmetic variety is studied.  In particular the use of the heat kernel turned out to be a powerful tool in order to get alternative proofs and extensions of the theorem. Since the heat kernel associated to a conic operator has been intensively studied in the last thirty years, e.g. \cite{BS}, \cite{BS2} \cite{BSE},  \cite{BSE2}, \cite{JC},\cite{MAL} and \cite{EMO},   it is interesting  to  explore its applications  in this context as well, that is to prove an  Atiyah-Bott-Lefschetz theorem over a manifold with conical singularities using the heat kernel.
This is precisely the goal of this paper. \\Our geometric framework is the following: given a compact and orientable manifold with isolated conical singularities $X$,  we consider over its regular part, $reg(X)$ (usually labeled $M$), a complex of elliptic conic differential operators:
\begin{equation}
\label{pergola}
0\rightarrow C^{\infty}_{c}(M,E_{0})\stackrel{P_{0}}{\rightarrow}C^{\infty}_{c}(M,E_{1})\stackrel{P_{1}}{\rightarrow}...\stackrel{P_{n-1}}{\rightarrow}C^{\infty}_{c}(M,E_n)\stackrel{P_{n}}{\rightarrow}0
\end{equation} 
and a geometric endomorphism $T=(T_0,...,T_n)$ of the complex, that is for each $i=0,..,n$,  $T_i=\phi_i\circ f^*$ where $f:X\rightarrow X$ is an isomorphism and $\phi_i:f^*E_i\rightarrow  E_i$ is a bundle homomorphism. Using a conic metric over $M$ we associate to \eqref{pergola}  two Hilbert complexes $(L^2(M,E_i), P_{max/min,i})$ and then, as first step,  we recall the following important property:
\begin{itemize}
\item The cohomology groups of $(L^2(M,E_i), P_{max/min,i})$ are finite dimensional.
\end{itemize}
This result  follows directly from the Fredholm property of elliptic cone operators, see \cite{MAL} Prop. 1.3.16 or \cite{JIM} Prop. 3.14.\\ Afterwards     assuming that $f$ satisfies the following  condition: 
 $$f: \overline{M}\rightarrow \overline{M}\ \text{is\ a\ diffeomorphism}$$ (where $\overline{M}$ is a manifold with boundary which desingularizes  $X$, see Prop. \ref{palcano}, and $f$ is supposed to admit an extension on the whole $\overline{M}$),
 we prove that: 
\begin{itemize}
\item  Each $T_i$ extends to a bounded map acting on $L^2(M,E_i)$ such that $(T_{i+1}\circ P_{max/min,i})(s)=(P_{max/min,i}\circ T_i)(s)$ for each $s\in \mathcal{D}(P_{max/min,,i}).$
\end{itemize}
In this way we can associate to $T$ and \eqref{pergola} two $L^2-$Lefschetz numbers $L_{2,max/min}(T)$ defined as 
\begin{equation}
\label{lapezza}
L_{2,max/min}(T):=\sum_{i=0}^n(-1)^i\Tr(T_i^*:H^i_{2,max/min}(M,E_i)\rightarrow H^i_{2,max/min}(M,E_i))
\end{equation}
Subsequently, using the operators $\mathcal{P}_{i}:=P_{i}^t\circ P_{i}+P_{i-1}\circ P_{i-1}^t$, its absolute and relative extension
and the fact that respective heat operators $e^{-t\mathcal{P}_{abs/rel,i}}:L^2(M,E_i)\rightarrow L^2(M,E_i)$ are trace-class operators we prove the following results:
\begin{itemize}
\item $L_{2,max/min}(T)=\sum_{i=0}^n(-1)^i\Tr(T_i\circ e^{-t\mathcal{P}_{abs/rel,i}})$ for every $t>0$. 
\end{itemize}

After this, to improve the above formula, we require some particular properties about $f$; more precisely we require that:
\begin{itemize}
\item  $f$ fixes each singular point of  $X$. 
\item $Fix(f)$, the fixed points of $f$, is made only by \emph{simple fixed points}.
\end{itemize}
The second requirement means that if $f(q)=q$ and $q\in M$  then the diagonal of $M\times M$ is transverse to the graph on $f$ in $(q,q)$ while if $f(q)=q$ and $q\in sing(X)$ then it  means the following:  over  a neighborhood $U_q $ of $q$, $U_q\cong C_2(L_q)$ the cone over $L_q$,  $f$ takes clearly the form
\begin{equation}
\label{cerronew}
f(r,p)=(rA(r,p),B(r,p)).
\end{equation}
(We make the additional assumption  that  $A(r,p):[0,1)\times L_q\rightarrow [0,1)$ and $B(r,p):[0,1)\times L_q\rightarrow L_q$ are smooth up to zero).   Then we will say that the fixed point is a \textbf{simple} fixed point if for each $p\in L_q$ at least one of the following conditions is satisfied (for more details see Definition \ref{serramaggio}):
\begin{enumerate}
\item   $A(0,p)\neq 1$.
\item  $B(0,p)\neq p$.
\end{enumerate}
Under this conditions, we prove the formula below: $$L_{2,max/min}(T)=\lim_{t\rightarrow 0}(\sum_{q\in Fix(f)}\sum_{i=0}^n(-1)^i\int_{U_q}\tr(\phi_i\circ k_{abs/rel,i}(t,f(x),x))dvol_g)$$ where $\phi_i\circ k_{abs/rel,i}(t,f(x),x)$ is the smooth kernel of $T_i \circ e^{-t\mathcal{P}_{abs/rel},i}$ and $U_q$ is a neighborhood of $q$ (obviously when $q\in sing(X)$ then we mean the regular part of $U_q$).
 Moreover under some additional hypothesis,  in particular that  \eqref{cerronew} modifies in the following way:
\begin{equation}
\label{cerronews}
f(r,p)=(rA(p),B(p))
\end{equation}
we prove the following formulas,  (see Theorem \ref{edimburgo}), which are the main result of the paper  :
\begin{equation}
\label{vercelli}
L_{2,max/min}(T)=\sum_{q\in Fix(f)\cap M}\sum_{i=0}^n\frac{(-1)^i\Tr(\phi_i)}{|det(Id-d_q(f))|}+\sum_{q\in  sing(X)}\sum_{i=0}^n(-1)^i\zeta_{T_i,q}(\mathcal{P}_{abs/rel,i})(0)
\end{equation}
\begin{equation}
\label{stresa}
\zeta_{T_i,q}(\mathcal{P}_{abs/rel,i})(0)=\frac{1}{2\nu}\int_{0}^{\infty}\frac{dx}{x}\int_{L_q} \tr(\phi_i\circ e^{-x\mathcal{P}_{abs/rel,i}}(A(p),B(p),1,p))dvol_h.
\end{equation}

Finally, in the last part of the paper, we apply the previous results to the de Rham complex. We get an analytic construction of the Lefschetz numbers arising in intersection cohomology  and a topological interpretation of the contributions given by the singular points to the $L^2-$Lefschetz numbers. In particular, under suitable conditions,  we prove the following formula:
\begin{equation}
\label{pontesasso}
I^{\underline{m}}L(f)=L_{2,max}(T)=\sum_{q\in Fix(f)\cap reg(X)}\sgn det(Id-d_qf)+
\end{equation}
$$+\sum_{q\in  sing(X)}\sum_{i<\frac{m+1}{2}}(-1)^i\Tr(B^*:H^i(L_q)\rightarrow H^i(L_q))$$
where $I^{\underline{m}}L(f)$ is the intersection Lefschetz number arising in intersection cohomology, $T$ is the endomorphism of $(L^2\Omega^i(M,g), d_{max,i})$ induced by $f$ and  $B$ is the diffeomorphism of the link $L_q$ such that, in a neighborhood of $q$,  $f$ satisfies \eqref{cerronews}. In particular  from \eqref{pontesasso} we get:
\begin{equation}
\label{mogadiscio}
\sum_{i=0}^{m+1}(-1)^i\zeta_{T_i,q}(\Delta_{abs,i})(0)=\sum_{i<\frac{m+1}{2}}(-1)^i\Tr(B^*:H^i(L_q)\rightarrow H^i(L_q)).
\end{equation}
\vspace{1 cm}

\noindent As recalled at the beginning of the introduction also \cite{NSSS} is devoted to the Atiyah-Bott-Lefschetz theorem on manifold with conical singularities. Anyway there are some substantial differences between our paper and \cite{NSSS}: the notion of ellipticity used there, which is taken from \cite{BWS}, is stronger than that one used in this paper; in particular  the de Rham complex is not elliptic for the definition given in  \cite{BWS}. Moreover    the complexes  considered in \cite{NSSS} are complexes of weighted Sobolev space while our complexes are Hilbert complexes of unbounded operator defined on some natural extensions of their core domain; finally also the techniques used are different because we use the heat kernel while in \cite{NSSS}  the existence of a  parametrix of an elliptic cone operator  is used.
Some results of this paper are also close to  results proved in \cite{MAL}: indeed in \cite{MAL} the heat kernel is studied in an equivariant situation and an equivariant index theorem is proved (see  Corollary 2.4.7 ). Also in this case there are some relevant differences: the Lie group $G$ acting in \cite{MAL} is a {\em compact} Lie group of isometry, while in our work we just require  that the map $f$ is a diffeomorphism. Moreover the non degeneracy conditions that we require on the fixed point of $f$ led us to different formulas to those  stated in \cite{MAL}.  On the other hand, for the geometric endomorphisms  considered in \cite{MAL}, that is those  induced by  isometries $g$ lying in a compact Lie group $G$, the formula obtained by Lesch applies to a more general  case than the ours because in his work there are not assumptions on the fixed points set while in our work there are. \\
 Moreover, as recalled above, the last part of this paper contains several applications to the de Rham complex which are not mentioned in the other papers. \vspace{1 cm}

\textbf{Acknowledgment}.  I wish to thank Paolo Piazza for having suggested this subject, for  his help and for many helpful discussions. I wish also to thank Pierre Albin for having invited me to spend the months of March and April 2012 at  the University of Illinois at Urbana-Champaign and  for many interesting discussions.

\section{Background}
\subsection{Hilbert complexes}

In this first subsection we recall briefly the notion of Hilbert complex and how it appears in riemannian geometry. We refer to \cite{BL} for a thorough discussion about this subject.
\begin{defi} A Hilbert complex is a complex, $(H_{*},D_{*})$ of the form:
\begin{equation}
\label{mm}
0\rightarrow H_{0}\stackrel{D_{0}}{\rightarrow}H_{1}\stackrel{D_{1}}{\rightarrow}H_{2}\stackrel{D_{2}}{\rightarrow}...\stackrel{D_{n-1}}{\rightarrow}H_{n}\rightarrow 0,
\end{equation}
where each $H_{i}$ is a separable Hilbert space and each map $D_{i}$ is a closed operator called the differential such that:
\begin{enumerate}
\item $\mathcal{D}(D_{i})$, the domain of $D_{i}$, is dense in $H_{i}$.
\item $ran(D_{i})\subset \mathcal{D}(D_{i+1})$.
\item $D_{i+1}\circ D_{i}=0$ for all $i$.
\end{enumerate}
\end{defi}
The cohomology groups of the complex are $H^{i}(H_{*},D_{*}):=Ker(D_{i})/ran(D_{i-1})$. If the groups $H^{i}(H_{*},D_{*})$ are all finite dimensional we say that it is a  $Fredholm\ complex$. 

Given a Hilbert complex there is a dual Hilbert complex
\begin{equation}
0\leftarrow H_{0}\stackrel{D_{0}^{*}}{\leftarrow}H_{1}\stackrel{D_{1}^{*}}{\leftarrow}H_{2}\stackrel{D_{2}^{*}}{\leftarrow}...\stackrel{D_{n-1}^{*}}{\leftarrow}H_{n}\leftarrow 0,
\label{mmp}
\end{equation}
defined using $D_{i}^{*}:H_{i+1}\rightarrow H_{i}$, the Hilbert space adjoints of the differentials\\ $D_{i}:H_{i}\rightarrow H_{i+1}$. The cohomology groups of $(H_{j},(D_{j})^*)$, the dual Hilbert  complex, are $$H^{i}(H_{j},(D_{j})^*):=Ker(D_{n-i-1}^{*})/ran(D_{n-i}^*).$$\\For all $i$ there is also a laplacian $\Delta_{i}=D_{i}^{*}D_{i}+D_{i-1}D_{i-1}^{*}$ which is a self-adjoint operator on $H_{i}$ with domain 
\begin{equation}
\label{saed}
\mathcal{D}(\Delta_{i})=\{v\in \mathcal{D}(D_{i})\cap \mathcal{D}(D_{i-1}^{*}): D_{i}v\in \mathcal{D}(D_{i}^{*}), D_{i-1}^{*}v\in \mathcal{D}(D_{i-1})\}
\end{equation} and nullspace: 
\begin{equation}
\label{said}
\mathcal{H}^{i}(H_{*},D_{*}):=ker(\Delta_{i})=Ker(D_{i})\cap Ker(D_{i-1}^{*}).
\end{equation}

The following propositions are  standard results for these complexes. The first result is a weak Kodaira decomposition:

\begin{prop}
\label{beibei}
 [\cite{BL}, Lemma 2.1] Let $(H_{i},D_{i})$ be a Hilbert complex and $(H_{i},(D_{i})^{*})$ its dual complex, then: $$H_{i}=\mathcal{H}^{i}\oplus\overline{ran(D_{i-1})}\oplus\overline{ran(D_{i}^{*})}.$$
\label{kio}
\end{prop}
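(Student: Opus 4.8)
The plan is to obtain the threefold splitting by iterating the standard orthogonal decomposition attached to a single closed operator and then using the complex property $D_{i}\circ D_{i-1}=0$ to identify the pieces.

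\textbf{Step 1.} I would first recall the elementary fact that for any densely defined closed operator $A$ between Hilbert spaces one has $ran(A)^{\perp}=Ker(A^{*})$, hence $\overline{ran(A)}=Ker(A^{*})^{\perp}$; applying this to $A^{*}$ and using $A^{**}=A$ (valid since $A$ is closed) gives $\overline{ran(A^{*})}=Ker(A)^{\perp}$. Taking $A=D_{i}$ this already yields the orthogonal splitting $H_{i}=Ker(D_{i})\oplus\overline{ran(D_{i}^{*})}$, which isolates the third summand.

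\textbf{Step 2.} It then remains to split $Ker(D_{i})$. By axiom (2) in the definition of a Hilbert complex $ran(D_{i-1})\subseteq\mathcal{D}(D_{i})$, and by axiom (3) $D_{i}\circ D_{i-1}=0$, so $ran(D_{i-1})\subseteq Ker(D_{i})$; since $D_{i}$ is closed, $Ker(D_{i})$ is closed, hence $\overline{ran(D_{i-1})}\subseteq Ker(D_{i})$. Therefore, computing orthogonal complements inside the Hilbert space $Ker(D_{i})$, I get $Ker(D_{i})=\overline{ran(D_{i-1})}\oplus\bigl(Ker(D_{i})\cap\overline{ran(D_{i-1})}^{\perp}\bigr)$. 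By Step 1 applied to $A=D_{i-1}$ one has $\overline{ran(D_{i-1})}^{\perp}=Ker(D_{i-1}^{*})$, so the second summand equals $Ker(D_{i})\cap Ker(D_{i-1}^{*})$, which is exactly $\mathcal{H}^{i}$ by \eqref{said}.

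\textbf{Step 3.} Combining Steps 1 and 2 gives $H_{i}=\mathcal{H}^{i}\oplus\overline{ran(D_{i-1})}\oplus\overline{ran(D_{i}^{*})}$; the sum is orthogonal because $\overline{ran(D_{i}^{*})}=Ker(D_{i})^{\perp}$ while both $\mathcal{H}^{i}$ and $\overline{ran(D_{i-1})}$ sit inside $Ker(D_{i})$, and $\mathcal{H}^{i}\perp\overline{ran(D_{i-1})}$ by the construction in Step 2. There is no genuine obstacle here: this is pure Hilbert-space orthogonality calculus, and the only points that need a little care are the domain bookkeeping (one must invoke axioms (1)--(3) to make sense of $D_{i}\circ D_{i-1}=0$ and to know that $D_{i}^{*}$ and $D_{i-1}^{*}$ are themselves densely defined and closed) and the repeated use of the closedness of the differentials, which is what guarantees both that $Ker(D_{i})$ is closed and that $D_{i}^{**}=D_{i}$.
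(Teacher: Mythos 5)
Your argument is correct: the two orthogonality facts $Ker(D_i)^{\perp}=\overline{ran(D_i^{*})}$ and $\overline{ran(D_{i-1})}^{\perp}=Ker(D_{i-1}^{*})$, together with $\overline{ran(D_{i-1})}\subseteq Ker(D_i)$ from the complex property, give exactly the stated splitting, and your domain/closedness bookkeeping is the only care needed. The paper offers no proof of its own (it simply cites \cite{BL}, Lemma 2.1), and your proof is essentially the standard argument given there, so there is nothing to add.
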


The reduced cohomology groups of the complex are: $$\overline{H}^{i}(H_{*},D_{*}):=Ker(D_{i})/(\overline{ran(D_{i-1})}).$$
By the above proposition there is a pair of  weak de Rham isomorphism theorems:
 \begin{equation}
\left\{
\begin{array}{ll}
\mathcal{H}^{i}(H_{*},D_{*})\cong\overline{H}^{i}(H_{*},D_{*})\\
\mathcal{H}^{i}(H_{*},D_{*})\cong\overline{H}^{n-i}(H_{*},(D_{*})^{*})
\end{array}
\right.
\label{pppp}
\end{equation}
where in the second case we mean the cohomology of the dual Hilbert complex.\\
The complex $(H_{*},D_{*})$ is said $ weak\  Fredholm$  if $\mathcal{H}_{i}(H_{*},D_{*})$ is finite dimensional for each $i$. By the next propositions it follows immediately that each Fredholm complex is a weak Fredholm  complex.

\begin{prop}
\label{topoamotore}
[\cite{BL}, Corollary 2.5] If the cohomology of a Hilbert complex $(H_{*}, D_{*})$ is finite dimensional then, for all $i$,  $ran(D_{i-1})$ is closed  and  $H^{i}(H_{*},D_{*})\cong \mathcal{H}^{i}(H_{*},D_{*}).$
\end{prop}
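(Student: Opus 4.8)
The plan is to reduce this to a standard functional-analytic fact: a closed operator whose range has finite codimension inside a closed subspace containing it has closed range, the proof being an application of the open mapping theorem. Fix $i$ and regard $T:=D_{i-1}$ as a closed operator with target the subspace $K:=Ker(D_i)\subseteq H_i$; this is legitimate because $Ker(D_i)$ is closed (the operator $D_i$ being closed), so the graph of $T$ in $\mathcal D(D_{i-1})\times K$ is closed. Since $D_i\circ D_{i-1}=0$ we have $ran(T)\subseteq K$, and by hypothesis $K/ran(T)=H^i(H_*,D_*)$ is finite dimensional, say of dimension $d$.

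Next I would equip $\mathcal D(T)$ with the graph inner product, turning it into a Hilbert space $G$; then $N:=Ker(T)$ is closed in $G$, and on the orthogonal complement $G_0$ of $N$ in $G$ the operator $T$ is bounded, injective, and still has range $ran(T)$. Choose $v_1,\dots,v_d\in K$ whose classes form a basis of $K/ran(T)$ and put $F:=span(v_1,\dots,v_d)$, so that $K=ran(T)\oplus F$ as vector spaces and $ran(T)\cap F=\{0\}$. Define the bounded operator $S\colon G_0\oplus F\to K$ by $S(x,f):=Tx+f$. It is onto by the choice of $F$, and one-to-one because $Tx+f=0$ forces $f\in ran(T)\cap F=\{0\}$ and hence $x\in N\cap G_0=\{0\}$. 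By the open mapping theorem $S$ is a topological isomorphism of Hilbert spaces, so it carries the closed subspace $G_0\oplus\{0\}$ to a closed subspace; that is, $ran(D_{i-1})=S(G_0\oplus\{0\})$ is closed in $K$, hence closed in $H_i$.

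Once $ran(D_{i-1})$ is known to be closed, the isomorphism claim follows directly: $\overline{ran(D_{i-1})}=ran(D_{i-1})$, so the reduced cohomology $\overline H^i(H_*,D_*)=Ker(D_i)/\overline{ran(D_{i-1})}$ equals $H^i(H_*,D_*)$, and the first weak de Rham isomorphism of \eqref{pppp}, itself a consequence of the Kodaira-type decomposition in Proposition \ref{beibei}, gives $H^i(H_*,D_*)\cong\overline H^i(H_*,D_*)\cong\mathcal H^i(H_*,D_*)$.

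The step I expect to require the most care is the very first reduction: checking that passing to the smaller target $Ker(D_i)$ does not destroy closedness of the operator, together with the observation that a finite-dimensional complement $F$ of $ran(T)$ in $Ker(D_i)$ can be chosen so as to intersect $ran(T)$ only in $0$ — which is automatic for representatives of a basis of the quotient. The remaining ingredients are the open mapping theorem and results already recorded in the excerpt, so no further difficulty is anticipated.
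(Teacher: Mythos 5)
Your proof is correct and complete: the reduction to a closed operator with target the closed subspace $Ker(D_i)$, the open-mapping-theorem argument showing that a closed operator whose range has finite codimension in a closed subspace has closed range, and the final passage through the weak de Rham isomorphism \eqref{pppp} all hold as written (in particular, $ran(D_{i-1})\cap F=\{0\}$ is indeed automatic for representatives of a basis of the quotient). Note that the paper offers no proof of its own here—it simply cites \cite{BL}, Corollary 2.5—and your argument is essentially the standard one found there, so there is nothing to reconcile.
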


\begin{prop}[\cite{BL}, Corollary 2.6] A Hilbert complex  $(H_{j},D_{j}),\ j=0,...,n$ is a Fredholm complex (weak Fredholm) if and only if  its dual complex, $(H_{j},D_{j}^{*})$, is Fredholm (weak Fredholm). If it is Fredholm then
\begin{equation}
\label{rabat}
\mathcal{H}_{i}(H_{j},D_{j})\cong H_{i}(H_{j},D_{j})\cong H_{n-i}(H_{j},(D_{j})^{*})\cong \mathcal{H}_{n-i}(H_{j},(D_{j})^{*}).
\end{equation}
Analogously in the  the weak Fredholm case we have:
\begin{equation}
\mathcal{H}_{i}(H_{j},D_{j})\cong\overline{ H}_{i}(H_{j},D_{j})\cong\overline{ H}_{n-i}(H_{j},(D_{j})^{*})\cong \mathcal{H}_{n-i}(H_{j},(D_{j})^{*}).
\end{equation}
\label{fred}
\end{prop}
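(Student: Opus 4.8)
The plan is to reduce everything to one observation: the spaces of harmonic elements of the complex $(H_j,D_j)$ and of its dual $(H_j,D_j^{*})$ are literally the same subspaces of the $H_j$, merely relabelled by $i\mapsto n-i$. First I would unwind \eqref{mmp}: writing the dual complex left-to-right as $(H_j',D_j')$ with $H_j':=H_{n-j}$, the differential $D_j':H_j'\to H_{j+1}'$ is $D_{n-j-1}^{*}$. Substituting this into \eqref{said} gives, for the harmonic space of the dual complex in degree $i$,
$$\mathcal{H}^{i}(H_j,D_j^{*})=Ker(D_{n-i-1}^{*})\cap Ker\bigl((D_{n-i}^{*})^{*}\bigr)=Ker(D_{n-i-1}^{*})\cap Ker(D_{n-i}),$$
where I used $(D_{n-i}^{*})^{*}=D_{n-i}$ (the differentials are closed and densely defined, so the double adjoint recovers the operator). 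Comparing with $\mathcal{H}^{n-i}(H_j,D_j)=Ker(D_{n-i})\cap Ker(D_{n-i-1}^{*})$ from \eqref{said}, I obtain the identity $\mathcal{H}^{i}(H_j,D_j^{*})=\mathcal{H}^{n-i}(H_j,D_j)$.

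For the weak Fredholm equivalence this identity is already enough: $(H_j,D_j)$ is weak Fredholm iff all $\mathcal{H}^{i}(H_j,D_j)$ are finite dimensional, which is exactly the condition that all $\mathcal{H}^{i}(H_j,D_j^{*})$ are finite dimensional, i.e. that the dual complex is weak Fredholm. For the chain of isomorphisms I would then simply assemble pieces already available in the excerpt: $\mathcal{H}^{i}(H_j,D_j)\cong\overline{H}^{i}(H_j,D_j)$ is the first weak de Rham isomorphism in \eqref{pppp}; $\overline{H}^{i}(H_j,D_j)\cong\overline{H}^{n-i}(H_j,D_j^{*})$ follows by combining the two isomorphisms of \eqref{pppp}; and $\overline{H}^{n-i}(H_j,D_j^{*})\cong\mathcal{H}^{n-i}(H_j,D_j^{*})$ is the first isomorphism of \eqref{pppp} applied to the dual complex (with the same $n$). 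Concatenating these, together with the identity from the first paragraph, yields the stated chain, and all four terms are finite dimensional.

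For the Fredholm statement the extra input is that what separates ``Fredholm'' from ``weak Fredholm'' is finiteness of the genuine (rather than reduced) cohomology, and that this extra condition is symmetric under passage to the adjoint complex. Concretely: if $(H_j,D_j)$ is Fredholm then by Proposition \ref{topoamotore} every $ran(D_{i-1})$ is closed and $H^{i}(H_j,D_j)\cong\mathcal{H}^{i}(H_j,D_j)$, so in particular the complex is weak Fredholm and hence, by the previous paragraph, so is its dual; moreover $ran(D_{i-1})$ closed is equivalent to $ran(D_{i-1}^{*})$ closed by the closed range theorem, so the dual complex also has all ranges closed, whence its reduced cohomology coincides with its cohomology and the latter is $\cong\mathcal{H}^{i}(H_j,D_j^{*})$ by \eqref{pppp}, hence finite dimensional; so the dual is Fredholm. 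Running the same argument with $D_j$ and $D_j^{*}$ interchanged, and using that the double dual complex is again $(H_j,D_j)$, gives the reverse implication. Finally, in the Fredholm case $H^{i}\cong\overline{H}^{i}$ for both complexes because all ranges are closed, so the weak Fredholm chain of isomorphisms upgrades verbatim to the one stated in \eqref{rabat}, the first and last links being supplied by Proposition \ref{topoamotore}.

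I expect the only genuinely delicate point to be bookkeeping: getting the index shift $i\leftrightarrow n-i$ right throughout, and checking that the $n$ appearing in the weak de Rham isomorphisms for the dual complex is the same $n$ as for the original, so that the two applications of \eqref{pppp} compose correctly. Everything else --- the double adjoint identity $(D^{*})^{*}=D$ for closed densely defined operators, and the equivalence of closedness of $ran(D)$ and of $ran(D^{*})$ --- is standard operator theory that only needs to be invoked.
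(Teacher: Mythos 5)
Your argument is correct: the identification $\mathcal{H}^{i}(H_j,D_j^{*})=\mathcal{H}^{n-i}(H_j,D_j)$ via $(D^{*})^{*}=D$, the weak de Rham isomorphisms \eqref{pppp}, Proposition \ref{topoamotore}, and the closed range theorem assemble exactly as you describe, with the index bookkeeping done correctly. The paper itself offers no proof, quoting the result from \cite{BL} (Corollary 2.6), and your reconstruction is essentially the standard argument underlying that reference, so there is nothing to flag.
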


\begin{prop}
\label{fonteavellana}
 A Hilbert complex  $(H_{j},D_{j}),\ j=0,...,n$ is a Fredholm complex if and only if  for each $i$ the operator $\Delta_{i}$ defined in \eqref{saed} is a Fredholm operator on its domain endowed with the graph norm. 
\end{prop}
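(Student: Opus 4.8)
The plan is to prove both implications by relating the kernel, range and cokernel of $\Delta_i$ to the cohomology of the complex, using the weak Kodaira decomposition (Proposition \ref{beibei}), the closed range theorem, and the fact that $\Delta_i$ is the self-adjoint operator attached to the form $q_i(v)=\|D_iv\|^2+\|D_{i-1}^*v\|^2$.

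For the ``only if'' direction, assume the complex is Fredholm. By Proposition \ref{topoamotore}, $ran(D_{i-1})$ is closed and $\mathcal{H}^i\cong H^i$ is finite dimensional for every $i$; by the closed range theorem $ran(D_i^*)$ is closed as well, so Proposition \ref{beibei} gives the orthogonal splitting $H_i=\mathcal{H}^i\oplus ran(D_{i-1})\oplus ran(D_i^*)$ into closed subspaces. Since $D_iD_{i-1}=0$ one has $\overline{ran(D_{i-1})}\subseteq Ker(D_i)$ and, dually, $\overline{ran(D_i^*)}\subseteq Ker(D_{i-1}^*)$, so on these two summands $\Delta_i$ reduces to $D_{i-1}D_{i-1}^*$, respectively $D_i^*D_i$. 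Using the elementary fact that a closed densely defined operator with closed range restricts to a topological isomorphism from its domain intersected with the orthogonal complement of its kernel onto its range, one checks that $D_{i-1}D_{i-1}^*$ is a bijection of $\mathcal{D}(\Delta_i)\cap ran(D_{i-1})$ onto $ran(D_{i-1})$ with bounded inverse, and similarly for $D_i^*D_i$ on the third summand; on $\mathcal{H}^i$ the operator $\Delta_i$ is zero. Hence $\Delta_i$ is the orthogonal direct sum of the zero operator on the finite dimensional space $\mathcal{H}^i$ and of two boundedly invertible nonnegative self-adjoint operators, so it has closed range, finite dimensional kernel $\mathcal{H}^i$, and finite dimensional cokernel (isomorphic to $\mathcal{H}^i$ by self-adjointness); thus it is Fredholm on $\mathcal{D}(\Delta_i)$ with the graph norm.

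For the ``if'' direction, assume each $\Delta_i$ is Fredholm. Then $\mathcal{H}^i=Ker(\Delta_i)$ is finite dimensional, so the complex is at least weak Fredholm, and it remains to prove that $ran(D_{i-1})$ is closed for every $i$. Since $\Delta_i$ is the self-adjoint operator associated with the closed nonnegative form $q_i$ on the form domain $\mathcal{D}(D_i)\cap\mathcal{D}(D_{i-1}^*)$, the fact that $\Delta_i$ has closed range means $0$ is isolated in its spectrum, and this translates into a coercivity estimate $q_i(v)\geq c_i\|v\|^2$ for all $v$ in the form domain with $v\perp\mathcal{H}^i$. Now fix $w\in\mathcal{D}(D_{i-1})$ with $w\perp Ker(D_{i-1})$; from $D_{i-1}D_{i-2}=0$ one gets $ran(D_{i-2})\subseteq Ker(D_{i-1})$, hence $Ker(D_{i-1})^\perp=\overline{ran(D_{i-1}^*)}\subseteq\overline{ran(D_{i-2})}^\perp=Ker(D_{i-2}^*)$, so $D_{i-2}^*w=0$, $w$ lies in the form domain of $\Delta_{i-1}$, and $w\perp\mathcal{H}^{i-1}$. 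Applying the coercivity estimate for $\Delta_{i-1}$ yields $\|D_{i-1}w\|^2=q_{i-1}(w)\geq c_{i-1}\|w\|^2$, so $ran(D_{i-1})$ is closed. Finally $ran(D_{i-1})$ closed together with $\dim\mathcal{H}^i<\infty$ gives $H^i\cong Ker(D_i)\cap(ran(D_{i-1}))^\perp=Ker(D_i)\cap Ker(D_{i-1}^*)=\mathcal{H}^i$, which is finite dimensional; hence the complex is Fredholm.

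The routine points (the isomorphism-onto-range property of closed operators with closed range, and the identification of $\mathcal{D}(\Delta_i)$ as in \eqref{saed} with the domain of the form operator of $q_i$) I would either quote from \cite{BL} or dispatch in a line. The main obstacle I anticipate is the passage, in the ``if'' direction, from ``$\Delta_i$ has closed range'' to a coercivity estimate valid on the whole form domain $\mathcal{D}(D_i)\cap\mathcal{D}(D_{i-1}^*)$, which is strictly larger than $\mathcal{D}(\Delta_i)$; this requires the form-theoretic description of $\Delta_i$ (equivalently, control of $\Delta_i^{1/2}$ and density of $\mathcal{D}(\Delta_i)$ in the form domain), after which the index bookkeeping relating $\Delta_{i-1}$ to the closedness of $ran(D_{i-1})$ is straightforward.
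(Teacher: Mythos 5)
Your argument is correct. Note, though, that the paper does not prove this proposition at all: it simply cites \cite{BWS}, Lemma 1, p.\ 203, so your proposal is a genuinely self-contained functional-analytic proof rather than a reproduction of the paper's route. Both directions check out. In the ``only if'' direction, the passage from Proposition \ref{topoamotore} plus the closed range theorem to the splitting $H_i=\mathcal{H}^i\oplus ran(D_{i-1})\oplus ran(D_i^*)$, and the observation that $\Delta_i$ is reduced by this splitting (zero on $\mathcal{H}^i$, boundedly invertible on the other two summands), is sound; the only step you wave at is that the three orthogonal projections actually preserve $\mathcal{D}(\Delta_i)$, which is a short verification using $\overline{ran(D_{i-1})}\subseteq Ker(D_i)$, $\overline{ran(D_i^*)}\subseteq Ker(D_{i-1}^*)$ and the definition \eqref{saed}. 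In the ``if'' direction, your reduction of the closedness of $ran(D_{i-1})$ to a coercivity estimate for the form $q_{i-1}$ on vectors $w\perp Ker(D_{i-1})$ (which automatically satisfy $D_{i-2}^*w=0$ and $w\perp\mathcal{H}^{i-1}$) is exactly the right bookkeeping, and you correctly identify the one nontrivial ingredient: that $\Delta_i$, with domain \eqref{saed}, is the self-adjoint operator of the closed form $q_i$ on $\mathcal{D}(D_i)\cap\mathcal{D}(D_{i-1}^*)$, so that the spectral gap of the Fredholm self-adjoint operator $\Delta_i$ yields $q_i(v)\geq c_i\|v\|^2$ on the whole form domain orthogonal to $\mathcal{H}^i$, not just on $\mathcal{D}(\Delta_i)$. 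That identification is standard (one inclusion is immediate from \eqref{saed}, and two self-adjoint operators one of which extends the other coincide), so your proof closes. Compared with the paper's bare citation, your argument has the advantage of working for arbitrary abstract Hilbert complexes with no ellipticity or cone structure, at the cost of invoking the form/square-root calculus for $\Delta_i$; it would be worth writing out the two routine verifications you flag, since they are the only places where a reader could stumble.
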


\begin{proof}
See \cite{BWS}, Lemma 1 pag 203. 
\end{proof}

Now we recall another result which  shows that it is possible to compute the cohomology groups of an Hilbert complex using a core subcomplex $$\mathcal{D}^{\infty}(H_{i})\subset H_{i}.$$ For all $i$ we define  $\mathcal{D}^{\infty}(H_{i})$ as consisting  of all elements $\eta$ that are in the domain of $\Delta_{i}^{l}$ for all $l\geq 0.$

\begin{prop}[\cite{BL}, Theorem 2.12] The complex $(\mathcal{D}^{\infty}(H_{i}),D_{i})$ is a subcomplex quasi-isomorphic to the complex $(H_{i},D_{i})$
\label{dfff}
\end{prop}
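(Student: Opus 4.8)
The plan is to run the standard Hodge-theoretic argument for Hilbert complexes, using the heat semigroup $e^{-t\Delta_i}$, defined by the Borel functional calculus of the non-negative self-adjoint operator $\Delta_i$, as a smoothing homotopy between the identity and a map whose image lies in $\mathcal{D}^{\infty}(H_i)$. First I would record three facts. (i) \emph{Smoothing}: since $\lambda\mapsto\lambda^{l}e^{-t\lambda}$ is bounded on $[0,\infty)$ for every $l\geq 0$, we have $ran(e^{-t\Delta_i})\subset\mathcal{D}(\Delta_i^{l})$ for all $l$, hence $ran(e^{-t\Delta_i})\subset\mathcal{D}^{\infty}(H_i)$. (ii) \emph{Intertwining}: from $\Delta_i=D_i^{*}D_i+D_{i-1}D_{i-1}^{*}$ and $D_{i+1}D_i=0$ one gets $D_i\Delta_i\subset\Delta_{i+1}D_i$ and $D_i^{*}\Delta_{i+1}\subset\Delta_iD_i^{*}$ on the domains described in \eqref{saed}, and consequently $D_ie^{-t\Delta_i}=e^{-t\Delta_{i+1}}D_i$ on $\mathcal{D}(D_i)$ and $D_i^{*}e^{-t\Delta_{i+1}}=e^{-t\Delta_i}D_i^{*}$ on $\mathcal{D}(D_i^{*})$. (iii) \emph{Spectral estimates}: if $D_ie^{-s\Delta_i}v=0$ then $D_i^{*}D_ie^{-s\Delta_i}v=0$, so $D_{i-1}D_{i-1}^{*}e^{-s\Delta_i}v=\Delta_ie^{-s\Delta_i}v$ and the functional calculus gives $\|D_{i-1}^{*}e^{-s\Delta_i}v\|^{2}=\langle\Delta_ie^{-2s\Delta_i}v,v\rangle\leq(2es)^{-1}\|v\|^{2}$, so that the Bochner integral $u_t:=\int_{0}^{t}D_{i-1}^{*}e^{-s\Delta_i}v\,ds$ converges absolutely. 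Using (ii), iterating and keeping track of domains via \eqref{saed} gives $\Delta_{i+1}^{l}D_i\eta=D_i\Delta_i^{l}\eta$ for $\eta\in\mathcal{D}^{\infty}(H_i)$ and every $l$, hence $D_i\eta\in\mathcal{D}^{\infty}(H_{i+1})$; thus $(\mathcal{D}^{\infty}(H_i),D_i)$ is a subcomplex and the inclusion $\iota$ a chain map.

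To prove that $\iota_{*}$ is onto in cohomology I would fix $\omega\in\ker D_i$. By (ii), $D_ie^{-s\Delta_i}\omega=e^{-s\Delta_{i+1}}D_i\omega=0$, so the integral of (iii) is defined for $v=\omega$; truncating it at $s=\varepsilon$, commuting the closed operator $D_{i-1}$ past the then-bounded integrand, using $D_{i-1}D_{i-1}^{*}e^{-s\Delta_i}\omega=\Delta_ie^{-s\Delta_i}\omega$, and letting $\varepsilon\to 0$ should yield $D_{i-1}u_t=\omega-e^{-t\Delta_i}\omega$. Since $e^{-t\Delta_i}\omega\in\mathcal{D}^{\infty}(H_i)\cap\ker D_i$ by (i) and (ii), it represents the class $[\omega]$, so $\iota_{*}$ is surjective.

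To prove injectivity I would take $\eta\in\mathcal{D}^{\infty}(H_i)$ with $\eta=D_{i-1}\xi$, $\xi\in\mathcal{D}(D_{i-1})$ (so $\eta\in\ker D_i$ automatically), and apply the same construction to $\eta$, getting $\eta=e^{-t\Delta_i}\eta+D_{i-1}u_t$ with $u_t=\int_{0}^{t}D_{i-1}^{*}e^{-s\Delta_i}\eta\,ds$. Because $\eta\in\mathcal{D}^{\infty}(H_i)$, applying (iii) with $\Delta_i^{l}\eta$ in place of $v$ together with the intertwining $\Delta_{i-1}^{l}D_{i-1}^{*}\subset D_{i-1}^{*}\Delta_i^{l}$ gives $\|D_{i-1}^{*}\Delta_i^{l}e^{-s\Delta_i}\eta\|\leq(2es)^{-1/2}\|\Delta_i^{l}\eta\|$, and truncating, commuting $\Delta_{i-1}^{l}$ past the integral and passing to the limit shows $u_t\in\mathcal{D}^{\infty}(H_{i-1})$. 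Since moreover $e^{-t\Delta_i}\eta=e^{-t\Delta_i}D_{i-1}\xi=D_{i-1}e^{-t\Delta_{i-1}}\xi$ by (ii), with $e^{-t\Delta_{i-1}}\xi\in\mathcal{D}^{\infty}(H_{i-1})$ by (i), we obtain $\eta=D_{i-1}(e^{-t\Delta_{i-1}}\xi+u_t)$ with primitive in $\mathcal{D}^{\infty}(H_{i-1})$, i.e.\ $[\eta]=0$ already in the subcomplex. Hence $\iota_{*}$ is an isomorphism in every degree.

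The conceptual content is just the weak Kodaira decomposition (Proposition \ref{beibei}), so I expect the main obstacle to be the functional-analytic bookkeeping: verifying that $D_i$, $D_i^{*}$, $\Delta_i$ and their powers commute with $e^{-t\Delta_i}$ on the correct domains (item (ii)), and justifying that these closed unbounded operators may be pulled through the Bochner integrals $\int_{\varepsilon}^{t}(\cdot)\,ds$ and the limit $\varepsilon\to 0$ taken inside. All of this reduces to the smoothing and intertwining properties of the heat semigroup together with the spectral estimates in (iii), and that is the part that must be handled carefully.
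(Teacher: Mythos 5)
The paper itself gives no argument for this proposition: it is quoted from Br\"uning--Lesch and the only ``proof'' supplied is the citation [\cite{BL}, Theorem 2.12]. So the only comparison available is with the standard argument, and your proposal is essentially that argument, carried out correctly in outline: the smoothing property (i), the estimate (iii) with constant $(2es)^{-1}$, the homotopy identity $\omega-e^{-t\Delta_i}\omega=D_{i-1}\int_0^t D_{i-1}^*e^{-s\Delta_i}\omega\,ds$ for $\omega\in\ker D_i$ (obtained from $\frac{d}{ds}e^{-s\Delta_i}\omega=-D_{i-1}D_{i-1}^*e^{-s\Delta_i}\omega$, truncation at $\varepsilon$, and closedness of $D_{i-1}$), the verification that $\mathcal{D}^{\infty}$ is $D$-invariant via \eqref{saed}, and the injectivity step producing a primitive $e^{-t\Delta_{i-1}}\xi+u_t$ in $\mathcal{D}^{\infty}(H_{i-1})$ are all sound.

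The one place where you assert rather than prove is (ii), and it is the load-bearing step: the identity $D_ie^{-t\Delta_i}v=e^{-t\Delta_{i+1}}D_iv$ for \emph{all} $v\in\mathcal{D}(D_i)$ does not follow formally from the inclusion $D_i\Delta_i\subset\Delta_{i+1}D_i$; intertwining of two unbounded self-adjoint operators by an unbounded closed operator on such domains does not automatically pass to their functional calculi. The statement is true, but it needs its own argument. One way, staying inside the abstract Hilbert-complex setting: use that $D+D^{*}$ with domain $\mathcal{D}(D)\cap\mathcal{D}(D^{*})$ is self-adjoint on $\bigoplus_i H_i$ with square $\bigoplus_i\Delta_i$ (this is in \cite{BL}); then $e^{-t\Delta}$ commutes with $D+D^{*}$ on its domain, and comparing degrees gives $D_ie^{-t\Delta_i}w=e^{-t\Delta_{i+1}}D_iw$ for $w\in\mathcal{D}(D_i)\cap\mathcal{D}(D_{i-1}^{*})$. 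For general $v\in\mathcal{D}(D_i)$ one then pairs the difference $D_ie^{-t\Delta_i}v-e^{-t\Delta_{i+1}}D_iv$ against the dense set $\mathcal{D}^{\infty}(H_{i+1})$, moves everything onto the test vector, and uses the already established commutation for those smooth vectors; note that your surjectivity step (applied to $\omega\in\ker D_i$, which need not lie in $\mathcal{D}(D_{i-1}^{*})$) and your step $e^{-t\Delta_i}D_{i-1}\xi=D_{i-1}e^{-t\Delta_{i-1}}\xi$ for a mere $\xi\in\mathcal{D}(D_{i-1})$ genuinely use this full-strength version, so the extension beyond $\mathcal{D}(D)\cap\mathcal{D}(D^{*})$ cannot be skipped. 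With (ii) justified in this way (or simply quoted from \cite{BL}), your proof is complete; similarly, the interchange of the closed operators $D_{i-1}$, $\Delta_{i-1}^{l}$ with the Bochner integrals, which you flag, is the routine graph-norm-continuity argument and poses no problem.
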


As it is well known, riemannian geometry offers a framework in which Hilbert and (sometimes) Fredholm complexes can be built in a natural way.  The rest of this subsection is devoted to recall  these constructions.\\
Let  $(M,g)$ be  an open and oriented riemannian manifold of dimension $m$ and let $E_{0},...,E_{n}$ be vector bundles over $M$. For each $i=0,...,n$ let $C^{\infty}_{c}(M,E_{i})$ be the space of smooth section with compact support. If we put on each vector bundle a metric $h_{i}\ i=0,...,n$ the we can construct in a natural way a sequences of Hilbert space $L^{2}(M,E_{i}),\ i=0,...,n$ as the completion of $C^{\infty}_{c}(M,E_{i}).$ Now suppose that we have a complex of differential operators :
\begin{equation}
0\rightarrow C^{\infty}_{c}(M,E_{0})\stackrel{P_{0}}{\rightarrow}C^{\infty}_{c}(M,E_{1})\stackrel{P_{1}}{\rightarrow}C^{\infty}_{c}(M,E_{2})\stackrel{P_{2}}{\rightarrow}...\stackrel{P_{n-1}}{\rightarrow}C^{\infty}_{c}(M,E_{n})\rightarrow 0,
\label{yygg}
\end{equation}
To turn this complex into a Hilbert complex we must specify a closed extension of $P_{*}$ that is an operator between $L^{2}(M,E_{*})$ and  $L^{2}(M,E_{*+1})$ with closed graph which is an extension of $P_{*}$.   We start recalling  the two canonical closed extensions of $P$.

\begin{defi} The maximal extension $P_{max}$; this is the operator acting on the domain:
\begin{equation} 
\mathcal{D}(P_{max,i})=\{\omega\in L^{2}(M,E_{i}): \exists\ \eta\in L^{2}(M,E_{i+1})
\end{equation}

$$ s.t.\ <\omega,P^t_{i}\zeta>_{L^{2}(M,E_{i})}=<\eta,\zeta>_{L^{2}(M,E_{i+1})}\ \forall\ \zeta\in C_{0}^{\infty}(M,E_{i+1}) \}$$ where $P^t_i$ is the formal adjoint of $P_i$.

In this case $P_{max,i}\omega=\eta.$ In  other words $\mathcal{D}(P_{max,i})$ is the largest set of forms $\omega\in L^{2}(M, E_{i})$ such that $P_{i}\omega$, computed distributionally, is also in $L^{2}(M,E_{i+1}).$
\label{nino}
\end{defi}

\begin{defi} The minimal extension $P_{min,i}$; this is given by the graph closure of $P_{i}$ on $C_{0}^{\infty}(M, E_{i})$ respect to the norm of $L^{2}(M,E_{i})$, that is,
\begin{equation} \mathcal{D}(P_{min,i})=\{\omega\in L^{2}(M,E_{i}): \exists\ \{\omega_{j}\}_{j\in J}\subset C^{\infty}_{0}(M,E_{i}),\ \omega_{j}\rightarrow \omega ,\       P_{i}\omega_{j}\rightarrow \eta\in L^{2}(M,E_{i+1})\} 
\end{equation}
and in this case $P_{min,i}\omega=\eta$
\label{reso}
\end{defi}

Obviously $\mathcal{D}(P_{min,i})\subset \mathcal{D}(P_{max,i})$. Furthermore, from these definitions, it follows immediately that $$P_{min,i}(\mathcal{D}(P_{min,i}))\subset \mathcal{D}(P_{min,i+1}),\ P_{min,i+1}\circ P_{min,i}=0$$ and that $$P_{max,i}(\mathcal{D}(P_{max,i}))\subset \mathcal{D}(P_{max,i+1}),\ P_{max,i+1}\circ P_{max,i}=0.$$ \\Therefore $(L^{2}(M,E_{*}),P_{max/min,*})$ are both Hilbert complexes and their cohomology groups, respectively reduced cohomology groups, are denoted respectively by $H_{2,max/min}^{i}(M, E_{*})$ and  $\overline{H}_{2,max/min}^{i}(M,E_{*})$. 

Another straightforward but important fact is that the Hilbert complex adjoint of \\$(L^{2}(M,E_{*}),P_{max/min,*})$ is $(L^{2}(M,E_{*}),P^t_{min/max,*})$, that is
\begin{equation}
(P_{max,i})^{*}=P^t_{min,i},\     (P_{min,i})^{*}=P^t_{max,i}.
\end{equation}

Using  Proposition \ref{kio}  we obtain two weak Kodaira decompositions:
\begin{equation}
L^{2}(M,E_{i})=\mathcal{H}^{i}_{abs/rel}(M,E_i)\oplus \overline{ran(P_{max/min,i-1})}\oplus \overline{ran(P^t_{min/max,i})}
\label{kd}
\end{equation}
with summands mutually orthogonal in each case. For the first summand on the right, called the absolute or relative Hodge cohomology, we have by \eqref{said}:
\begin{equation}
\label{nannabobo}
\mathcal{H}^{i}_{abs/rel}(M,E_*)=Ker(P_{max/min,i})\cap Ker(P^t_{min/max,i-1}).
\end{equation}
We can also consider the two natural laplacians associated to these Hilbert complexes, that is for each $i$
\begin{equation}
\label{kaak}
\mathcal{P}_{abs,i}:=P_{min,i}^t\circ P_{max,i}+P_{max,i-1}\circ P_{min,i-1}^t
\end{equation}
and 
\begin{equation}
\label{kkll}
\mathcal{P}_{rel,i}:=P_{max,i}^t\circ P_{min,i}+P_{min,i-1}\circ P_{max,i-1}^t
\end{equation}
with domain described in \eqref{saed}. Using \eqref{said} and \eqref{pppp} it follows that the nullspace of  \eqref{kaak} is isomorphic to the absolute Hodge cohomology which is in turn isomorphic to the reduced  cohomology of the Hilbert complex $(L^{2}(M,E_{*}),P_{max,*})$. Analogously,  using again \eqref{said} and \eqref{pppp}, it follows that the nullspace of  \eqref{kkll} is isomorphic to the relative Hodge cohomology which is in turn isomorphic to the reduced  cohomology of the Hilbert complex $(L^{2}(M,E_{*}),P_{min,*})$.\\
Finally we recall that we can define other two Hodge cohomology groups $\mathcal{H}^i_{max/min}(M,E_*)$ defined as
\begin{equation}
\label{aaaaaaaa}
\mathcal{H}^i_{max/min}(M,E_*)=Ker(P_{max/min,i})\cap Ker(P^t_{max/min,i-1}).
\end{equation}

\subsection{Manifolds with conical singularities and differential  cone operators}

\begin{defi}
Let $L$ an open manifold. The cone over $L$, usually labeled $C(L)$, is the topological space defined as
\begin{equation}
\label{chiaserna}
L\times [0,\infty)/(\{0\}\times L).
\end{equation}
The truncated cone, usually labeled $C_{a}(L)$, is defined as 
\begin{equation}
\label{moria}
L\times [0,a)/(\{0\}\times L).
\end{equation}
Finally with $\overline{C_a(L)}$ we mean
\begin{equation}
\label{fossato}
L\times [0,a]/(\{0\}\times L).
\end{equation}
In both the above cases, with $v$, we will label the vertex of the cone or the  truncated cone, that is $C(L)-(L\times (0,\infty))$, $C_{a}(L)-(L\times(0,a))$ and $\overline{C_{a}(L)}-(L\times(0,a])$ respectively.
\end{defi}

\begin{defi}
\label{gubbio}
A manifold with conical singularities $X$ is a metrizable, locally compact,  Hausdorff space such that there exists a sequence of points $\{p_{1},...,p_{n},...\}\subset X$ which satisfies the following properties:
\begin{enumerate}
\item $X-\{p_{1},...,p_{n},...\}$ is a smooth open manifold.
\item For each $p_{i}$ there exist an open neighborhood $U_{p_i}$, a closed manifold $L_{p_i}$ and a  map $\chi_{p_i}:U_{p_i}\rightarrow C_{2}(L_{p_i})$ such that $\chi_{p_i}(p_i)=v$ and $ \chi_{p_i}|_{U_{p_i}-\{p_{i}\}}:U_{p_i}-\{p_i\}\rightarrow L_{p_i}\times (0,2)$  is a diffeomorphism. 
\end{enumerate}
\end{defi}

The regular and the singular part of $X$ are defined as $$sing(X)=\{p_1,...,p_n,...\},\ reg(X):=X-sing(X)=X-\{p_1,...,p_n,...\}.$$The singular points $p_i$ are usually called \emph{conical points} and the smooth closed manifold $L_{p_i}$ is usually called the \emph{link} relative to the point $p_i$. If $X$  is compact then it is clear, from the above definition,  that the sequences of conical points $ \{p_1,...,p_n,...\}$ is made of isolated points  and therefore on $X$ there are just a finite number of conical points.\\A manifold with conical singularities is a particular case of a compact smoothly stratified pseudomanifold; more precisely it is a compact smoothly stratified pseudomanifold with depth 1 and with the singular set made of a sequence of isolated points. Since in this  paper we will work exclusively with compact manifolds with conical singularities we prefer to omit the definition of smoothly compact stratified pseudomanifold and the notions related to it and refer to \cite{ALMP} for a thorough discussion on this subject.\\
\begin{rem}
\label{peschiera}
Let $X$ be a compact manifold with one conical singularity $p$ and let $L_{p}$ its link; it follows from Definition \ref{gubbio} that we can decompose $X$ as $$X\cong \overline{Y}\cup_{L_{p}}\overline{C_{1}(L_{p})}$$ where $\overline{Y}$ is a compact manifold with boundary defined as $X-\chi_{p}^{-1}(C_{1}(L_p))$.  Obviously this decomposition generalizes in a natural way when $X$ has several conical points. As we will see in one of the following sections this decomposition is the starting point to study the heat kernel on $X$ and we will use it to calculate the contribution given by the conical points to the Lefschetz number of some geometric endomorphisms.
\end{rem}

Now we recall from \cite{ALMP} a particular case, which is suitable for our purpose, of an important result which  describe a blowup process  to resolve the singularities of a compact smoothly stratified pseudomanifold.

\begin{prop} 
\label{palcano}
Let $X$ be a compact manifold with conical singularities. Then there exists a manifold with boundary $\overline{M}$ and a blow-down map $\beta:\overline{M}\rightarrow X$ which has the following properties: 
\begin{enumerate}
\item $\beta|_{M}:M\rightarrow reg(X)$, where $M$ is the interior of $\overline{M}$, is a diffeomorphsim.
\item If $N$ is a connected component of $\partial \overline{M}$ and if  $U\cong  N\times [0,1)$ is a collar  neighborhood of $N$ then $\beta(U)=N\times [0,1)/(N\times \{0\})$. In particular $\beta(N)=p$ where $p$ is a conical point of $X$ and $N$ becomes one of the connected components of  the link of $p$.
\item If for each conical point $p_i$  the relative link $L_{p_i}$ is connected,  then there is a bijection between the conical points of $X$ and the connected components of $\partial \overline{M}.$
\end{enumerate}
\end{prop}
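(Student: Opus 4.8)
The statement is recalled from \cite{ALMP}, where the general smoothly stratified case is obtained by an iterated radial blow-up; in the present depth-one, isolated-singularities situation the plan is to argue directly. Since $X$ is compact, by Definition \ref{gubbio} the singular set $sing(X)=\{p_1,\dots,p_k\}$ is finite, and for each $i$ we fix a neighborhood $U_{p_i}$, a closed link $L_{p_i}$ and a homeomorphism $\chi_{p_i}\colon U_{p_i}\to C_2(L_{p_i})$ with $\chi_{p_i}(p_i)=v$ whose restriction $U_{p_i}\setminus\{p_i\}\to L_{p_i}\times(0,2)$ is a diffeomorphism; after shrinking we may take the $U_{p_i}$ pairwise disjoint.

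First I would construct $\overline{M}$ by the gluing procedure of Remark \ref{peschiera}: set $\overline{Y}:=X\setminus\bigcup_i\chi_{p_i}^{-1}(C_1(L_{p_i}))$, a compact manifold with boundary $\bigsqcup_i L_{p_i}$ (the component over $p_i$ being identified with $L_{p_i}\times\{1\}$ via $\chi_{p_i}$), and let $\overline{M}$ be the result of attaching the cylinders $L_{p_i}\times[0,1]$ to $\overline{Y}$ along $L_{p_i}\times\{1\}$. As a topological space this is a compact manifold with boundary $\partial\overline{M}=\bigsqcup_i L_{p_i}$, and its interior $M$ is canonically identified with $reg(X)$, since near $p_i$ the interior reads as $L_{p_i}\times(0,2)\cong reg(X)\cap U_{p_i}$ through $\chi_{p_i}$ and away from the $U_{p_i}$ nothing has been changed. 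I would then install a smooth structure: on $M=reg(X)$ take the given one, and near each boundary component $L_{p_i}\times\{0\}$ use the charts obtained by adjoining the radial coordinate $r\in[0,2)$ to charts of $L_{p_i}$ — equivalently, declare $\chi_{p_i}$, viewed now as a homeomorphism onto $L_{p_i}\times[0,2)$, to be a diffeomorphism onto its image. Finally define $\beta\colon\overline{M}\to X$ by letting $\beta|_M$ be the identification $M\cong reg(X)\hookrightarrow X$ and $\beta\equiv p_i$ on $L_{p_i}\times\{0\}$; continuity of $\beta$ along $\partial\overline{M}$ holds because, read through $\chi_{p_i}$, $\beta$ is the quotient map $L_{p_i}\times[0,2)\to C_2(L_{p_i})$ collapsing $L_{p_i}\times\{0\}$ to $v$, followed by the homeomorphism $\chi_{p_i}^{-1}$.

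The three properties are then immediate. Property 1 holds by construction. For Property 2, a connected component $N$ of $\partial\overline{M}$ is a connected component of a unique $L_{p_i}$; restricting $\chi_{p_i}$ exhibits a collar $U\cong N\times[0,1)$ on which $\beta$ is the radial collapse onto $N\times[0,1)/(N\times\{0\})$, so $\beta(N)=p_i$ and $N$ is, by construction, one of the connected components of the link of $p_i$ — and since $\beta$ collapses exactly $\partial\overline{M}$ to $sing(X)$ and is injective elsewhere, the same description holds for any collar of $N$. Property 3 follows at once: if every $L_{p_i}$ is connected then $\partial\overline{M}=\bigsqcup_{i=1}^k L_{p_i}$ with each summand connected, and $p_i\mapsto L_{p_i}$ is the asserted bijection. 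The one point that genuinely needs care is the verification that $\overline{M}$ really is a smooth manifold with boundary whose interior is $reg(X)$: the two chart families around $\partial\overline{M}$ must be smoothly compatible, which amounts to checking that the smooth transition maps of $reg(X)$ on the overlap $L_{p_i}\times(0,2)\subset reg(X)$ extend smoothly — in fact as a product in the $r$-direction — up to $r=0$. This is exactly where the product collar structure of $L_{p_i}\times(0,2)$ inside $C_2(L_{p_i})$ is used, and it is the only nontrivial step; everything else is bookkeeping.
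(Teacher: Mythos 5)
Your construction is correct, but it proceeds differently from the paper: the paper does not prove Proposition \ref{palcano} at all, it simply invokes \cite{ALMP}, Proposition 2.5, where the resolution is obtained for general compact smoothly stratified pseudomanifolds by an (iterated) radial blow-up. You instead exploit the depth-one, isolated-singularity hypothesis to build $\overline{M}$ by hand: excise the open cones $\chi_{p_i}^{-1}(C_1(L_{p_i}))$ as in Remark \ref{peschiera}, reattach cylinders, equip the result with the atlas consisting of the charts of $reg(X)$ together with the product charts $L_{p_i}\times[0,2)$ coming from $\chi_{p_i}$, and let $\beta$ be the identity on $M\cong reg(X)$ and the collapse of each boundary component to the corresponding conical point; properties 1--3 then follow by inspection. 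This buys a self-contained, elementary argument adequate for the setting of the paper, at the cost of the generality and the extra structure (arbitrary depth, naturality of the resolution, boundary fibration data) that the citation to \cite{ALMP} provides for free. One small remark on your final paragraph: the chart-compatibility step is even easier than you make it sound. An interior chart of $reg(X)$ and a boundary chart $(\psi\times\mathrm{id})\circ\chi_{p_i}$ overlap only in the region $r>0$, where $\chi_{p_i}$ is already a diffeomorphism by Definition \ref{gubbio}, and two boundary charts differ by a transition map of $L_{p_i}$ times the identity in $r$; so no extension of transition maps of $reg(X)$ up to $r=0$ is actually needed (such an extension would be required only if one wanted to compare the smooth structures induced by two different choices of $\chi_{p_i}$, which the existence statement does not demand).
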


\begin{proof}
See \cite{ALMP}, Proposition 2.5.
\end{proof}

Now we introduce a class of natural riemannian  metrics  on these spaces.

\begin{defi}
Let $X$ be a manifold with conical singularities. A conic metric $g$ on $reg(X)$ is riemannian metric with the following property: for each  conical point $p_i$ there exists a map $\chi_{p_i}$, as defined in Definition \ref{gubbio},  such that \begin{equation}
\label{pianello}
(\phi_{p_i}^{-1})^*(g|_{U_{p_{i}}})=dr^2+r^2h_{L{p_{i}}}(r)
\end{equation}
 where $h_{L{p_{i}}}(r)$ depends smoothly on $r$ up to $0$ and for each fixed $r\in [0,1)$  it is a riemannian metric on $L_{p_{i}}.$ Analogously, if $\overline{M}$ is manifold with boundary and $M$ is its interior part, then $g$ is a conic metric on $M$ if it is a smooth, symmetric section of $T^{*}\overline{M}\otimes  T^*\overline{M}$, degenerate over the boundary, such that over a collar neighborhood $U$ of $\partial \overline{M}$, $g$ satisfies \eqref{pianello} with respect to some diffeomorphism $\chi:U\rightarrow [0,1)\times \partial \overline{M}.$
\end{defi}

The next step is to recall the notion of differential  cone operator and its main properties.  Before to proceed we introduce some notations that we will use steadily through the paper. \\
Given an open manifold $M$ and two vector bundles $E,F$ over it, with $\dif^{n}(M,E,F), n\in \mathbb{N}$, we will label the space of differential operator $P:C^{\infty}_{c}(M,E)\rightarrow C^{\infty}_{c}(M,F)$ of order $n$. Given $\overline{M}$, a manifold with boundary, we will label  with $N$ the boundary of $\overline{M}$ and with $M$ the interior part of $\overline{M}$. Given a vector bundle $E$ over $\overline{M}$, with $E_{N}$ we mean the restriction of $E$ on $N$. Finally each metric $\rho$ over $E$ (riemannian if $E$ is real or hermitian if $E$ is complex) is assumed to be a non degenerate metric up to the boundary. The next definition is taken from \cite{MAL}:
\begin{defi}
\label{iconti}
Let $\overline{M}$ be a manifold with boundary $N=\partial \overline{M}$. Let $E,F$ be two vector bundles on $\overline{M}$. Let $\overline{U}_N$ be a collar neighborhood of $N$, $\overline{U}_N\cong [0,\epsilon)\times N$ and let $U_N=\overline{U}_N-N$.  A  differential cone operator of order $\mu\in \mathbb{N}$ and weight $\nu>0$
 is a differential operator $P:C^{\infty}_c(M,E)\rightarrow C^{\infty}_{c}(M,F)$ such that on $U_N$ it takes the form:
\begin{equation}
\label{genova}
P|_{U_{N}}=x^{-\nu}\sum_{i=0}^{\mu}A_{k}(-x\frac{\partial}{\partial x})^{k}
\end{equation}
  where $A_k\in C^{\infty}([0,\epsilon), \dif^{\mu-k}(N,E_{N},F_{N}))$ and $x$ is  the coordinate on $[0,\epsilon)$ .
As in \cite{MAL} we will label with $\dif_0^{\mu,\nu}(M,E,F)$ the space of differential cone operators between the bundles $E$ and $F$.
\end{defi}
Now we explain what we mean by {\em differential\ cone\ operator}  on a manifold $X$ with conical singularities. In the previous definition we recalled the notion of differential cone operator acting on the smooth sections with compact support of two vector bundles $E,F$ defined on a manifold $\overline{M}$ with boundary. In Proposition \ref{palcano}, given a manifold with conical singularities $X$, we stated the existence of a manifold with boundary $\overline{M}$ endowed with a blow down map $\beta:\overline{M}\rightarrow X$ which desingularizes $X$. Therefore given two vector bundles $E,F$ on $reg(X)$ and $P\in \dif(reg(X),E,F)$ we will say that $P$ is a differential cone operators if the following properties are satisfied:
\begin{enumerate}
\item $\beta^*(E),\beta^*(F)$ that are vector bundles on $M$, the interior of $\overline{M}$, extend as smooth vector bundles over the whole $\overline{M}$. In the same way, if $E$ and $F$ are endowed with metrics $\rho_1$ and $\rho_2$ then $\beta^*\rho_1$ and $\beta^*\rho_2$ extend as non degenerate metric up to the boundary of $\overline{M}.$
\item The  differential operator induced by $P$ through $\beta$ between $C^{\infty}_{c}(M,\beta^*E,\beta^*F)$ is a differential cone operator in the sense of Definition \ref{iconti}. 
\end{enumerate}
In the rest of the paper, with a slight abuse of notation, we will identify $M$ with $reg(X)$, $E$ with $\beta^*E$, $F$ with $\beta^*F$ and  $P$ with the operator that it induces through $\beta$  between $C^{\infty}_{c}(M,\beta^*E,\beta^*F)$. 

\begin{rem}
\label{brolio}
We can reformulate Definition \ref{iconti} in the following way: $P$ is differential cone operator of order $\mu$ and weight $\nu$ if and only if $x^{\nu}P$ is a $b-$differential operator of order $\mu$ in the sense of Melrose. For the definition of $b-$operator and the full development of this subject we refer to the monograph \cite{RBM}. Using this approach we have $\dif_0^{\mu,\nu}(M,E,F)=x^{-\nu}\dif_{b}^{\mu}(M,E,F)$. This last point of view is used for example in \cite{JIM} .
\end{rem}

Now we introduce the notion of ellipticity:
\begin{defi}
\label{londra}
 Let $\overline{M}$ be a manifold with boundary and let $E,F$ be two vector bundles over $\overline{M}$. Let  $P\in \dif_0^{\mu,\nu}(\overline{M},E,F)$ and let $\sigma^{\mu}(P)$ be  its principal symbol. Then $P$ is called elliptic if it is elliptic on $M$ in the usual sense and if 
\begin{equation}
\label{vilano}
x^{\nu}\sigma^{\mu}(P)(x,p,x^{-1}\tau, \xi)
\end{equation}
 is invertible for $(x,p)\in [0,\epsilon)\times N$ and $(\tau,\xi)\in T^*\overline{M}-\{0\}, \xi\in T_p^*(N)$.
\end{defi}
In the above definition there is implicit the natural identification of $T^*\overline{M}|_{[0,\epsilon)\times N}$ with $\mathbb{R}\times T^*N$.

\begin{defi}
\label{parigi}
Let $\overline{M}, E,F$ and $P$ be as in the previous definition. The conormal symbol of $P$, as defined in \cite{MAL},  is the family of differential operators, acting between $C^{\infty}(N,E_{N},F_{N})$, defined as
\begin{equation}
\label{cagli}
\sigma^{\mu,\nu}_{M}(P)(z):= \sum_{k=0}^{\mu}A_{k}(0)z^k
\end{equation}
\end{defi}
Now we make some further comments about the notion of ellipticity introduced in Definition \ref{londra}.
The requirement \eqref{vilano} in Definition \ref{londra} means that  $$\sum_{k=0}^{\mu} \sigma^{\mu-k}(A_{k}(x))((p,\xi))
\sigma^{k}((-x\frac{\partial}{\partial x})^k)(x,x^{-1}\tau)=\sum_{k=0}^{\mu}\sigma^{\mu-k}(A_{k}(x))((p,\xi))(-i\tau)^{k}$$ is invertible. On $M$ this is covered by classical ellipticity and for $x=0$ it is equivalent to require that \eqref{cagli} is a parameter dependent elliptic family of differential operators with parameters in $i\mathbb{R}$. \\Using again the $b$ framework of Melrose, Definition \ref{londra} is equivalent to say that the  $b-$principal symbol of $P':=x^{\nu}P$, that is $\sigma_b^{\mu}(P'):=\sigma^{\mu}(P')(x,p,x^{-1}\tau, \xi)$,  as an object  lying in\\ $C^{\infty}(T_{b}^*\overline{M},\omo(\pi_{b}^*E,\pi_{b}^*F))$, where $\pi_{b}:T_{b}^*\overline{M}\rightarrow \overline{M}$ is the $b-$cotangent bundle of $\overline{M}$, is an isomorphism on $T^*_{b}\overline{M}-\{0\}$. For further details on these approach see \cite{JIM} and the relative bibliography.\\Finally we remark that in Definition \ref{londra} we followed \cite{MAL} and \cite{JIM}. This is slightly different from those given, for example, in \cite{NSSS}, \cite{NS} and \cite{BWS}. The definition given in  these papers, in fact, requires the invertibility of the conormal symbol on a certain weight line (for more details see the above papers). By the fact that we are interested to study the operators on their natural domains, that is the maximal and the minimal one, we can waive this requirement (see \cite{MAL} pag. 13 for more comments about this). \vspace{ 1 cm}  

Finally we conclude this subsection stating an important proposition  on the theory of differential cone operators:
\begin{teo}
\label{gualdo}
Let $(\overline{M},g)$  be a compact and oriented manifold of dimension $m$ with boundary where $g$ is a conic metric over $M$; let $E,F$ be two hermitian vector bundles over $\overline{M}$ and let $P\in \dif_0^{\mu,\nu}(M,E,F)$ be an elliptic differential cone operator. 
\begin{enumerate}
\item Each closed extension  $\overline{P}:L^2(M,E)\rightarrow L^{2}(M,F)$ of $P$ is a Fredholm operator on its domain, $\mathcal{D}(\overline{P})$, endowed with the graph norm. 
\item Suppose that $E=F$ and that $P$ is   positive. Suppose, in addition, that on a collar neighborhood of $\partial \overline{M}$ the metric $\rho$ on $E$ does not depend on $r$ and the conic metric $g$ satisfies $g=dr^2+r^2h$ where $h$ is any riemannian metric over $\partial \overline{M}$ which does not depend on $r$.  Then, for each positive self-adjoint extension  $\overline{P}$ of $P$, the heat operator  $e^{-t\overline{P}}:L^2(M,E)\rightarrow L^2(M,E)$ is a trace-class operator. Moreover $\overline{P}$ is discrete and  the sequences of eigenvalues of $\overline{P}$ satisfies $\lambda_j \sim C j^{\frac{\mu}{m}}$.
\end{enumerate}
\end{teo}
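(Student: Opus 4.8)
For part (1) the plan is to reduce the statement to the Fredholm property of the two canonical extensions. By Remark~\ref{brolio}, $P':=x^{\nu}P$ is an elliptic $b$-differential operator of order $\mu$; ellipticity of $\sigma^{\mu}(P)$ together with invertibility of the rescaled symbol \eqref{vilano} is exactly what is needed to build, inside the small $b$-calculus (equivalently the cone calculus), a two-sided parametrix $Q$ for $P'$ whose remainders are $b$-smoothing and carry extra vanishing at $N$; on the relevant weighted $L^{2}$-space these remainders are compact, so $P_{min}$ and $P_{max}$ are Fredholm on their graph-norm domains. I would then invoke the structural fact that, because the conormal symbol \eqref{cagli} is a parameter-dependent elliptic family, the quotient $\mathcal{D}(P_{max})/\mathcal{D}(P_{min})$ is finite-dimensional --- it is spanned by the finitely many asymptotic terms $x^{-z}\log^{k}x$ attached to the indicial roots of $\sigma^{\mu,\nu}_{M}(P)$ in the critical strip. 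Hence for any closed extension $P_{min}\subseteq\overline{P}\subseteq P_{max}$ the domain $\mathcal{D}(\overline{P})$ is finite-codimensional in $\mathcal{D}(P_{max})$ and finite-dimensional over $\mathcal{D}(P_{min})$, and since the semi-Fredholm estimates survive such a finite change of domain, $\overline{P}$ is itself Fredholm on $(\mathcal{D}(\overline{P}),\|\cdot\|_{\overline{P}})$. This is precisely Prop.~1.3.16 of \cite{MAL}, see also Prop.~3.14 of \cite{JIM}.

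For part (2) the plan is to construct the heat kernel of $\overline{P}$ explicitly enough to read off its trace. Using the decomposition of Remark~\ref{peschiera}, split $M=reg(X)$ into a part $Y$, where $P$ is elliptic in the ordinary sense, and into truncated metric cones $C_{1}(N)$ over the links $N$. On $Y$ the usual Minakshisundaram--Pleijel interior parametrix controls the short-time behaviour away from $N$. On the model cone $(C_{1}(N),dr^{2}+r^{2}h)$, using the hypotheses that $h$ is independent of $r$ and that the metric $\rho$ on $E$ does not depend on $r$, separation of variables applies: expanding sections along an orthonormal basis of eigensections of the (elliptic, positive) tangential operator induced on $N$, the operator $\overline{P}$ becomes a direct sum of Bessel-type ordinary differential operators on $(0,1)$, parametrised by the square roots of the link eigenvalues, whose heat kernels are known in closed form in terms of modified Bessel functions, in the spirit of Cheeger's construction. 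The choice of positive self-adjoint extension affects only the boundary condition at $r=0$ of the finitely many limit-circle sectors, which do not influence the leading behaviour, while the $x$-dependence of the coefficients $A_{k}$ and the $r$-dependence of $h_{L}(r)$ away from $r=0$ are absorbed as a lower-order perturbation through a Duhamel/Volterra series. Gluing the interior parametrix with the model-cone kernel by a partition of unity and the same Duhamel correction produces a parametrix, hence the true heat kernel, for $e^{-t\overline{P}}$.

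With the heat kernel at hand I would estimate its restriction to the diagonal near the cone points. Since $dvol_{g}\approx r^{m-1}\,dr\,dvol_{h}$ there, and the diagonal of the model-cone heat kernel --- a sum over link eigensectors of diagonals of Bessel heat kernels --- is dominated by a function integrable against $r^{m-1}\,dr$, this estimate together with the interior bound shows that $e^{-t\overline{P}}$ is trace-class for every $t>0$. In particular $e^{-t\overline{P}}$ is compact, so $\overline{P}$ has purely discrete spectrum $0\le\lambda_{0}\le\lambda_{1}\le\cdots\to+\infty$ with finite multiplicities, and
\begin{equation}
\label{weylt}
\Tr(e^{-t\overline{P}})=\int_{M}\tr(k(t,x,x))\,dvol_{g}\sim c_{0}\,t^{-m/\mu}\qquad(t\to 0^{+})
\end{equation}
for a positive constant $c_{0}$, the interior furnishing the Weyl term $c_{0}\,t^{-m/\mu}$ and the cone points contributing a term of order at most $t^{-m/\mu}$. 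Since $\mu>0$, Karamata's Tauberian theorem turns \eqref{weylt} into $N(\lambda):=\#\{\,j:\lambda_{j}\le\lambda\,\}\sim c_{0}'\,\lambda^{m/\mu}$, which is equivalent to $\lambda_{j}\sim C\,j^{\mu/m}$.

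The main obstacle is the analysis at the cone tip in part (2): one has to build the heat kernel of the model cone operator and, above all, control the sum over the infinitely many link eigensectors with enough uniformity to guarantee both the convergence of the trace integral and the existence of the clean leading asymptotic in \eqref{weylt}, while simultaneously accommodating an arbitrary positive self-adjoint extension (which alters only the finitely many non-limit-point sectors) and the non-product lower-order terms of $P$. Once the model heat kernel and these uniform bounds are in place, the gluing, the trace-class conclusion and the Tauberian step are routine; full details can be found in \cite{MAL}.
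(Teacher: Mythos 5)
Your proposal is consistent with the paper, whose ``proof'' of this theorem consists entirely of citing \cite{MAL} Prop.~1.3.16 (or \cite{JIM} Prop.~3.14) for part (1) and \cite{MAL} Theorem~2.4.1 and Corollary~2.4.3 for part (2) --- exactly the results whose arguments (parametrix and finite-dimensionality of $\mathcal{D}(P_{max})/\mathcal{D}(P_{min})$ for Fredholmness; separation of variables with Bessel-type model kernels, trace estimates near the tip, and a Tauberian step for the eigenvalue asymptotics) your sketch summarizes. So you take essentially the same route, merely spelled out in more detail than the paper itself provides.
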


\begin{proof}
For the first statement see \cite{MAL} Prop. 1.3.16 or \cite{JIM} Prop. 3.14. For the second one see \cite{MAL} Theorem 2.4.1 and Corollary 2.4.3. 
\end{proof}

\subsection{Elliptic complex on manifolds with conical singularities}
The aim of this subsection is to define the notion of elliptic complex on a manifold with conical singularities. As for the notion of ellipticity, the definition of elliptic complex on a manifold with conical singularities was introduced in \cite{BWS}, pag. 205, but our definition is slightly different because we waive some requirements about the sequence of conormal symbols on a certain weight line. The reason is still given by the fact that we are interested on the minimal and maximal extension of a complex  differential cone operators.\\ Let $\overline{M}$ be a manifold with boundary, $E_{0},...,E_{n}$ a sequence of vector bundle over $\overline{M}$ and consider $P_{i}\in\dif^{\mu,\nu}_{0}(M,E_{i},E_{i+1})$  such that 
\begin{equation}
\label{pisa}
0\rightarrow C^{\infty}_{c}(M,E_{0})\stackrel{P_{0}}{\rightarrow}C^{\infty}_{c}(M,E_{1})\stackrel{P_{1}}{\rightarrow}...\stackrel{P_{n-1}}{\rightarrow}C^{\infty}_{c}(M,E_n)\stackrel{P_{n}}{\rightarrow}0
\end{equation} is  a complex.
We have the following definition:
\begin{defi}
\label{perugia}
The complex \eqref{pisa} is an elliptic complex if it is an elliptic complex in the usual sense on $M$ and if the sequence
\begin{equation}
\label{pistoia}
0\rightarrow \pi^*E_{0}\rightarrow \pi^*E_1\rightarrow...\rightarrow \pi^*E_n\rightarrow0
\end{equation}
 where the maps are given by $x^{\nu}\sigma^{\mu}(P_i)(x,p,x^{-1}\tau,\xi):\pi_i^*E_i\rightarrow\pi_{i+1}^*E_{i+1}$ is an exact sequence up to $x=0$ over $T^*\overline{M}-\{0\}$.
\end{defi}

With the help of Melrose's $b$ framework we can reformulate the previous definition in the following way:
\eqref{pisa} is an elliptic complex if and only if the following sequence is exact over $T^*_b(\overline{M})-\{0\}$:
\begin{equation}
\label{casalvecchio}
0\rightarrow \pi_{b}^*E_0\stackrel{\sigma_b^{\mu}(P'_{0})}{\rightarrow}\pi_{b}^*E_1\stackrel{\sigma_b^{\mu}(P'_{1})}{\rightarrow}...\stackrel{\sigma_b^{\mu}(P'_{n-1})}{\rightarrow}\pi_{b}^*E_n\stackrel{\sigma_b^{\mu}(P'_{n})}{\rightarrow}0
\end{equation} 
where $P'=x^{\nu}P$, that is the $b-$operator  naturally associated to $P$, $\pi_{b}:T_b^*\overline{M}\rightarrow \overline{M}$ is the $b-$cotangent bundle and $\sigma_b^{\mu}(P'_i)\in C^{\infty}(\overline{M},\omo(\pi_b^*E_i,\pi_b^*E_{i+1}))$ is the $b-$principal symbol of $P_i'$.\\We have the following  proposition:
\begin{prop}
\label{lucca}
Consider a complex of differential cone operators as in \eqref{pisa}. Suppose moreover that $M$ is endowed with a conic metric $g$. Then the complex  is an elliptic complex if and only if for each $i=0,...,n$ $$P^t_{i}\circ P_i+P_{i-1}\circ P_{i-1}^t:C^{\infty}_{c}(M,E_i)\rightarrow C^{\infty}_c(M,E_{i})$$ is an elliptic differential cone operator.
\end{prop}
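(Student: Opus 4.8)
The plan is to proceed in two stages. In the first stage I would show that, for every $i$ and without any ellipticity assumption, the operator $\mathcal{P}_i:=P_i^t\circ P_i+P_{i-1}\circ P_{i-1}^t$ belongs to $\dif_0^{2\mu,2\nu}(M,E_i,E_i)$. In the second stage I would show that the ellipticity of all the $\mathcal{P}_i$, in the sense of Definition \ref{londra}, is equivalent to the two exactness requirements of Definition \ref{perugia} for the complex \eqref{pisa}. The second stage will rest, in both the interior and the boundary part, on the elementary linear-algebra fact that a finite cochain complex $0\to V_0\stackrel{a_0}{\rightarrow}V_1\stackrel{a_1}{\rightarrow}\cdots\stackrel{a_{n-1}}{\rightarrow}V_n\to0$ of finite-dimensional Hermitian spaces is exact if and only if, for each $i$, the combinatorial Laplacian $a_i^{\ast}a_i+a_{i-1}a_{i-1}^{\ast}\colon V_i\to V_i$ is invertible.

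For the first stage, the key point is that the formal adjoint computed with respect to a conic metric does not leave the class $\dif_0^{\mu,\nu}$, i.e. $P_i^t\in\dif_0^{\mu,\nu}(M,E_{i+1},E_i)$. Writing $P_i=x^{-\nu}Q_i$ with $Q_i$ a $b$-differential operator of order $\mu$ (Remark \ref{brolio}) and observing that near the boundary, with $x=r$ the boundary defining function and $m=\dim M$, the conic volume form factors as $dvol_g=r^m\mu_b$ for a smooth positive $b$-density $\mu_b$ (this uses $g=dr^2+r^2h(r)$), one combines three standard facts of the $b$-calculus, see \cite{RBM}: the adjoint with respect to $w\cdot\mu_b$ equals $w^{-1}\circ(\,\cdot\,)^{\ast,\mu_b}\circ w$; the $\mu_b$-adjoint of a $b$-operator is again a $b$-operator; and conjugation by a power of $r$ preserves $b$-operators (because $r^{-a}(-r\partial_r)r^a=-r\partial_r-a$). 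Bookkeeping the powers of $r$ then yields $P_i^t=r^{-\nu}\cdot(\text{a $b$-operator of order }\mu)$, and since the bundle metrics $\rho_i$ are non-degenerate up to the boundary the principal symbol $\sigma^\mu(P_i^t)=\sigma^\mu(P_i)^{\ast}$ (pointwise adjoint with respect to the $\rho_i$) is well defined down to $x=0$. Composing and summing, $P_i^tP_i$ and $P_{i-1}P_{i-1}^t$ lie in $\dif_0^{2\mu,2\nu}$, hence so does $\mathcal{P}_i$. I expect this bookkeeping to be the only genuine technical obstacle, and it is exactly here that the hypotheses ``$g$ is a conic metric'' and ``$\rho_i$ non-degenerate up to the boundary'' are used.

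For the second stage, I would apply the linear-algebra fact twice. First, fibrewise to the ordinary principal-symbol complex of \eqref{pisa} over $M$: its $i$-th combinatorial Laplacian is $\sigma^\mu(P_i)^{\ast}\sigma^\mu(P_i)+\sigma^\mu(P_{i-1})\sigma^\mu(P_{i-1})^{\ast}=\sigma^{2\mu}(\mathcal{P}_i)$, so \eqref{pisa} is an elliptic complex on $M$ in the usual sense if and only if each $\mathcal{P}_i$ is an elliptic operator on $M$ in the usual sense. Second, to the rescaled symbol maps $a_i(x,p,\tau,\xi):=x^\nu\sigma^\mu(P_i)(x,p,x^{-1}\tau,\xi)$, which are defined and continuous up to $x=0$ and still form a complex since $P_{i+1}P_i=0$ forces $a_{i+1}a_i=0$: a direct computation using $\sigma^\mu(P_i^t)=\sigma^\mu(P_i)^{\ast}$ gives $a_i^{\ast}a_i+a_{i-1}a_{i-1}^{\ast}=x^{2\nu}\sigma^{2\mu}(\mathcal{P}_i)(x,p,x^{-1}\tau,\xi)$, so the sequence \eqref{pistoia} is exact up to $x=0$ over $T^{\ast}\overline{M}-\{0\}$ if and only if $x^{2\nu}\sigma^{2\mu}(\mathcal{P}_i)(x,p,x^{-1}\tau,\xi)$ is invertible up to $x=0$ for every $i$. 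Combining the interior and the boundary statements identifies Definition \ref{perugia} for \eqref{pisa} with Definition \ref{londra} for every $\mathcal{P}_i$, which is the assertion.
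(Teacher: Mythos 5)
Your proposal is correct and follows essentially the same route as the paper: show that the formal adjoints (hence the operators $\mathcal{P}_i$) stay in the cone/$b$-calculus, use multiplicativity of the ($b$-)principal symbol together with $\sigma(P^t)=\sigma(P)^{\ast}$, and conclude with the standard finite-dimensional fact that a symbol complex is exact if and only if its combinatorial Laplacians are invertible, exactly as in the closed case. The only difference is one of detail: you spell out the $b$-density and conjugation bookkeeping showing $P_i^t\in\dif_0^{\mu,\nu}$ and phrase the boundary condition via the rescaled symbols $x^{\nu}\sigma^{\mu}(P_i)(x,p,x^{-1}\tau,\xi)$ rather than via $\sigma_b^{\mu}(P_i')$, which the paper treats as clear and as equivalent formulations.
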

\begin{proof}
It is clear that if $P\in \dif_0^{\mu,\nu}(M,E_{i},E_{i+1})$ then also $P^t\in \dif_0^{\mu,\nu}(M,E_{i+1},E_i)$ where $P_t:C^{\infty}_{c}(M,E_{i+1})\rightarrow C^{\infty}_c(M,E_i)$ is the formal adjoint of $P$. Now, as in the previous comment, let $P'_i=x^{\nu}P$ be the $b-$operator that is naturally associated to $P$. It is well known that $\sigma_b^{\mu}(P'_{i+1}\circ P_i')=\sigma_b^{\mu}(P'_{i+1})\circ \sigma_b^{\mu}(P'_i)$ and that $\sigma_b^{\mu}((P'_i)^t)=(\sigma_b^{\mu}(P'_i))^t$. The proof follows now by standard arguments of linear algebra, in complete analogy with the case of an elliptic complex on a closed manifold.
\end{proof}
From the above proposition it follows the following useful corollary:
\begin{cor}
\label{icordelli}
In the same hypothesis of the previous proposition. The Hilbert complexes $(L^{2}(M,E_{*}),P_{max/min,*})$ are both Fredholm complexes. Moreover each Hilbert complex that extends $(L^{2}(M,E_{*}),P_{min,*})$ and that is extended by $(L^{2}(M,E_{*}),P_{max,*})$ is still an Fredholm complex.
\end{cor}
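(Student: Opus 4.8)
The plan is to deduce the statement from the Fredholm theory of a single elliptic cone operator, namely $\mathcal{P}_i:=P_i^t\circ P_i+P_{i-1}\circ P_{i-1}^t$, by passing through the abstract criterion of Proposition \ref{fonteavellana}. The point is that both $P_{max,*}$, $P_{min,*}$ and every intermediate extension carry \emph{the same} underlying differential operator $\mathcal{P}_i$ at the level of $C^\infty_c$, so it suffices to treat an arbitrary Hilbert complex $(L^2(M,E_*),D_*)$ with $P_{min,i}\subseteq D_i\subseteq P_{max,i}$ once and for all.

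First I would invoke Proposition \ref{lucca}: under the running hypotheses (a complex of differential cone operators over $M$, with $M$ equipped with a conic metric $g$, elliptic in the sense of Definition \ref{perugia}), the operator $\mathcal{P}_i:C^\infty_c(M,E_i)\to C^\infty_c(M,E_i)$ is an elliptic differential cone operator for every $i=0,\dots,n$. This is the only step that really uses the hypotheses of the corollary.

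Next, fix a Hilbert complex $(L^2(M,E_*),D_*)$ with $P_{min,i}\subseteq D_i\subseteq P_{max,i}$ for all $i$; this class contains $D_*=P_{max,*}$ and $D_*=P_{min,*}$, so handling it settles both assertions at once. Consider the associated Laplacian $\Delta_i=D_i^*\circ D_i+D_{i-1}\circ D_{i-1}^*$ with domain \eqref{saed}. Since $C^\infty_c(M,E_i)$ lies in the domain of every closed $L^2$-extension of $P_i$ and of $P_{i-1}^t$, and using $(P_{max,j})^*=P_{min,j}^t$ and $(P_{min,j})^*=P_{max,j}^t$, one checks directly that $C^\infty_c(M,E_i)\subseteq\mathcal{D}(\Delta_i)$ and that on it $\Delta_i=P_i^t P_i+P_{i-1}P_{i-1}^t=\mathcal{P}_i$; moreover $\Delta_i$ is self-adjoint, hence closed. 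Therefore $\Delta_i$ is a closed extension of the elliptic cone operator $\mathcal{P}_i$, and the first part of Theorem \ref{gualdo} gives that $\Delta_i$ is a Fredholm operator on $\mathcal{D}(\Delta_i)$ endowed with the graph norm.

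Finally, Proposition \ref{fonteavellana} applied to $(L^2(M,E_*),D_*)$ yields that this Hilbert complex is Fredholm; specializing to $D_*=P_{max,*}$ and $D_*=P_{min,*}$ proves the first assertion, while the arbitrary intermediate $D_*$ gives the ``moreover'' part (and by Proposition \ref{fred} the dual complexes are Fredholm as well). The only point I expect to need some care is the verification that $\Delta_i$ is genuinely a closed extension of the differential operator $\mathcal{P}_i$ on $C^\infty_c(M,E_i)$ — that is, the bookkeeping of the four domain conditions in \eqref{saed} together with the adjoint identities above — but this is routine, and everything else is a direct citation of the quoted results.
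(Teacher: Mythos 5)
Your proposal is correct and follows essentially the same route as the paper: ellipticity of $\mathcal{P}_i$ via Proposition \ref{lucca}, Fredholmness of its closed extensions via the first part of Theorem \ref{gualdo}, and then the abstract criterion of Proposition \ref{fonteavellana}. The only difference is that you spell out explicitly that the Laplacian of any intermediate complex $P_{min,*}\subseteq D_*\subseteq P_{max,*}$ is a closed extension of $\mathcal{P}_i$, which makes the ``moreover'' part explicit where the paper's two-line proof only names the absolute and relative Laplacians.
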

\begin{proof}
From Theorem \ref{gualdo} it follows that $P_{min,i}^t\circ P_{max,i}+P_{max,i-1}\circ P_{min,i-1}^t$ and $P_{max,i}^t\circ P_{min,i}+P_{min,i-1}\circ P_{max,i-1}^t$ are both Fredholm operators on their natural domain endowed with the graph norm. Now the statement follows  from  Prop. \ref{fonteavellana} 
\end{proof}

We remark the fact that we gave the definition of an elliptic complex of differential cone operators on a manifold with boundary $\overline{M}$. Following the remark after Definition \ref{iconti} the notion of elliptic complex of differential cone operators is naturally extended on a manifold $X$ with conical singularities.

\subsection{A brief reminder on the heat kernel}
The aim of this subsection is to recall briefly the main local  properties of the heat kernel on an open and oriented riemannian manifold $(M,g)$. \\Let $(M,g)$ be an open and oriented riemannian manifold, $E$ a vector bundle over $M$, $P_0:C_{c}^{\infty}(M,E)\rightarrow C^{\infty}_{c}(M,E)$ a non-negative symmetric differential operator and $P:\mathcal{D}(P)\subset L^{2}(M,E)\rightarrow L^{2}(M,E)$ a non-negative, self-adjoint extension of $P_0$. It is well know that, using the spectral theorem for unbounded self-adjoint operators and its associated functional calculus (see \cite{DUS}, chap. XXII), it is possible to construct the operator  $e^{-tP}$. The next result we are going to recall summarizes the main local properties of $e^{-tP}$ that we will use in the rest of the paper. We start with the following definitions:

\begin{defi}
A cut-off function is a smooth function $\eta:[0,\infty)\rightarrow [0,1]$ which admits  a $\epsilon>0$ such that $\eta(x)=1$ for $x\leq \frac{\epsilon}{4}$ and $\eta=0$ for $x\geq \epsilon$.
\end{defi}

\begin{defi}
\label{tranquillo}
Let $(M,g)$ be an open manifold, $E$ a vector bundle over $M$ and $P_0:C^{\infty}_{c}(M,E)\rightarrow C^{\infty}_{c}(M,E)$ a differential operator of second order. Then $P_0$ is a generalized Laplacian if   its principal symbol satisfies: $$\sigma^{2}(P_0)(x,\xi)=\|\xi\|^{2}.$$
\end{defi} 
An operator of this type is clearly elliptic. We refer to \cite{BGV} for a comprehensive  discussion on this class of operators.
\begin{teo}
\label{casale}
Let $(M,g)$ be an open and oriented riemannian manifold, $E$ a vector bundle over $M$, $P_0:C_{c}^{\infty}(M,E)\rightarrow C^{\infty}_{c}(M,E)$ a non-negative symmetric differential operator of order $d$ and $P:\mathcal{D}(P)\subset L^{2}(M,E)\rightarrow L^{2}(M,E)$ a non-negative, self-adjoint extension of $P$. Then $e^{-tP}$ satisfies the following properties:
\begin{itemize}
\item $e^{-tP}$ has a $C^{\infty}-$kernel, that is usually  labeled $e^{-tP}(s,q)$ or $k_{P}(t,s,q)$, which lies in $C^{\infty}((0,\infty)\times M\times M, E\boxtimes E^*).$
\item If $K_1,K_2$ are compact subset of $M$ such that $K_{1}\cap K_{2}=\emptyset$ then $$\|k_{P}(t,s,q)\|_{C^{k}(K_1\times K_{2},E\boxtimes E^*)}=O(t^n),\ t\rightarrow 0$$for all $k,n\in \mathbb{N}$.
\item
Let $\phi, \chi\in C^{\infty}_{c}(M)$; then the operator $\phi e^{-tP}\chi$ is a trace-class operator and we have, on   $C^l(K_1\times K_2,E\boxtimes E^*|_{K_1\times K_2})$ for each $l\in \mathbb{N}$, $$(\phi e^{-t P}\chi)(q,q)\sim_{t\rightarrow 0}\sum_{n=0}^{\infty}\phi(q)\chi(q)\Phi_{n}(q)t^{\frac{n-m}{d}}$$ and $$\Tr((\phi e^{-t P}\chi)(q,q))\sim_{t\rightarrow 0}\sum_{n=0}^{\infty}(\int_{M}\phi(q)\chi(q)\tr(\Phi(q))dvol_{g})t^{\frac{n-m}{d}}$$ where $q\in M$, $\{\Phi_{1},...,\Phi_{n},...,\}$ is a suitable sequence of sections in $C^{\infty}(M,End(E))$, $K_1=supp(\phi)$ and $K_2=supp(\chi)$.
\end{itemize}
Finally if $P_{0}$ is a generalized Laplacian then the last  property above can be sharpened  in the following way:
\begin{itemize}
\item 
Let $\phi, \chi\in C^{\infty}_{c}(M)$; then the operator $\phi e^{-tP}\chi$ is a trace-class operator and we have $$\phi(s)e^{-tP}(s,q)\chi(q)\sim_{t\rightarrow 0}h_{t}(s,q)\sum_{n=0}^{\infty}\phi(s)\chi(q)\Phi_{n}(s,q)t^{n}$$  where $(s,q)\in M\times M$, $\{\Phi_{1},...,\Phi_{n},...,\}$ is a suitable sequence of sections in $C^{\infty}(M\times M,E\boxtimes E^*)$ and $h_{t}(s,q)=(4\pi t)^{\frac{-n}{2}}e^\frac{-d(s,q)^2}{4t}\eta(d(s,q)^2)$ with $\eta$ a cut-off function. As in the previous case the above expansion holds in $C^l(K_1\times K_2,E\boxtimes E^*|_{K_1\times K_2})$ for each $l\in \mathbb{N}$, where $K_1=supp(\phi)$ and $K_2=supp(\chi)$.
\end{itemize}
\end{teo}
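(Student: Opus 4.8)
The plan is to exploit the principle that all short-time and regularity properties of $e^{-tP}$ are \emph{local}: they depend only on the structure of $P_{0}$ in a neighbourhood of the diagonal of $M\times M$, not on the particular non-negative self-adjoint extension $P$. Concretely I would reduce every assertion to the corresponding \emph{classical} fact for an elliptic operator on a closed manifold by a comparison argument, controlling the error through a Duhamel iteration (and, in the generalized-Laplacian case, through finite propagation speed).

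First, the smoothness of the kernel. By the spectral theorem, for every $k\in\mathbb{N}$ the operator $P^{k}e^{-tP}$ is bounded on $L^{2}(M,E)$ with norm at most $\sup_{\lambda\ge 0}\lambda^{k}e^{-t\lambda}<\infty$, so $e^{-tP}$ maps $L^{2}(M,E)$ into $\bigcap_{k}\mathcal{D}(P^{k})$. Since $P_{0}$ is elliptic of order $d$ (a tacit hypothesis here, met in all the applications, where $P_{0}$ is one of the Laplacians $\mathcal{P}_{i}$) and $P$ agrees with $P_{0}$ on $C^{\infty}_{c}(M,E)$, local elliptic regularity gives $\mathcal{D}(P^{k})\subset H^{kd}_{\mathrm{loc}}(M,E)$, and Sobolev embedding yields $e^{-tP}(L^{2})\subset C^{\infty}_{\mathrm{loc}}(M,E)$; as $e^{-tP}$ is self-adjoint the same smoothing holds for its transpose, so the Schwartz kernel is smooth in both spatial variables, while $\partial_{t}^{j}e^{-tP}=(-P)^{j}e^{-tP}$ handles the dependence on $t$, whence joint smoothness on $(0,\infty)\times M\times M$.

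Second, and this is the crux, the locality and off-diagonal decay. Fix a point of $M$ together with a small geodesic ball; embed a neighbourhood of it into a closed manifold $N$ and extend $P_{0}$ there to a non-negative self-adjoint elliptic operator $\hat{P}$ of order $d$ coinciding with $P_{0}$ on the neighbourhood. For a cutoff $\chi$ supported in the neighbourhood, Duhamel's formula gives
\begin{equation}
\chi e^{-tP}-e^{-t\hat{P}}\chi=\int_{0}^{t}e^{-(t-s)\hat{P}}\,[P_{0},\chi]\,e^{-sP}\,ds ,
\end{equation}
where $[P_{0},\chi]$ is supported on $\mathrm{supp}\,d\chi$; iterating this identity with nested cutoffs and invoking the smoothing bounds of the previous step together with the (classical) off-diagonal decay of $e^{-t\hat{P}}$ on $N$, one obtains $\phi\,e^{-tP}\,\psi=\phi\,e^{-t\hat{P}}\,\psi+O(t^{\infty})$ in every $C^{k}$-norm whenever $\phi,\psi$ have supports near the chosen point. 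Covering $K_{1}$ and $K_{2}$ by such neighbourhoods and taking $\phi\equiv1$ near $K_{1}$, $\psi\equiv1$ near $K_{2}$ with disjoint supports reduces the off-diagonal estimate to that for the closed-manifold heat kernel, giving the second bullet. When $P_{0}$ is a generalized Laplacian one may argue directly from the subordination formula $e^{-tP}=(4\pi t)^{-1/2}\int_{\mathbb{R}}e^{-s^{2}/4t}\cos(s\sqrt{P})\,ds$ and unit finite propagation speed of $\cos(s\sqrt{P})$ (valid for any self-adjoint extension), which yields $\|\phi e^{-tP}\psi\|=O(e^{-\delta^{2}/8t})$ when $d(K_{1},K_{2})=\delta$, upgraded to $C^{k}$ by ellipticity. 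I expect the real work to sit precisely here: checking that the Duhamel error is genuinely $O(t^{\infty})$ \emph{uniformly in all $C^{k}$-norms}, and, for $d>2$, coping with the fact that finite propagation speed is unavailable so that the comparison route is the only option.

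Finally, the trace-class property and the asymptotic expansions. For $\phi,\chi\in C^{\infty}_{c}(M)$ the kernel $\phi(s)k_{P}(t,s,q)\chi(q)$ is smooth and compactly supported, so $\phi e^{-tP}\chi$ is a smoothing operator with compactly supported kernel, hence trace-class, with $\Tr(\phi e^{-tP}\chi)=\int_{M}\phi(q)\chi(q)\tr k_{P}(t,q,q)\,dvol_{g}$. By the locality established above, the near-diagonal asymptotics of $k_{P}$ agree, up to $O(t^{\infty})$, with those of $k_{\hat{P}}$; the latter are given by the classical Seeley--Greiner parametrix expansion for elliptic operators of order $d$, producing the stated $t^{(n-m)/d}$-expansion on the diagonal and, after integration, for the trace. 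When $P_{0}$ is a generalized Laplacian, the sharper statement with the Gaussian prefactor $h_{t}(s,q)$ is the Minakshisundaram--Pleijel expansion, transported from the closed model in the same way. This yields all four bullets.
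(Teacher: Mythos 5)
Your proposal is correct in substance, but it is worth saying how it relates to what the paper actually does: the paper's proof of Theorem \ref{casale} is essentially a citation, referring to \cite{MAL} Theorem 1.1.18 for the first three bullets (with the remark that the closed-manifold proofs in \cite{PBG} localize to open manifolds) and to \cite{BGV} Prop.\ 2.46 or \cite{JR} Theorem 7.15 for the generalized-Laplacian refinement. What you have written out is precisely the localization mechanism that those references rest on: spectral-theorem smoothing plus local elliptic regularity for joint smoothness of the kernel, a Duhamel comparison with an elliptic extension $\hat{P}$ on a closed manifold to transfer the off-diagonal $O(t^{\infty})$ bounds and the near-diagonal parametrix expansions, and the observation that a smooth compactly supported kernel gives a trace-class operator whose trace is the integral of the kernel on the diagonal. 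So the route is not genuinely different from the paper's; rather you supply the argument the paper outsources, and you also correctly make explicit the tacit ellipticity hypothesis on $P_{0}$ (without which the first bullet is false, and which holds in all the paper's applications since $P_{0}$ is one of the operators $\mathcal{P}_{i}$). Two caveats. First, your parenthetical claim that finite propagation speed for $\cos(s\sqrt{P})$ is ``valid for any self-adjoint extension'' is not safe: for a nonlocal self-adjoint extension on an incomplete manifold the wave group need not respect the intrinsic distance of $M$ (extensions may couple distant parts of the metric boundary), so the bound $O(e^{-\delta^{2}/8t})$ with $\delta=d(K_{1},K_{2})$ can fail; since your Duhamel comparison already covers the case $d=2$, this aside should either be dropped or restricted to extensions known to be local. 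Second, in the Duhamel step the single identity controls the error only in the variable carrying the cutoff, so the claim of $O(t^{\infty})$ in all $C^{k}$-norms in both variables does require the iteration with nested cutoffs (or a symmetric/adjoint argument) that you indicate; you flag this honestly, and it is exactly the point where the ``careful examination'' invoked by the paper takes place.
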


\begin{proof}
For the first three properties we refer to \cite{MAL}, Theorem 1.1.18. As explained there these properties are proved globally, for example in \cite{PBG}, when $M$ is a closed manifold. A careful examination of those proofs shows that the same properties remain true locally when $M$ is an open manifold. The same argumentation applies to the last property which is proved globally, on a closed manifold, in \cite{BGV} Prop. 2.46 or in \cite{JR} Theorem 7.15.
\end{proof}
The rest of the subsection is a brief reminder about the heat kernel of a differential cone operator. For more details and for the proof we refer to \cite{MAL}. As already recalled in Theorem \ref{gualdo} we know that, if $\overline{M}$ is a compact and oriented manifold with boundary, $M$ its interior part,  $P_0\in \dif_0(M,E;E)$ is a positive operator and $g$ is a conic metric over $M$, then for each positive self-adjoint extension $P$ of $P_0$, $e^{-tP}:L^2(M,g)\rightarrow L^2(M,g)$ is a trace-class operator.  Now we want to recall an important property named \textbf{scaling property}. Before doing this we need to introduce some notations:\\
Let $N$ be a compact manifold; consider $C(N)$ and endow it with a product metric $g=dr^2+h$ where $h$ is a riemannian metric over $N$. Finally let $E$ be a vector bundle over $reg(C(N))$.\\ Define $U_t:L^2(reg(C(N)),E)\rightarrow L^2(reg(C(N)),E)$ as $s(r,p)\mapsto t^{\frac{1}{2}}s(tr,p).$ It is immediate to show that $U_t:L^2(reg(C(N)),E)\rightarrow L^2(reg(C(N)),E)$ is an isometry and that $U_{t_1}\circ U_{t_2}=U_{t_1t_2}.$
\begin{prop}
\label{dodoma}
Let $N$ be a compact manifold, $E$ a vector bundle over $reg(C(N))$,  let $P_0\in \dif_0^{\mu,\nu}(reg(C(N)),E,E)$ be a symmetric differential cone operator and let $P$ be a self-adjoint extension of $P_0$. Endow $reg(C(N))$ with a product metric $g$, that is $g=dr^2+h$ where $h$ is a riemannian metric over $N$. Finally let $P_t=t^{\nu}U_tPU_t^*$ and let $f:\mathbb{R}\rightarrow \mathbb{R}$ a function such that $f(P)$ has a measurable kernel. Then for each $\lambda>0$
\begin{equation}
\label{atene}
f(P)(r, p, s, q) =\frac{1}{\lambda}f(\lambda^{-\nu}P_{\lambda})(\frac{r}{\lambda},p,\frac{s}{\lambda},q),\ \lambda>0
\end{equation}
As  particular case, given $P_0\in \dif_0^{\mu,\nu}(reg(C(N)),E,E)$ positive and $P$ a positive self-adjoint extension then 
\begin{equation}
\label{belgrado}
e^{-tP}(r,p,r,q)=\frac{1}{r}e^{-tr^{-\nu}P_r}(1,p,1,q)
\end{equation}
\end{prop}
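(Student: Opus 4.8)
The plan is to reduce the whole statement to two elementary facts: that the dilation $U_\lambda$ is a unitary operator on $L^2(reg(C(N)),E)$ whose conjugation acts on Schwartz kernels by an explicit affine change of variables, and that functional calculus commutes with conjugation by a unitary. First I would record that, since $g=dr^2+h$ is a \emph{product} metric, the Riemannian volume form is $dr\,dvol_h$; the substitution $u=\lambda r$ then gives $\|U_\lambda s\|^2=\|s\|^2$, and combined with $U_{\lambda_1}\circ U_{\lambda_2}=U_{\lambda_1\lambda_2}$ this shows $U_\lambda$ is unitary with $U_\lambda^*=U_{1/\lambda}$, i.e. $(U_\lambda^* f)(s,q)=\lambda^{-1/2}f(s/\lambda,q)$. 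It is exactly the product-metric hypothesis that makes the particular normalization $s\mapsto t^{1/2}s(tr,\cdot)$ an isometry.

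Next I would compute the kernel of $U_\lambda A U_\lambda^*$ for any operator $A$ with measurable kernel $A(r,p,s,q)$, using the convention $(As)(r,p)=\int_0^\infty\int_N A(r,p,s,q)s(s,q)\,dvol_h(q)\,ds$ attached to that volume form. Unwinding the definitions and substituting $s=\lambda u$ yields
$$(U_\lambda A U_\lambda^* f)(r,p)=\int_0^\infty\int_N \lambda\,A(\lambda r,p,\lambda u,q)\,f(u,q)\,dvol_h(q)\,du,$$
so $U_\lambda A U_\lambda^*$ has kernel $\lambda A(\lambda r,p,\lambda u,q)$, or equivalently $A(r,p,s,q)=\tfrac{1}{\lambda}(U_\lambda A U_\lambda^*)(r/\lambda,p,s/\lambda,q)$. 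The measurability (and, when present, continuity) of the kernel is obviously preserved under this affine change of variables.

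Now I would apply this with $A=f(P)$. By definition $\lambda^{-\nu}P_\lambda=\lambda^{-\nu}\lambda^{\nu}U_\lambda P U_\lambda^*=U_\lambda P U_\lambda^*$, which is self-adjoint because $U_\lambda$ is unitary and $P$ is self-adjoint; uniqueness of the spectral measure then gives $f(U_\lambda P U_\lambda^*)=U_\lambda f(P)U_\lambda^*$, hence $U_\lambda f(P)U_\lambda^*=f(\lambda^{-\nu}P_\lambda)$. Substituting this into the kernel identity of the previous paragraph gives precisely \eqref{atene}. Taking $f(x)=e^{-tx}$ and then specializing to $\lambda=r$ together with $s=r$, so that $r/\lambda=s/\lambda=1$, produces \eqref{belgrado}; the trace-class property of $e^{-tP}$, needed here only to know its kernel exists and is smooth, is already supplied by Theorem \ref{gualdo}.

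The computations are all routine, and there is no real analytic obstacle; the only points demanding care are keeping track of the volume form $dr\,dvol_h$ through the substitution (which is where the product metric is genuinely used) and making sure the kernel convention used for $f(P)$ is the one compatible with that volume form, so that the power of $\lambda$ coming from the Jacobian matches the $\lambda^{1/2}$-normalizations in $U_\lambda$ and $U_\lambda^*$. The identity $f(U_\lambda P U_\lambda^*)=U_\lambda f(P)U_\lambda^*$ is a standard spectral-theorem argument and requires no separate work.
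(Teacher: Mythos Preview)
Your argument is correct and is essentially the standard proof: you verify that $U_\lambda$ is unitary for the product measure $dr\,dvol_h$, compute how conjugation by $U_\lambda$ transforms Schwartz kernels, and then use the spectral identity $f(U_\lambda P U_\lambda^*)=U_\lambda f(P)U_\lambda^*$ together with $\lambda^{-\nu}P_\lambda=U_\lambda P U_\lambda^*$. The paper itself does not supply a proof but defers to \cite{MAL}, Lemma~2.2.3, where exactly this computation is carried out; so your approach coincides with the one the paper points to. One minor remark: your appeal to Theorem~\ref{gualdo} for the existence of a smooth kernel of $e^{-tP}$ is slightly off, since that theorem is stated for compact $\overline{M}$ while here we are on the open cone; the first point of Theorem~\ref{casale} is the correct reference (and in any case the general statement already assumes $f(P)$ has a measurable kernel).
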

\begin{proof}
See \cite{MAL} Lemma 2.2.3.
\end{proof}
Now we modify the above proposition for the heat operator in the case that $g$ is a conic metric over $M$.  As we will see, we are interested to the study of the $L^2-$Lefschetz numbers where the $L^2$ space are built using a conic metric. The reason is that when the considered  complex is the $L^2$ de Rham complex (built using a conic metric) then its $L^2-$cohomology has a topological meaning. More precisely, as showed by Cheeger in \cite{JEC}, we have the following theorem:
\begin{teo}
\label{ragusa}
Let $(F,h)$ be a compact and oriented riemannian manifold of dimension $f$. Consider the cone $C_b(F)$ with  $b$ a positive real number and endow $C_b(F)$ with the conic metric $g=dr^2+r^2h$. Then
\begin{equation}
\label{stia}
H^i_{2,max}(C_b(F),g)\cong
\left\{
\begin{array}{ll}
H^i(F) & i<\frac{f}{2}+\frac{1}{2}\\
0 & i\geq \frac{f}{2}+\frac{1}{2}
\end{array}
\right.
\end{equation}
If $X$ is a compact and oriented manifold with conical singularities and if $g$ is a conic metric over $reg(X)$ then 
\begin{equation}
\label{aosta}
H^i_{2,max}(reg(X),g)\cong I^{\underline{m}}H^i(X),\ H^i_{2,min}(reg(X),g)\cong I^{\overline{m}}H^i(X).
\end{equation}
\end{teo}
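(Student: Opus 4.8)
This is Cheeger's theorem, established in \cite{JEC}; here we only outline the strategy one would follow, referring to \cite{JEC} for the complete argument.

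\textbf{The cone formula \eqref{stia}.} First one analyses the open metric cone $(C_b(F),g)$ with $g=dr^2+r^2h$ and $f=\dim F$. On $reg(C_b(F))=(0,b)\times F$ every differential $i$-form decomposes uniquely as $\omega=\alpha(r)+dr\wedge\beta(r)$, with $\alpha(r)$ a path of $i$-forms and $\beta(r)$ a path of $(i-1)$-forms on $F$, and a pointwise computation gives
$$\|\omega\|_{L^2}^2=\int_0^b\int_F\left(r^{f-2i}|\alpha(r)|_h^2+r^{f-2i+2}|\beta(r)|_h^2\right)dvol_h\,dr.$$
Since $d\omega=d_F\alpha+dr\wedge(\partial_r\alpha-d_F\beta)$, the $L^2$ de Rham complex is compatible with the spectral decomposition of the Hodge Laplacian $\Delta_F$ of $(F,h)$ and splits, as a Hilbert complex, into the orthogonal sum of: (a) for each positive eigenvalue of $\Delta_F$, a finite rank model complex in the single variable $r$ whose differentials are first order ordinary differential operators, which one verifies to be $L^2$-acyclic by explicitly solving the associated ODEs; and (b) for the kernel of $\Delta_F$, i.e.\ the harmonic forms of $F$, for each harmonic $j$-form a two-term complex $L^2((0,b),r^{f-2j}dr)\stackrel{d/dr}{\rightarrow}L^2((0,b),r^{f-2j}dr)$ placed in degrees $j$ and $j+1$. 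Solving these ODEs one finds that the cohomology of such a block is carried entirely by the constant harmonic form, which is square integrable near the vertex precisely when $\int_0 r^{f-2j}dr<\infty$, that is when $j<\tfrac{f}{2}+\tfrac{1}{2}$, and that nothing survives in the complementary range; in the course of the computation one also checks that all the ranges are closed, so that the cohomology is finite dimensional. This gives \eqref{stia} for the maximal extension. For the minimal one the same computation applies with the minimal ideal boundary condition at the vertex, or one deduces it from \eqref{stia} by the $L^2$ Poincar\'e duality furnished by the Hodge star, which identifies the Hilbert complex adjoint of $(L^2\Omega^*,d_{max})$ on $C_b(F)$ with the degree-reversed complex $(L^2\Omega^*,d_{min})$; the resulting truncation is the one attached to the upper middle perversity.

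\textbf{From the cone to $X$ and \eqref{aosta}.} For a compact $X$ with a single conical point $p$ (the general case being analogous) one uses the decomposition $X\cong\overline{Y}\cup_{L_p}\overline{C_1(L_p)}$ of Remark \ref{peschiera}. By Corollary \ref{icordelli} the de Rham complex, being an elliptic cone complex, produces Fredholm Hilbert complexes $(L^2\Omega^*(reg(X),g),d_{max/min})$, so all the ranges are closed and one has a Mayer--Vietoris exact sequence in $L^2$-cohomology for the covering of $reg(X)$ by a neighbourhood of the cone and by the interior of $\overline{Y}$: the cone piece contributes the truncated link cohomology \eqref{stia}, the compact manifold with boundary $\overline{Y}$ contributes its ordinary absolute (resp.\ relative) cohomology, and the overlap, quasi-isometric to a finite product cylinder over $L_p$, contributes $H^*(L_p)$. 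Equivalently, and more cleanly, one forms on $X$ the cohomology sheaf of the presheaf $U\mapsto H^*_{2,max/min}(U\cap reg(X),g)$ and observes that its stalk at a regular point is the constant sheaf while its stalk at $p$ is the truncation \eqref{stia} of $H^*(L_p)$; hence this sheaf satisfies the Goresky--MacPherson axioms characterising the Deligne intersection chain sheaf, with lower middle perversity $\underline m$ in the maximal case and upper middle perversity $\overline m$ in the minimal case. Taking hypercohomology yields \eqref{aosta}.

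\textbf{The main obstacle.} The whole analytic weight of the proof sits in the cone formula: one must justify rigorously that the closed (maximal, resp.\ minimal) extension of $d$ on the metric cone is compatible with the spectral decomposition of $\Delta_F$, so that its $L^2$-cohomology may be computed eigenvalue by eigenvalue, and one must pin down exactly which ideal boundary condition at the vertex is selected by the square-integrability requirement. Once this is settled the remaining steps are essentially formal; the detailed treatment is carried out in \cite{JEC}.
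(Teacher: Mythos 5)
Your proposal is correct and matches the paper, which proves this statement simply by citing Cheeger's work \cite{JEC} (and \cite{JC}), exactly as you do. Your added outline — the decomposition $\omega=\alpha+dr\wedge\beta$, the eigenvalue-by-eigenvalue reduction to weighted ODE complexes with the square-integrability threshold $j<\frac{f}{2}+\frac{1}{2}$, and the identification with the Deligne sheaf axioms for the two middle perversities — is an accurate summary of Cheeger's argument, but the paper itself supplies no more than the reference.
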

\begin{proof}
See \cite{JEC}.
\end{proof}
For the definition and the main  properties of intersection cohomology we refer to \cite{GM} and \cite{GMA}

\begin{lemma}
\label{ravenna}
Let $N$ be a compact manifold of dimension $n$, $E$ a vector bundle over $reg(C(N))$,  let $P_0\in \dif_0^{\mu,\nu}(reg(C(N)),E,E)$ be a positive differential cone operator and let $P$ be a positive self-adjoint extension of $P_0$. Endow $reg(C(N))$ with a conic metric $g$, that is $g=dr^2+r^2h$ where $h$ is a riemannian metric over $N$. Then for each $\lambda>0$
\begin{equation}
\label{varsavia}
e^{-tP}(r, p, s, q) =\frac{1}{\lambda^{n+1}}e^{-t\lambda^{-\nu}P_{\lambda}}(\frac{r}{\lambda},p,\frac{s}{\lambda},q),\ \lambda>0
\end{equation}
In particular we have 
\begin{equation}
\label{cracovia}
e^{-tP}(r, p, r, q) =\frac{1}{r^{n+1}}e^{-tr^{-\nu}P_{r}}(1,p,1,q).
\end{equation}
\end{lemma}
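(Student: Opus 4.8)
The argument is the exact analogue of the one behind Proposition~\ref{dodoma}, the only new ingredient being that the dilation isometry has to be adapted to the volume density $dvol_{g}=r^{n}\,dr\,dvol_{h}$ of the conic metric $g=dr^{2}+r^{2}h$ on $reg(C(N))\cong N\times(0,\infty)$. First I would introduce, for $\lambda>0$, the map
\begin{equation*}
V_{\lambda}\colon L^{2}(reg(C(N)),E)\longrightarrow L^{2}(reg(C(N)),E),\qquad (V_{\lambda}s)(r,p)=\lambda^{\frac{n+1}{2}}s(\lambda r,p).
\end{equation*}
The substitution $\sigma=\lambda r$ in $\int_{0}^{\infty}\!\int_{N}|s|^{2}r^{n}\,dvol_{h}\,dr$ shows that $V_{\lambda}$ is unitary, and one checks $V_{\lambda_{1}}\circ V_{\lambda_{2}}=V_{\lambda_{1}\lambda_{2}}$, whence $V_{\lambda}^{*}=V_{1/\lambda}$, exactly as for the $U_{t}$ used before Proposition~\ref{dodoma}.

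Next I would conjugate $P$ by $V_{\lambda}$. Writing $P$ in the cone form of Definition~\ref{iconti}, $P=x^{-\nu}\sum_{k=0}^{\mu}A_{k}(x)(-x\partial_{x})^{k}$ with $x=r$, a direct computation gives the three elementary identities $V_{\lambda}(x\partial_{x})V_{\lambda}^{*}=x\partial_{x}$ (the Euler field is invariant under $r\mapsto\lambda r$), $V_{\lambda}(x^{-\nu}\,\cdot\,)V_{\lambda}^{*}=\lambda^{-\nu}x^{-\nu}\,\cdot$, and $V_{\lambda}A_{k}(x)V_{\lambda}^{*}=A_{k}(\lambda x)$. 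Since conjugation by a unitary is multiplicative, this yields $V_{\lambda}PV_{\lambda}^{*}=\lambda^{-\nu}P_{\lambda}$, where $P_{\lambda}:=x^{-\nu}\sum_{k}A_{k}(\lambda x)(-x\partial_{x})^{k}$ is again a positive symmetric differential cone operator of the same order and weight $\nu$; the self-adjoint extension of $P_{\lambda}$ transported from that of $P$ still satisfies the hypotheses of Theorem~\ref{gualdo}, so $e^{-t\lambda^{-\nu}P_{\lambda}}$ is trace-class. From $P=\lambda^{-\nu}V_{\lambda}^{*}P_{\lambda}V_{\lambda}$ and the unitarity of $V_{\lambda}$, functional calculus gives $e^{-tP}=V_{\lambda}^{*}\,e^{-t\lambda^{-\nu}P_{\lambda}}\,V_{\lambda}$ for every $t>0$.

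Finally I would pass to Schwartz kernels, all integrals being taken against the conic density $s^{n}\,ds\,dvol_{h}(q)$. Unwinding $(V_{\lambda}^{*}BV_{\lambda}s)(r,p)$ for an operator $B$ with kernel $K_{B}$, and performing the substitution $\sigma=\lambda s$ in the $s$-integral, which turns $s^{n}\,ds$ into $\lambda^{-(n+1)}\sigma^{n}\,d\sigma$, one finds that $V_{\lambda}^{*}BV_{\lambda}$ has kernel $\lambda^{-(n+1)}K_{B}(r/\lambda,p,s/\lambda,q)$; taking $B=e^{-t\lambda^{-\nu}P_{\lambda}}$ gives \eqref{varsavia}, and then putting $s=r$ and afterwards $\lambda=r$ gives \eqref{cracovia}. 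The only point that really needs care is the bookkeeping of the factor $r^{n}$ in the volume density: it is what forces the exponent $\tfrac{n+1}{2}$ in the definition of $V_{\lambda}$ and, downstream, produces the factor $\lambda^{-(n+1)}$ in place of the $\lambda^{-1}$ appearing in Proposition~\ref{dodoma}; the verification that $P_{\lambda}$ again meets the assumptions of Theorem~\ref{gualdo} is routine, and everything else is the analogue of the computation in \cite{MAL}.
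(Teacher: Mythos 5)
Your proposal is correct and follows essentially the same route as the paper: the paper's proof likewise replaces the dilation unitary by $U_t(\gamma)=t^{\frac{n+1}{2}}\gamma(tr,p)$ to account for the conic density $r^n\,dr\,dvol_h$ and then runs the conjugation/scaling argument of Proposition \ref{dodoma} (Lesch's Lemma 2.2.3), producing the factor $\lambda^{-(n+1)}$ exactly as you do. You merely spell out explicitly the conjugation identity, the functional-calculus step and the kernel change of variables that the paper leaves as ``completely analogous''.
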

\begin{proof}
The proof is completely analogous to the proof of Proposition \ref{dodoma}. We have just to add the natural modifications caused by the fact that now the Hilbert space $L^2(reg(C(N)),E)$ is built using the conic metric $g=dr^2+r^2h$ and this means that given $\gamma\in L^2(reg(C(N)),E)$ we have $\|\gamma\|_{L^2(reg(C(N)),E)}=\int_{reg(C(N))}\langle \gamma, \gamma\rangle r^ndrdvol_h$ where $\langle \gamma, \gamma\rangle$ is the pointwise inner product induced by the metric on $E$ (which is a riemannian metric if E is a real vector bundle and is a Hermitian metric if $E$ is complex.). This implies that now the isometry $U_t$, introduced above Proposition \ref{dodoma}, is defined as $U_t:L^2(reg(C(N)),E)\rightarrow L^2(reg(C(N)),E),\ U_t(\gamma)=t^{\frac{n+1}{2}}\gamma(tr,p)$.  The proof follows now in completely analogy to that one of Proposition \ref{dodoma} . Moreover, in the case that $P$ is a positive self-adjoint extension of $\Delta_i:\Omega_c^i(reg(C(N)))\rightarrow \Omega_c^i(reg(C(N)))$, the Laplacian   constructed using a conic metric and acting on the space of smooth $i-$forms with compact support, the proof is given in \cite{JC}, pag. 582.
\end{proof}

Finally we conclude the section with the following proposition; before to state it we introduce some notations.  Given $\lambda\in \mathbb{R}$ we define 
$$p^+(\lambda):=|\lambda+\frac{1}{2}|\ \text{and}$$
\begin{equation}
\label{mediolanum}
 p^-(\lambda):=\left\{
\begin{array}{ll}
|\lambda-\frac{1}{2}| & |\lambda|\geq \frac{1}{2}\\
\lambda-\frac{1}{2} & |\lambda|< \frac{1}{2}
\end{array}
\right.
\end{equation}
Moreover we recall that $I_a(x)$ is the modified Bessel function of order $a$. For the definition see \cite{MAL} pag. 67.

\begin{prop}
\label{saronno}
Let $(N,h)$ be a compact and oriented riemannian  manifold of dimension $n$. Consider $C(N)$ and let $E$ be a vector bundle over $reg(C(N))$ endowed with a metric $\rho$ (hermitian if it is complex o riemannian if it is real). Suppose that $E$ admits an extension over all $[0,\infty)\times N$ that we  denote $\overline{E}$. Let $E_N=\overline{E}|_N$ and suppose that $(E,\rho)$ is isometric to $\pi^*(E_N,\rho|_N)$ where $\pi:(0,\infty)\times N\rightarrow N$ is the natural projection. Finally  let $P:C^{\infty}_c(E)\rightarrow C^{\infty}_{c}(E)$ be an elliptic differential cone operator of order one. Then:
\begin{enumerate}
\item On $L^2(reg(C_2(N)),E)$ built with the product metric $g_p=dr^2+h$, if $P$ satisfies $P=\frac{\partial}{\partial r}+\frac{1}{r}S$, where $S\in \dif^1(N,E_N)$ is elliptic, we have 
\begin{equation}
\label{munda}
e^{-tP_{max}^t\circ P_{min}}(r,p,s,q)=\sum_{\lambda \in \spe S}\frac{1}{2t}(rs)^{\frac{1}{2}}I_{p^+(\lambda)}(\frac{rs}{2t})e^{-\frac{r^2+s^2}{4t}}\Phi_{\lambda}(p,q)
\end{equation}
 and  $$e^{-tP_{min}\circ P^t_{max}}(r,p,s,q)=\sum_{\lambda \in \spe S}\frac{1}{2t}(rs)^{\frac{1}{2}}I_{p^-(\lambda)}(\frac{rs}{2t})e^{-\frac{r^2+s^2}{4t}}\Phi_{\lambda}(p,q)$$ where $\Phi_{\lambda}(p,q)$ is the smooth kernel of $\Phi_{\lambda}:L^2(N,E_N)\rightarrow V_{\lambda}$, the orthogonal projection on the eigenspace $V_{\lambda}.$
\item On $L^2(reg(C_2(N)),E)$ built with  the conic metric $g_c=dr^2+r^2h$, if $P$ satisfies \\$P=\frac{n}{2r}+\frac{\partial}{\partial r}+\frac{1}{r}S$, where $S\in \dif^1(N,E_N)$ is elliptic, we have 
\begin{equation}
\label{tapso}
e^{-tP_{max}^t\circ P_{min}}(r,p,s,q)=\sum_{\lambda \in \spe S}\frac{1}{2t}(rs)^{\frac{1-n}{2}}I_{p^+(\lambda)}(\frac{rs}{2t})e^{-\frac{r^2+s^2}{4t}}\Phi_{\lambda}(p,q)
\end{equation}
 and $$e^{-tP_{min}\circ P^t_{max}}(r,p,s,q)=\sum_{\lambda \in \spe S}\frac{1}{2t}(rs)^{\frac{1-n}{2}}I_{p^-(\lambda)}(\frac{rs}{2t})e^{-\frac{r^2+s^2}{4t}}\Phi_{\lambda}(p,q)$$where $\Phi_{\lambda}(p,q)$ is the smooth kernel of  $\Phi_{\lambda}:L^2(N,E_N)\rightarrow V_{\lambda}$ , the orthogonal projection on the eigenspace $V_{\lambda}.$
\end{enumerate}
\end{prop}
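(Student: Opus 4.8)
The plan is to separate the radial and link directions and thereby reduce everything to a one--dimensional Bessel operator. Since $(E,\rho)\cong\pi^*(E_N,\rho|_N)$ isometrically, in each case the Hilbert space splits as a tensor product $L^2(reg(C_2(N)),E)\cong L^2\big((0,2),\varrho(r)\,dr\big)\otimes L^2(N,E_N)$, with $\varrho\equiv 1$ in the product case and $\varrho(r)=r^n$ in the conic case, and $S$ acts only on the second factor. Being a first order elliptic operator on the closed manifold $N$, $S$ has discrete spectrum; in the situations of interest it is self--adjoint, so that $L^2(N,E_N)$ is the Hilbert sum of its eigenspaces $V_\lambda$ and $\Phi_\lambda$ is the orthogonal projection onto $V_\lambda$. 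Hence $P$, its formal adjoint, and the two non--negative self--adjoint operators $P^t_{max}\circ P_{min}=(P_{min})^{*}P_{min}$ and $P_{min}\circ P^t_{max}=P_{min}(P_{min})^{*}$ all preserve the decomposition into $\lambda$--blocks, on which they become ordinary differential operators in $r$ tensored with $\mathrm{Id}_{V_\lambda}$. It therefore suffices to compute, for each $\lambda\in\spe S$, the heat kernel of the corresponding radial operator, and then to reassemble the blocks, each weighted by $\Phi_\lambda(p,q)$.

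Next I reduce the conic case to the product case. Conjugation by the unitary $L^2((0,2),r^n\,dr)\to L^2((0,2),dr)$, $u\mapsto r^{n/2}u$, sends $\tfrac{n}{2r}+\partial_r+\tfrac1r S$ to $\partial_r+\tfrac1r S$ --- the term $\tfrac{n}{2r}$ in the conic radial operator is precisely what is absorbed by this rescaling --- and turns a kernel with respect to $r^n\,dr$ into one with respect to $dr$ by the factor $(rs)^{-n/2}$. Thus \eqref{tapso} and its relative analogue follow from \eqref{munda} and its analogue upon multiplication by $(rs)^{-n/2}$, which replaces the prefactor $(rs)^{1/2}$ by $(rs)^{(1-n)/2}$ and leaves the Bessel factor, the Gaussian and $\Phi_\lambda(p,q)$ untouched. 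It remains to establish \eqref{munda} and its companion.

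Now in the product case, on the $\lambda$--block the radial operator underlying $(P_{min})^{*}P_{min}$ is, formally, $(-\partial_r+\tfrac\lambda r)(\partial_r+\tfrac\lambda r)=-\partial_r^2+\tfrac{\lambda(\lambda+1)}{r^2}=-\partial_r^2+\tfrac{(\lambda+1/2)^2-1/4}{r^2}$, and that underlying $P_{min}(P_{min})^{*}$ is $(\partial_r+\tfrac\lambda r)(-\partial_r+\tfrac\lambda r)=-\partial_r^2+\tfrac{\lambda(\lambda-1)}{r^2}=-\partial_r^2+\tfrac{(\lambda-1/2)^2-1/4}{r^2}$: these are Bessel operators on the half--line. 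For the relevant self--adjoint realisation of a Bessel operator $-\partial_r^2+\tfrac{a^2-1/4}{r^2}$ the heat kernel is the classical expression $\tfrac{1}{2t}\sqrt{rs}\,I_a\!\big(\tfrac{rs}{2t}\big)e^{-(r^2+s^2)/4t}$; this solves the radial heat equation (for any order $a$, since $I_a$ satisfies Bessel's equation) and concentrates as a Dirac mass as $t\to 0^{+}$, so once the realisation is fixed it is its heat kernel, and summing the blocks against $\Phi_\lambda(p,q)$ yields the asserted formulas. The substantive point --- and the main obstacle --- is to pin down the order $a$: one must decide which self--adjoint extension near $r=0$ is singled out, which is dictated by the minimal domain of $P$ and the maximal domain of $P^t$, i.e.\ by which indicial solution ($r^{-\lambda}$ or $r^{\lambda+1}$, respectively $r^{\lambda}$ or $r^{1-\lambda}$) is admissible there. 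This forces $a=p^{+}(\lambda)=|\lambda+\tfrac12|$ for $(P_{min})^{*}P_{min}$ and $a=p^{-}(\lambda)$ for $P_{min}(P_{min})^{*}$, the negative branch $p^{-}(\lambda)=\lambda-\tfrac12$ occurring exactly in the limit--circle range $|\lambda|<\tfrac12$; I would carry out this extension analysis following \cite{MAL}, whose conventions for $I_a$ we have adopted. (The open end $r=2$ of the truncated cone enters only through terms that are $O(t^{\infty})$ as $t\to0$ by the off--diagonal estimate of Theorem \ref{casale}, and plays no role in the uses made of these formulas, so one may equally well read the statement on $reg(C(N))$.)
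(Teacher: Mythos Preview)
Your proposal is correct and follows essentially the same route as the paper: for the conic case you reduce to the product case via the unitary $u\mapsto r^{n/2}u$ between $L^2((0,2),r^n\,dr)$ and $L^2((0,2),dr)$, exactly as the paper does, obtaining the extra factor $(rs)^{-n/2}$ on the kernel. For the product case the paper simply cites \cite{MAL}, Proposition~2.3.11, whereas you sketch the underlying separation into $\lambda$--blocks and the Bessel heat kernel directly; this is precisely what Lesch does there, so the two arguments coincide.
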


\begin{proof}
The first assertion is proved in \cite{MAL}, see Proposition 2.3.11 and pag. 68. The second statement follows using the following argument. Only for the remaining part  of this proof let us label $L^2(reg(C_2(N)),E, g_p)$ the $L^2$ space of sections built using the product metric $g_p=dr^2+h$ and  $L^2(reg(C_2(N)),E, g_c)$ the $L^2$ space of sections built using the conic metric $g_c=dr^2+r^2h$. The measure induced by $g_p$ is $drdvol_h$ while the measure induce by $g_c$ is $r^ndrdvol_h$. Therefore it is clear that the map $\tau:L^2(reg(C(N)),E, g_c)\rightarrow L^2(reg(C_2(N)),E, g_p) $, $\tau(\gamma)=r^{\frac{n}{2}}\gamma$ is an isometry with inverse given by $\tau^{-1}(\gamma)=r^{\frac{-n}{2}}\gamma$ . A simple calculation shows that $\tilde{P}:=\tau^{-1}\circ P\circ \tau$ satisfies $\tilde{P}=\frac{\partial}{\partial r}+\frac{1}{r}S$. Therefore $\tilde{P}_{max}^t\circ \tilde{P}_{min}=r^{\frac{n}{2}}P_{max}^t\circ P_{min}r^{\frac{-n}{2}}$ and this implies  that  $$e^{-t\tilde{P}_{max}^t\circ \tilde{P}_{min}}=r^{\frac{n}{2}}e^{-tP_{max}^t\circ P_{min}}r^{\frac{-n}{2}}.$$ Therefore if we call $\tilde{k}(t,r,p,s,q)$ the heat kernel relative to $e^{-t\tilde{P}_{max}^t\circ \tilde{P}_{min}}$ and analogously $k(t,r,p,s,q)$ the heat kernel relative to $e^{-tP_{max}^t\circ P_{min}}$ we have, for each $\gamma \in L^2(reg(C_2(N)),E, g_p)$, $$\int_{reg(C_2(N))}\tilde{k}(t,r,p,s,q)\gamma(s) dsdvol_h=\int_{reg(C_2(N))}r^{\frac{n}{2}}k(t,r,p,s,q)s^{\frac{-n}{2}}\gamma(s)s^ndsdvol_h$$ and therefore $\tilde{k}(t,r,p,s,q)=r^{\frac{n}{2}}k(t,r,p,s,q)s^{\frac{n}{2}}$. Finally, applying  this last equality to \eqref{munda},  we get  \eqref{tapso}. For the heat kernel of $e^{-tP_{min}\circ P_{max}^t}$ the proof is completely analogous to the previous one.
\end{proof}


\section{Geometric endomorphisms }
The goal of this section is to introduce and study the notion of \textbf{geometric endomorphism} of an elliptic complex of differential cone operators. 
\\Let $X$ be  a compact manifold with conical singularities and let $M$ be its regular part that, as explained after Definition \ref{iconti}, we identify with the interior part of $\overline{M}$ the manifold with boundary which desingularizes $X$, see Prop. \ref{palcano}. Finally consider an elliptic complex of differential cone operators as described in Definition \ref{perugia}:

\begin{equation}
\label{palermo}
0\rightarrow C^{\infty}_{c}(M,E_{0})\stackrel{P_{0}}{\rightarrow}C^{\infty}_{c}(M,E_{1})\stackrel{P_{1}}{\rightarrow}...\stackrel{P_{n-1}}{\rightarrow}C^{\infty}_{c}(M,E_n)\stackrel{P_{n}}{\rightarrow}0
\end{equation} 

\begin{defi}
\label{velletri}
A \textbf{geometric endomorphism} $T$ of \eqref{palermo} is given by a $n-$tuple of maps $T=(T_1,...,T_n)$, where each $T_i$ maps $C^{\infty}(M,E_i)$ to itself, constructed in the following way: there exists a smooth map $f:\overline{M}\rightarrow \overline{M}$ and a $n-$tuples of morphisms of bundles $\phi_i:f^*E_i\rightarrow E_i$ such that the following properties hold:
\begin{enumerate}
\item $f: \overline{M}\rightarrow \overline{M}$ is a diffeomorphism.
\item If, with a little abuse of notation, we still label with $f:X\rightarrow X$ the isomorphism  that $f:\overline{M}\rightarrow \overline{M}$ induces on $X$ then we require that $f(q)=q$ for each $q\in sing(X)$.
\item $T_i=\phi_i\circ f^*$ where $f^*$ acts naturally between $C^{\infty}(M,E_i)$ and $C^{\infty}(M,f^*E_i)$.
\item $P_i\circ T_i=T_{i+1}\circ P_i$.
\end{enumerate}
\end{defi}

We make a little comment on the above definition.  
The  third and the fourth property are exactly the definition of geometric endomorphism of an elliptic complex over a closed manifold given in \cite{ABL1}.  
However our definition is not a complete extension of that one given by Atiyah and Bott in \cite{ABL1}.  The reason is that in the closed case any smooth map is allowed. For our purposes we need that $T_i$ induce a bounded map from $L^2(M,E_i)$  to itself and clearly this prevents us to allow every smooth map in Definition \ref{velletri}. As we will see in the following lemma, the property that $f:\overline{M}\rightarrow \overline{M}$ is a diffeomorphism is a reasonable sufficient condition in order  to get a bounded extension of $T_i$ on $L^2(M,E_i).$ 

\begin{lemma}
\label{sorano}
In the same hypothesis of the above definition the endomorphism $T$ satisfies that the following properties:
\begin{enumerate}
\item For each $i$ and for each  $\psi\in C^{\infty}_c(M,E_i)$ we have $T_i(\psi)\in C^{\infty}_c(M,E_i).$
\item For each $i$ $T_i$ extends as a bounded operator from $L^2(M,E_i)$ to itself; with a small abuse of notation, we denote this again by $T_i$. 
\item Let $T_i^*:L^2(M,E_i)\rightarrow L^2(M,E_i)$ be the adjoint of $T_i$. Then   for each  $\psi\in C^{\infty}_c(M,E_i)$ we have $T_i^*(\psi)\in C^{\infty}_c(M,E_i).$
\end{enumerate}
\end{lemma}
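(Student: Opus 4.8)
The plan is to verify the three claims in turn, exploiting the fact that $f$ is a diffeomorphism of the compact manifold with boundary $\overline{M}$ and that the $\phi_i$ are smooth bundle maps up to the boundary.

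\textbf{Property 1.} First I would observe that $f^*$ sends $C^\infty(M,E_i)$ to $C^\infty(M,f^*E_i)$ and, because $f:\overline{M}\to\overline{M}$ is a diffeomorphism, it maps compactly supported sections to compactly supported sections: if $\psi$ has support in a compact $K\subset M$, then $f^*\psi$ has support in $f^{-1}(K)$, which is a compact subset of $M$ since $f^{-1}$ is continuous and $f^{-1}(M)=M$ (as $f$ preserves the boundary, being a diffeomorphism of $\overline{M}$ — note $f(\partial\overline{M})=\partial\overline{M}$). Composing with the smooth bundle morphism $\phi_i:f^*E_i\to E_i$ does not enlarge the support and preserves smoothness, so $T_i\psi=\phi_i\circ f^*\psi\in C^\infty_c(M,E_i)$.

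\textbf{Property 2.} For the bounded extension, I would estimate the $L^2$ norm of $T_i\psi$ for $\psi\in C^\infty_c(M,E_i)$ by the change-of-variables formula for the diffeomorphism $f$. Writing $\|T_i\psi\|^2_{L^2}=\int_M \langle \phi_i(f^*\psi),\phi_i(f^*\psi)\rangle_{h_i}\,dvol_g$ and substituting $y=f(x)$, one gets $\int_M \langle \phi_i(\psi(y)),\phi_i(\psi(y))\rangle_{h_i}\,|\det df^{-1}|\,dvol_g$ up to the Jacobian comparing $f^*(dvol_g)$ with $dvol_g$. The key point is that this Jacobian factor, together with the pointwise operator norm of $\phi_i$ with respect to the metrics, is a bounded function on $\overline{M}$: both the bundle metrics and $\phi_i$ extend smoothly and nondegenerately up to the boundary (this is part of the definition of a geometric endomorphism / differential cone operator, and the metrics are nondegenerate up to the boundary by hypothesis), while the conic metric's volume density $r^{\dim L}\,dr\,dvol_h$ is transformed by $f$ in a controlled way precisely because $f$ is a diffeomorphism of $\overline{M}$ respecting the collar structure. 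Hence there is a constant $C>0$ with $\|T_i\psi\|_{L^2}\le C\|\psi\|_{L^2}$ for all $\psi\in C^\infty_c(M,E_i)$, and since $C^\infty_c(M,E_i)$ is dense in $L^2(M,E_i)$, $T_i$ extends uniquely to a bounded operator on $L^2(M,E_i)$.

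\textbf{Property 3.} Finally, for the adjoint, I would compute $T_i^*$ explicitly. For $\psi,\eta\in C^\infty_c(M,E_i)$, writing out $\langle T_i\psi,\eta\rangle_{L^2}=\langle \phi_i(f^*\psi),\eta\rangle_{L^2}$ and changing variables back via $x\mapsto f^{-1}(x)$, one finds $T_i^*\eta=J\cdot (f^{-1})^*(\phi_i^*\eta)$, where $\phi_i^*$ is the pointwise adjoint bundle morphism (with respect to the metrics $h_i$) and $J$ is the smooth Jacobian density of $f^{-1}$ relative to $dvol_g$, which is smooth and positive on $\overline{M}$ by the same considerations as above. This is again of ``geometric endomorphism'' type: pullback by the diffeomorphism $f^{-1}:\overline{M}\to\overline{M}$ followed by a smooth bundle morphism, multiplied by a smooth function. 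By exactly the argument of Property 1, it carries $C^\infty_c(M,E_i)$ into itself. I expect the only genuine subtlety to be Property 2 — specifically, checking that the conformal/Jacobian factor produced by pulling back the degenerate conic volume form under $f$ is bounded near the boundary; this is where the hypothesis that $f$ extends to a genuine diffeomorphism of $\overline{M}$ (rather than merely a diffeomorphism of the open manifold $M$) is essential, and I would isolate it as the main lemma, using the collar coordinates and the form \eqref{cerronew} of $f$ there to see that $A(r,p)$ is bounded above and below away from $0$.
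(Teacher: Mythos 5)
Your proposal is correct and follows essentially the same route as the paper: properties 1 and 2 are deduced from the fact that $f$ extends to a diffeomorphism of the compact manifold $\overline{M}$ (you simply spell out the Jacobian/volume-density boundedness that the paper declares immediate), and for property 3 you derive the same explicit formula for the adjoint, a Jacobian factor times $(f^{-1})^*$ composed with the pointwise adjoint bundle morphism $\phi_i^*$, which is exactly the operator $S_i$ the paper exhibits and checks against $T_i$ on $C^{\infty}_c(M,E_i)$.
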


\begin{proof}
The first two properties follow immediately by the fact that $f:\overline{M}\rightarrow \overline{M}$ is a diffeomorphism and that $\overline{M}$ is compact. For the third properties, we observe first of all that $T_i$ admits an adjoint because it is densely defined and that $T_i^*$ is bounded and defined over the whole $L^2(M,E_i)$ because $T_i$ is bounded. Now consider the bundle $f^*E_i$. The metric over $E_i$ induces in a natural way through $f$ a metric over $f^*E_i$. Therefore it make sense consider the bundle homomorphism $\phi_i^*:E_i\rightarrow f^*E_i$ defined in each fiber as the adjoint of $\phi_i$. Now consider the pull-back under $f$ of the volume form $dvol_g$. Then there exists a smooth function $\tau$ such that $\tau dvol_g=f^*dvol_g$ and $\tau>0$ if $f$ preserves the orientation of $M$, $\tau<0$ if $f$ reverses the orientation of $M$.  Finally define   $S:C^{\infty}_c(M,E_i)\rightarrow C^{\infty}_{c}(M,E_i)$ as 
\begin{equation}
\label{chieri}
 S_i(\psi):=\left\{
\begin{array}{ll}
  \tau(\phi_i^*\circ (f^{-1})^*)(\psi) & \text{if}\ f\  \text{preserves the orientation}\\
- \tau(\phi_i^*\circ (f^{-1})^*)(\psi) & \text{if}\  f\  \text{reserves the orientation}
\end{array}
\right.
\end{equation}
It is immediate to check that for each $\psi_1,\psi_2\in C^{\infty}_c(M,E_i)$ we have $$<T_i(\psi_1),\psi_2>_{L^2(M,E_i)}=<\psi_1,S_i(\psi_2)>_{L^2(M,E_i)}.$$Therefore, over $C^{\infty}_c(M,E_i)$ , $T_i^*$ coincides with $S$ and so from this  the third property follows immediately.
\end{proof}

Now we state  the following property :
\begin{prop}
\label{sanrocco}
Let $M$ be an  open and oriented riemannian manifold and let $g$ be an incomplete riemannian metric on $M$. Let $E_0,...,E_n$ be a sequence of vector bundles over $M$ and consider a complex of differential operators:
\begin{equation}
\label{livorno}
0\rightarrow C^{\infty}_{c}(M,E_{0})\stackrel{P_{0}}{\rightarrow}C^{\infty}_{c}(M,E_{1})\stackrel{P_{1}}{\rightarrow}...\stackrel{P_{n-1}}{\rightarrow}C^{\infty}_{c}(M,E_n)\stackrel{P_{n}}{\rightarrow}0
\end{equation} 
Let $T=(T_0,...,T_n)$ be an  endomorphism of  \eqref{livorno} that satisfies the second and the third properties  of Lemma \ref{sorano}.
Then we have the following properties:
\begin{enumerate}
\item For each $i=0,...,n$, for each $s\in \mathcal{D}(P_{min,i})$ we have $T_i(s)\in \mathcal{D}(P_{min,i})$ and $P_{min,i}\circ T_{i}=T_{i+1}\circ P_{min,i}$.
\item For each $i=0,...,n$, for each $s\in \mathcal{D}(P_{max,i})$ we have $T_i(s)\in \mathcal{D}(P_{max,i})$ and $P_{max,i}\circ T_{i}=T_{i+1}\circ P_{max,i}$.
\end{enumerate}
\end{prop}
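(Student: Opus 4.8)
The plan is to deal with the minimal extension and the maximal extension separately. For the minimal one I would pass to the closure of the graph of $P_i$ on $C^\infty_c(M,E_i)$; for the maximal one I would argue by duality, pairing against compactly supported test sections. In both cases the second property of Lemma \ref{sorano}, namely that each $T_j$ is bounded on $L^2(M,E_j)$, is what lets the limiting arguments close up, while for the maximal extension the third property of Lemma \ref{sorano} — that the Hilbert adjoint $T_j^*$ preserves $C^\infty_c(M,E_j)$ — is used in an essential way.

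For the first assertion, recall that being an endomorphism of \eqref{livorno} means that each $T_i$ maps $C^\infty_c(M,E_i)$ into itself and $P_i\circ T_i=T_{i+1}\circ P_i$ on $C^\infty_c(M,E_i)$. Given $s\in\mathcal{D}(P_{min,i})$, Definition \ref{reso} furnishes a sequence $\omega_j\in C^\infty_c(M,E_i)$ with $\omega_j\to s$ and $P_i\omega_j\to P_{min,i}s$ in $L^2$. Then $T_i\omega_j\in C^\infty_c(M,E_i)$, and boundedness of $T_i$ and of $T_{i+1}$ gives $T_i\omega_j\to T_is$ and $P_i(T_i\omega_j)=T_{i+1}(P_i\omega_j)\to T_{i+1}(P_{min,i}s)$ in $L^2$; by Definition \ref{reso} this is exactly the statement that $T_is\in\mathcal{D}(P_{min,i})$ and $P_{min,i}(T_is)=T_{i+1}(P_{min,i}s)$. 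Compactly: the bounded operator $T_i\oplus T_{i+1}$ carries the graph of $P_i|_{C^\infty_c(M,E_i)}$ into itself, hence carries its closure, which is the graph of $P_{min,i}$, into itself.

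For the second assertion the key preliminary is to transfer the intertwining relation to the formal adjoints, i.e. to prove
$$T_i^*\circ P_i^t=P_i^t\circ T_{i+1}^*\qquad\text{on }C^\infty_c(M,E_{i+1}).$$
Starting from $P_i\circ T_i=T_{i+1}\circ P_i$ on $C^\infty_c$, for $\psi\in C^\infty_c(M,E_i)$ and $\zeta\in C^\infty_c(M,E_{i+1})$, repeatedly using $\langle T_j\alpha,\beta\rangle=\langle\alpha,T_j^*\beta\rangle$ and $\langle P_i\alpha,\beta\rangle=\langle\alpha,P_i^t\beta\rangle$ one gets $\langle\psi,T_i^*(P_i^t\zeta)\rangle=\langle P_i(T_i\psi),\zeta\rangle=\langle T_{i+1}(P_i\psi),\zeta\rangle=\langle\psi,P_i^t(T_{i+1}^*\zeta)\rangle$; this is legitimate because, by the third property of Lemma \ref{sorano}, $T_i^*(P_i^t\zeta)$ and $T_{i+1}^*\zeta$ again lie in $C^\infty_c$, and density of $C^\infty_c(M,E_i)$ in $L^2(M,E_i)$ yields the identity. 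Now take $s\in\mathcal{D}(P_{max,i})$ and put $\eta=P_{max,i}s$. For every $\zeta\in C^\infty_c(M,E_{i+1})$ we compute, using $T_{i+1}^*\zeta\in C^\infty_c(M,E_{i+1})$ and Definition \ref{nino},
$$\langle T_is,P_i^t\zeta\rangle=\langle s,T_i^*(P_i^t\zeta)\rangle=\langle s,P_i^t(T_{i+1}^*\zeta)\rangle=\langle\eta,T_{i+1}^*\zeta\rangle=\langle T_{i+1}\eta,\zeta\rangle.$$
Since $T_{i+1}\eta\in L^2(M,E_{i+1})$, Definition \ref{nino} gives $T_is\in\mathcal{D}(P_{max,i})$ and $P_{max,i}(T_is)=T_{i+1}\eta=T_{i+1}(P_{max,i}s)$.

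The one genuinely delicate point is the adjoint intertwining identity above: one must be sure that the manipulations produce no boundary contribution at the locus where the incomplete metric $g$ degenerates. This is precisely what the hypothesis that $T_i^*$ and $T_{i+1}^*$ send $C^\infty_c$ to $C^\infty_c$ guarantees, since then every pairing occurring in the argument is between compactly supported smooth sections, for which the formal adjoint relation $\langle P_j\alpha,\beta\rangle=\langle\alpha,P_j^t\beta\rangle$ holds with no correction term. Everything else is routine bookkeeping with Definitions \ref{nino} and \ref{reso}.
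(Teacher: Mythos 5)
Your proof is correct. Part 1 is exactly the paper's argument: push an approximating sequence from $C^{\infty}_c(M,E_i)$ through the bounded maps $T_i$, $T_{i+1}$ and use the intertwining on smooth compactly supported sections, i.e.\ the graph of $P_{min,i}$ is preserved. For part 2 you take a slightly different, and in fact more self-contained, route than the paper: you first establish the formal-adjoint intertwining $T_i^*\circ P_i^t=P_i^t\circ T_{i+1}^*$ on $C^{\infty}_c(M,E_{i+1})$ directly, by pairing against $C^{\infty}_c(M,E_i)$ and using density, whereas the paper obtains the corresponding identity by taking Hilbert-space adjoints of the minimal-extension relation $P_{min,i}\circ T_i=T_{i+1}\circ P_{min,i}$ proved in part 1, together with the rule $(T_{i+1}\circ P_{min,i})^*=P_{min,i}^*\circ T_{i+1}^*$ valid because $T_{i+1}$ is bounded. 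After that intermediate identity the two arguments coincide: both pair $T_is$ against $P_i^t\zeta$, use that $T_{i+1}^*\zeta\in C^{\infty}_c(M,E_{i+1})$ (the third property of Lemma \ref{sorano}) so that Definition \ref{nino} applies with the test section $T_{i+1}^*\zeta$, and conclude $T_is\in\mathcal{D}(P_{max,i})$ with $P_{max,i}(T_is)=T_{i+1}(P_{max,i}s)$. What your variant buys is that part 2 becomes logically independent of part 1 and avoids any calculus of adjoints of unbounded compositions; what the paper's variant buys is a slightly shorter derivation once part 1 is in hand. Your closing remark is also the right diagnosis of the delicate point: all pairings occur between smooth compactly supported sections on the open manifold $M$, so no boundary terms from the incompleteness of $g$ can appear.
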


\begin{proof}
Let $i\in \{0,...,n\}$ and let $s\in \mathcal{D}(P_{min,i})$. Then there exists a sequence $\{s_j\}_{j\in \mathbb{N}}$ such that $s_j\rightarrow s$ in $L^2(M,E_i)$ and $P_i(s_j)\rightarrow P_i(s)$ in $L^2(M,E_{i+1})$.  By the assumptions, we know that $\{T_i(s_j)\}_{j\in \mathbb{N}}$ is a sequence of smooth sections with compact support contained in $C^{\infty}_c(M,E_i)$ such that  $T_i(s_j)\rightarrow T_i(s)$ in $L^2(M,E_i)$ and $T_{i+1}(P_i(s_j))\rightarrow T_{i+1}(P_i(s))$ in $L^2(M,E_{i+1})$. But $T_{i+1}(P_i(s_j))=P_i(T_{i}(s_j))$. Therefore $P_i(T_{i}(s_j))$ converges in $L^2(M,E_{i+1})$ and this implies that $T_{i}(s)\in \mathcal{D}(P_{min,i})$ and that $P_{min,i}\circ T_{i}=T_{i+1}\circ P_{min,i}$.\\Now we give the proof of the second statement. From the first part of the proof it follows that, if we look at $T_{i+1}\circ P_{min,i}$, $P_{min,i}\circ T_i$ as unbounded operator with domain $\mathcal{D}(P_{min,i})$ then $T_{i+1}\circ P_{min,i}=P_{min,i}\circ T_i$ and therefore $(T_{i+1}\circ P_{min,i})^*=(P_{min,i}\circ T_i)^*$. Moreover, by the fact that $T_{i+1}$ is bounded, it follows that $(T_{i+1}\circ P_{min,i})^*=P_{min,i}^*\circ T_{i+1}^*$ with domain given by $(T_{i+1}^*)^{-1}(\mathcal{D}(P_{min,i}^*))$. Now let $s\in \mathcal{D}(P_{max,i})$ and let $\phi\in C^{\infty}_c(M,E_{i+1})$. Then $$<T_{i}(s),P_i^t(\phi)>_{L^2(M,E_i)}=<s,T_i^*(P_i^t(\phi))>_{L^2(M,E_i)}=<s,(P_{min,i}\circ T_i)^*(\phi)>_{L^2(M,E_i)}=$$  $$=<s,P_{min,i}^*(T_{i+1}^*(\phi))>_{L^2(M,E_i)}=   (\text{because}\ T_{i+1}^*(\phi)\in C^{\infty}_c(M,E_{i+1}))  $$ $$=<s,P_{max,i}^*(T_{i+1}^*(\phi))>_{L^2(M,E_i)}=<P_{max,i}(s),(T_{i+1}^*(\phi))>_{L^2(M,E_i)}$$ $$=<T_{i+1}(P_{max,i}(s)),\phi>_{L^2(M,E_i)}.$$ So we can conclude that $T_i(s)\in \mathcal{D}(P_{max,i})$ and that $T_{i+1}\circ P_{max,i}=P_{max,i}\circ T_{i}.$ 
\end{proof}

In the rest of this section we describe the notion of \textbf{non degeneracy condition} for a fixed point of a map $f:X\rightarrow X$. As we will see, over the regular part of $X$, this is the same of the one used in \cite{ABL1}. \\Let $X$ be a compact manifold with conical singularities and let  $f:X\rightarrow X$ a continuous map  such that $f(sing(X))\subset sing(X)$, $f(reg(X))\subset reg(X)$ and $f|_{reg(X)}$ is a smooth map. Define 
\begin{equation}
\label{buotano}
Fix(f):=\{p\in X: f(p)=p\}
\end{equation}
\begin{defi}
\label{ilpiano}
A point $p\in reg(X)\cap Fix(f)$ is said to be simple if $det(Id-d_{p}f)\neq 0$.
\end{defi}
Obviously this definition make sense because,  being $p$   a fixed point, it follows that $d_pf$ is an endomorphism of $T_p(reg(X))$. Moreover it is easy to show that Definition \ref{ilpiano} is equivalent to require that, on $reg(X)\times reg(X)$, $\mathcal{G}(f)$ meets transversely $\Delta_{reg(X)}$ on $(p,p)$, where $\mathcal{G}(f)$ is the graph of $f|_{reg(X)}$ and $\Delta_{reg(X)}$ is the diagonal of $reg(X)$ . In this way we get the following useful corollary:
\begin{cor}
\label{fossatodivico}
Each simple fixed point in $reg(X)\cap Fix(f)$ is an isolated fixed point.
\end{cor}
Now, following \cite{NSSS}, \cite{NS} but with little modifications, we recall what is a simple fixed point $p\in Fix(f)\cap sing(X)$.
As we said above, we assumed that $f(sing(X))\subset sing(X)$ and that $f(reg(X))\subset reg(X)$. Therefore if $q\in sing(X)\cap Fix(f)$ is a fixed conical point it follows that, on a neighborhood $U_q\cong C_2(L_q)$ of $q$, $f$ takes the form:
\begin{equation}
\label{cerrone}
f(r,p)=(rA(r,p),B(r,p))
\end{equation}
 We make the additional assumption  that  $A(r,p)$ and $B(r,p)$ are smooth up to zero, that is $$A(r,p):[0,2)\times L_q\rightarrow [0,2)$$ is smooth up to $0$ and analogously $$B(r,p):[0,2)\times L_q\rightarrow L_q$$ is smooth up to $0$. Moreover, by the fact that $f(sing(X))\subset sing(X)$ and that $f(reg(X))\subset reg(X)$ it follows that $A(r,p)\neq 0$ for $r>0$. Obviously if our starting point is a diffeomorphism $\overline{f}:\overline{M}\rightarrow \overline{M}$ as in Definition \ref{velletri}, then these requirements are automatically satisfied.

\begin{defi}
\label{serramaggio}
A point $q\in Fix(f)\cap sing(X)$ is a  \textbf{simple} fixed point if for each $p\in L_q$ at least one of the following conditions is satisfied:
\begin{enumerate}
\item   $A(0,p)\neq 1$.
\item  $B(0,p)\neq p$.
\end{enumerate}
\end{defi}
A natural question follows from Definition \ref{serramaggio}: what is the meaning of these requirements? The answer is that if $f$ satisfies one of the two requirements above then a sequence of fixed points converging to $q$ cannot exists and therefore $q$ is an isolated fixed point. We can show this last properties in the following way: suppose that $\{(r_j,p_j)\}$ is a sequence  of fixed point of $f$ contained in $U_q\cong C_2(L_q)$ such that $r_j\rightarrow 0$ when $j\rightarrow \infty$. Then $\{p_j\}$ is a sequence of point in $L_q$ which is compact and therefore there exists a subsequence, that with a little abuse of notations we still label $\{p_j\}$, such that $p_j$ converges to some $p\in L_q$.  By the assumptions, for each $j$, $(r_j,p_j)=(r_jA(r_j,p_j), B(r_j,p_j))$.  Therefore $1=\lim_{j\rightarrow \infty}A(r_j,p_j)=A(0,p)$, $B(r_j,p_j)=p_j$ for each $j$ and this implies that $f$ does not satisfies both the properties of Definition \ref{serramaggio}.

 So we can state the following useful corollary:
\begin{cor}
\label{lacasella}
Let $X$ be a compact manifold with conical singularities and let $f:X\rightarrow X$ a map such that $f(sing(X))\subset sing(X)$, $f(reg(X))\subset reg(X)$, $f|_{reg(X)}:reg(X)\rightarrow reg(X)$ is smooth and, on a neighborhood of a conical point, $A(r,p)$ and $B(r,p)$ are smooth up to 0. Then, if $f$ has only simple fixed point, $Fix(f)$ is made of a finite number of points.
\end{cor}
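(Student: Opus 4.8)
The plan is to combine the two isolation statements already in hand — Corollary \ref{fossatodivico} for fixed points in $reg(X)$ and the discussion immediately preceding the statement for fixed points in $sing(X)$ — with a soft compactness argument. First I would note that $f$ is a continuous self-map of the whole of $X$: near a conical point $q$ the local form $f(r,p)=(rA(r,p),B(r,p))$ with $A,B$ smooth up to $0$ gives $rA(r,p)\to 0$ as $r\to 0$, hence $f(r,p)\to v$, so $f$ is continuous at $q$ as well. Since $X$ is metrizable (Definition \ref{gubbio}), the function $p\mapsto d(f(p),p)$ is continuous and $Fix(f)$ is its zero set; thus $Fix(f)$ is a closed subset of the compact space $X$, and therefore compact in the subspace topology.

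Next I would show that every point of $Fix(f)$ is isolated in $Fix(f)$. If $q\in reg(X)\cap Fix(f)$ this is Corollary \ref{fossatodivico}: choosing the neighbourhood inside the open set $reg(X)$, it is also open in $X$ and meets $Fix(f)$ only in $q$. If $q\in sing(X)\cap Fix(f)$, recall that $X$ compact forces $sing(X)$ to be finite, so it suffices to rule out a sequence of fixed points distinct from $q$ converging to $q$. Any such sequence lies eventually in a conical neighbourhood $U_q\cong C_2(L_q)$ and converges to the vertex, hence its radial coordinates satisfy $r_j\to 0$; passing to a subsequence with angular coordinates $p_j\to p\in L_q$ (possible since $L_q$ is compact) and using $(r_j,p_j)=(r_jA(r_j,p_j),B(r_j,p_j))$ gives, in the limit, $A(0,p)=1$ and $B(0,p)=p$, contradicting the simplicity of $q$ in the sense of Definition \ref{serramaggio}. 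Hence $q$ is isolated in $Fix(f)$.

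Finally I would invoke the elementary topological fact that a compact space all of whose points are isolated is finite: the singletons $\{p\}$, $p\in Fix(f)$, form an open cover of the compact set $Fix(f)$, so a finite subcover exists, whence $Fix(f)$ itself is finite.

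The only mildly delicate point — the closest thing to an obstacle — is the accumulation argument at a conical point: one must make sure that fixed points cannot approach $q$ along varying directions of the link, which is precisely why the compactness of $L_q$ and the passage to a convergent subsequence of angular coordinates are used; once this is granted, and once the continuity of $f$ across the singular set is recorded, the rest is routine.
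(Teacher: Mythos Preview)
Your proposal is correct and follows essentially the same line as the paper's own proof: both arguments combine Corollary \ref{fossatodivico} and the discussion preceding Definition \ref{serramaggio} to conclude that every simple fixed point is isolated, and then deduce finiteness from the compactness of $X$. Your version is in fact more carefully written --- you explicitly verify continuity of $f$ at the conical points and closedness of $Fix(f)$, points the paper passes over in one sentence --- but there is no genuinely different idea at play.
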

\begin{proof}
If  $f$ has only simple fixed points  then we already know that each of this fixed points is an isolated fixed point and this implies that $Fix(f)$ is a sequence without accumulation points. Therefore, by the compactness of $X$, it follows that $Fix(f)$ is made of a finite number of points.
\end{proof}

Now we state  the following definition:

\begin{defi}
\label{foggia}
Let $f$ be as in the previous corollary. Let $q\in Fix(f)\cap sing(X)$ a simple fixed point for $f$ such that $f$ satisfies the first requirement of Definition \ref{serramaggio}. Then if for each $p\in L_q$ 
\begin{equation}
\label{sucre}
A(0,p)<1
\end{equation}
 $q$ is called \textbf{attractive simple fixed point} while if 
\begin{equation}
\label{sassoferrato}
A(0,p)>1
\end{equation}
then $q$ is called \textbf{repulsive simple fixed point}.
\end{defi}

Clearly if for each $q\in sing(X)$ the relative link $L_q$ is connected then each simple fixed point $q\in sing(X)$ satisfying the first property of  Definition \ref{serramaggio}  is  necessarily attractive or repulsive. 

 Finally we conclude the section observing that in \cite{GMAC},  pag. 384, Goresky and MacPherson introduced the  notion of contracting fixed point. An elementary  check shows that \eqref{sucre} is equivalent to the definition given by Goresky and MacPherson.

\section{$L^2-$Lefschetz numbers of a geometric endomorphism}
Let $X$ be a compact manifold with conical singularities of dimension $m+1$. Consider an elliptic complex of cone differential operators as defined in Definition \ref{perugia}:
\begin{equation}
\label{catania}
0\rightarrow C^{\infty}_{c}(M,E_{0})\stackrel{P_{0}}{\rightarrow}C^{\infty}_{c}(M,E_{1})\stackrel{P_{1}}{\rightarrow}...\stackrel{P_{n-1}}{\rightarrow}C^{\infty}_{c}(M,E_n)\stackrel{P_{n}}{\rightarrow}0
\end{equation} 
where $P_i\in \dif_0^{\mu,\nu}(M,E_i,E_{i+1})$ and let $T=\phi\circ f$ be a geometric endomorphism of \eqref{catania} as in Definition \ref{velletri}. Obviously, with a small abuse of notation, we are using the same notation for  the diffeomorphism $f:\overline{M}\rightarrow \overline{M}$  and for the isomorphism that it induces on $X$. We recall that  the isomorphism $f:X\rightarrow X$ satisfies:
\begin{enumerate}
\item $f|_{reg(X)}:reg(X)\rightarrow reg(X)$ is a diffeomorphism
\item For each $p\in sing(X)$ we have $f(p)=p$
\item $A(r,p)$ and $B(r,p)$ (see \eqref{cerrone}) are smooth up to $0$.
\end{enumerate}  
Using Corollary \ref{icordelli} we know that both the complexes $(L^2(M,E_i),P_{max/min,i})$ are Fredholm complexes,  that is the cohomology groups $H^i_{2,max/min}(M,E_i)$ are finite dimensional.\\ Moreover by Proposition \ref{sanrocco} we know that $T$ is a morphism of both  complexes $(L^2(M,E_i),P_{max/min,i})$. Therefore, for each $i=0,...,n$, it induces an endomorphism $$T_i^*:H^i_{2,max}(M,E_i)\rightarrow H_{2,max}^i(M,E_i)\ \text{and\ analogously}\ T_i^*:H^i_{2,min}(M,E_i)\rightarrow H_{2,min}^i(M,E_i).$$
So we are in position to give the following definition:
\begin{defi}
\label{ferrara}
The $L^2-$Lefschetz numbers of $T$ are defined in the following way:
\begin{equation}
\label{amburgo}
L_{2,max}(T)=\sum_{i=0}^n(-1)^i\tr(T_i^*:H^i_{2,max}(M,E_i)\rightarrow H^i_{2,max}(M,E_i))
\end{equation}
and analogously
\begin{equation}
\label{berlino}
L_{2,min}(T)=\sum_{i=0}^n(-1)^i\tr(T_i^*:H^i_{2,min}(M,E_i)\rightarrow H^i_{2,min}(M,E_i))
\end{equation}
\end{defi}
The $L^2-$Lefschetz numbers satisfy the following property:
\begin{prop}
\label{ancona}
$L_{2,max/min}(T)$ do not depend on the conic metric $g$ we fix on $M$ and on the metrics $\rho_0,...,\rho_n$ that we fix on $E_0,...,E_n$
\end{prop}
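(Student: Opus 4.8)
The plan is to reduce the statement to the observation that any two admissible choices of metrics produce $L^{2}$ spaces that coincide as sets and carry equivalent norms, so that the entire Hilbert-complex apparatus — maximal and minimal domains, (reduced and unreduced) cohomology, and the maps induced by $T$ — is transported unchanged by the identity map. Fix two conic metrics $g,g'$ on $M$ and two systems of bundle metrics $\rho_{0},\dots,\rho_{n}$ and $\rho_{0}',\dots,\rho_{n}'$.

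First I would establish the uniform comparison
\[
C^{-1}\,\|\omega\|_{L^{2}(M,E_{i};\,g,\rho_{i})}\ \le\ \|\omega\|_{L^{2}(M,E_{i};\,g',\rho_{i}')}\ \le\ C\,\|\omega\|_{L^{2}(M,E_{i};\,g,\rho_{i})},\qquad \omega\in C^{\infty}_{c}(M,E_{i}),
\]
with $C$ independent of $\omega$. Since $X$ is compact it has finitely many conical points, and away from a relatively compact neighbourhood of $sing(X)$ the relevant region is a compact piece of $\overline{M}$, on which two non-degenerate metrics on $E_{i}$ and two smooth positive volume densities are automatically comparable. Near a conical point $q$, writing $g=dr^{2}+r^{2}h(r)$ and $g'=dr^{2}+r^{2}h'(r)$ with $h(r),h'(r)$ smooth up to $r=0$ and positive-definite on the compact link $L_{q}$, the volume densities are $r^{n}\sqrt{\det h(r)}\,dr\,dvol_{L_{q}}$ and $r^{n}\sqrt{\det h'(r)}\,dr\,dvol_{L_{q}}$; the weight $r^{n}$ cancels and $\sqrt{\det h(r)/\det h'(r)}$ extends positively and continuously to $r=0$, hence is bounded above and below on $[0,1)\times L_{q}$. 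Likewise $\rho_{i}$ and $\rho_{i}'$ are non-degenerate up to the boundary, so their pointwise ratios on the fibres of $E_{i}$ are bounded above and below on the compact $\overline{M}$. Patching the two regions gives the claimed equivalence.

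Next I would deduce the formal consequences. By the comparison, $L^{2}(M,E_{i};g,\rho_{i})$ and $L^{2}(M,E_{i};g',\rho_{i}')$ are the same space of (classes of) locally-$L^{2}$ sections of $E_{i}$, and the identity on $C^{\infty}_{c}(M,E_{i})$ extends to a bounded invertible operator $\iota_{i}$ between the two completions. The distributional action of the fixed differential operator $P_{i}$ on a locally-$L^{2}$ section is intrinsic, and the condition $P_{i}\omega\in L^{2}$ depends only on the equivalence class of the norm; hence $\iota_{i}$ carries $\mathcal{D}(P_{max,i})$ onto $\mathcal{D}(P_{max,i})$ and satisfies $\iota_{i+1}\circ P_{max,i}=P_{max,i}\circ\iota_{i}$. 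Since the two graph norms on $C^{\infty}_{c}(M,E_{i})$ are equivalent, $\iota_{i}$ also identifies the two minimal domains (graph closures of $C^{\infty}_{c}$) and commutes with $P_{min,i}$. Thus $\iota=(\iota_{0},\dots,\iota_{n})$ is an isomorphism of Hilbert complexes $(L^{2}(M,E_{\ast};g,\rho),P_{max/min,\ast})\cong(L^{2}(M,E_{\ast};g',\rho'),P_{max/min,\ast})$, inducing isomorphisms $H^{i}_{2,max/min}(M,E_{i})\cong H^{i}_{2,max/min}(M,E_{i})$ on cohomology.

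Finally, $T_{i}=\phi_{i}\circ f^{*}$ is defined purely at the level of smooth sections, so $\iota_{i}\circ T_{i}=T_{i}\circ\iota_{i}$ on $C^{\infty}_{c}(M,E_{i})$ and hence, by boundedness (Lemma \ref{sorano} applies to each metric choice separately), on all of $L^{2}$. Consequently the endomorphisms induced by $T$ on $H^{i}_{2,max/min}(M,E_{i})$ for the two choices of metrics are conjugate under the chain isomorphism $\iota$, so they have equal trace, and the alternating sums \eqref{amburgo}, \eqref{berlino} computed with respect to $(g,\rho)$ and $(g',\rho')$ agree. The only genuinely analytic point in the argument is the first step — the uniform comparability of the two $L^{2}$ norms up to the conical points — and it is precisely there that one uses that $g,g'$ are conic metrics (so the degeneration at the cone tip is the common factor $r^{n}$) and that the $\rho_{i},\rho_{i}'$ are non-degenerate up to the boundary; the rest of the proof is soft.
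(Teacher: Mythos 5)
Your route is essentially the paper's: show that any two admissible choices of $(g,\rho_0,\dots,\rho_n)$ yield equivalent $L^2$ norms, conclude that the maximal/minimal Hilbert complexes, their cohomologies and the endomorphisms induced by $T$ are carried into each other by the identity, and hence that the traces (and the alternating sums) agree. The difference is in how the norm comparison is obtained: the paper simply quotes that bundle metrics non-degenerate up to the boundary of the compact $\overline{M}$ are quasi-isometric and that any two conic metrics on $M$ are quasi-isometric (\cite{FB}, Proposition 9), while you prove the comparison by hand near the cone tips. That is where you should be more careful: writing $g=dr^2+r^2h(r)$ and $g'=dr^2+r^2h'(r)$ in the \emph{same} collar coordinate $(r,p)$ is not granted by the definition of a conic metric, since each metric comes with its own collar diffeomorphism $\chi$ realizing the normal form \eqref{pianello}; as written, your cancellation of the common factor $r^n$ presupposes a common radial variable. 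The repair is short -- the two radial coordinates are boundary defining functions for $\partial\overline{M}$, hence mutually comparable on a collar, so the two volume densities are still bounded by constant multiples of each other -- or one can simply invoke \cite{FB} Prop.~9 as the paper does. Note also that your argument compares only volume densities, which suffices here precisely because, in the setting fixed above Definition \ref{iconti}, the fibre metrics $\rho_i$ are chosen independently of $g$ and non-degenerate up to the boundary; the paper's stronger statement (quasi-isometry of the metrics themselves) is what one would need if the bundle metrics were induced by $g$, as happens for the de Rham complex used later. Apart from this coordinate issue, the soft part of your proof -- transport of $\mathcal{D}(P_{max,i})$ via the intrinsic distributional action of $P_i$, identification of the minimal domains through equivalent graph norms, commutation of the identification with $T_i$, conjugacy of the induced maps and equality of traces -- is correct and is a more detailed rendering of the paper's argument.
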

\begin{proof}
By the fact that $\overline{M}$ is compact and that, as explained above Definition \ref{iconti}, $(E_i,\rho_i)$ are defined over all $\overline{M}$ and $\rho_i$ is non degenerate up to the boundary,  it follows that all the metrics  we  consider on $E_i$ are quasi-isometric. Moreover, using \cite{FB} Proposition 9, it follows that if $g$ and $g'$ are two conic metric over $M$ then they are quasi-isometric, that is there exists a positive real number $c$ such that  $g'\leq g\leq g'$. Therefore, for each $i=0,..,n$, $L^2(M,E_i)$ doesn't depend on the metric that we fix on $E_i$ and on the conic metric that we fix over $M$. This in turn implies that same conclusion holds for $H^i_{2,max}(M,E_i)$ and for $H^i_{2,min}(M,E_i)$, that is they  do not depend on the metric that we fix on $E_i$ and on the conic metric that we fix over $M$. In this way  we can conclude that also the traces of $T_i^*:H^i_{2,max}(M,E_*)\rightarrow H^i_{2,max}(M,E_*)$ and  $T_i^*:H^i_{2,min}(M,E_*)\rightarrow H^i_{2,min}(M,E_*)$  satisfy the same property and so the proposition is proved.
\end{proof}

\begin{itemize}
\item From the above proposition it follows that in order to calculate $L_{2,max/min}(T)$ we can use any conic metric $g$ on $M$ and any metrics $\rho_0,...,\rho_n$ over $E_0,...,E_n$. Therefore, in the remaining part of this section, we make the following assumptions: for each singular point $q$ there exists $U_q$, an open neighborhood of $q$ satisfying $U_q\cong C_2(L_q)$,  such that on  $reg(C_2(L_q))$    the conic metric $g$ satisfies $g=dr^2+r^2h$ where $h$  is any riemannian metric over $L_q$ that does not depend on $r$. Moreover we assume that  each metric $\rho_i$ on $E_i$ does not depend on $r$ in a collar neighborhood of $\partial \overline{M}$.
\end{itemize}

Consider, for each $i=0,...,n$, the operator $$\mathcal{P}_i:=P_i^t\circ P_i+P_{i-1}\circ P_{i-1}^t:C^{\infty}_c(M,E_i)\rightarrow C^{\infty}_c(M,E_i).$$ It is clearly a positive operator. As stated in Proposition \ref{lucca}, we know that $\mathcal{P}_i$ is an elliptic differential cone operator. Therefore, by Theorem \ref{gualdo}, we know that for each positive self-adjoint extension of $\mathcal{P}_i$, the relative heat operator is a trace-class operator. In particular this is true for  $\mathcal{P}_{abs,i}$ that we recall it is defined as  $P_{min,i}^t\circ P_{max,i}+P_{max,i-1}\circ P_{min,i-1}^t$ and for $\mathcal{P}_{rel,i}$ that it is defined as  $P_{max,i}^t\circ P_{min,i}+P_{min,i-1}\circ P_{max,i-1}^t$.
 \  A well known and basic result of  operators theory (see \cite{JR}, Prop. 8.8) says that, given an Hilbert space $H$, the space of trace-class operators is a two sided ideal of $\mathcal{B}(H)$, the space of bounded operators of $H$, and that the trace doesn't depend on the order of composition. In this way we know that for each $i=0,...,n$ $$T_i\circ e^{-t\mathcal{P}_{abs/rel, i}}:L^2(M,E_i)\rightarrow L^2(M,E_i)$$ are  trace-class operator and that $\Tr(T_i\circ e^{-t\mathcal{P}_{abs/rel, i}})= \Tr(e^{-t\mathcal{P}_{abs/rel, i}}\circ T_i$)\footnote{This is the reason because we need  to require that $f:\overline{M}\rightarrow \overline{M}$ is a diffeomorphism. In this way each $T_i:L^2(M,E_i)\rightarrow L^2(M,E_i)$ is bounded and so we can conclude that $T_i\circ e^{-t\mathcal{P}_{abs/rel, i}}$ is a trace-class operator }. Moreover it is clear that $T_i\circ e^{-t\mathcal{P}_{abs/rel, i}}$ are operators with smooth kernel given by 
\begin{equation}
\label{tebe}
\phi_i\circ k_{abs,i}(t,f(x),y)\ \text{for}\ T_i\circ e^{-t\mathcal{P}_{abs, i}}
\end{equation}
 and analogously 
\begin{equation}
\label{ankara}
\phi_i\circ k_{rel,i}(t,f(x),y)\ \text{for}\ T_i\circ e^{-t\mathcal{P}_{rel, i}}
\end{equation}
where $k_{abs/rel,i}(t,x,y)$ are respectively the smooth kernel of $e^{-tP_{abs/rel,i}}$. In both the expressions above $\phi_i$  acts on the $x$ variable of $k_{abs/rel,i}(t,f(x),y)$ because $k_{abs/rel,i}(t,f(x),y)$ is a section of $f^*E_i\boxtimes E_i^*$  and $\phi_i:f^*E_i\rightarrow E_i$ is a morphism of bundle. So the kernels $\phi_i\circ k_{abs/rel,i}(t,f(x),y)$ are well defined and they are  smooth sections of $E\boxtimes E^*.$ \\
Now we are in position to state the following theorem which is one of the main results of this section:
\begin{teo}
\label{santiago}
Consider an elliptic complex of differential cone operators as in \eqref{catania} and let $T$ be a geometric endomorphism as in Definition \ref{velletri}. Then for each $t$:
\begin{equation}
\label{calcutta}
L_{2,max}(T)=\sum_{i=0}^n(-1)^i\Tr(T_ie^{-t\mathcal{P}_{abs,i}})
\end{equation}
and analogously 
\begin{equation}
\label{castravetera}
L_{2,min}(T)=\sum_{i=0}^n(-1)^i\Tr(T_ie^{-t\mathcal{P}_{rel,i}})
\end{equation}
In particular, in both the equalities, the member on the right hand side  does not  depend on $t$.
\end{teo}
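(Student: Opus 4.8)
## Proof Proposal

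The plan is to establish the identity \eqref{calcutta} (and \eqref{castravetera} by the identical argument) via the classical McKean--Singer style computation, adapted to the Hilbert complex setting. The key point is that all the operators involved --- the $T_i$, the heat operators $e^{-t\mathcal{P}_{abs/rel,i}}$, and their compositions --- are well-defined on the relevant Hilbert spaces, and that the Hodge/Kodaira theory recalled in Section~1 lets us interpret the supertrace cohomologically.

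\textbf{Step 1: Spectral decomposition of the heat operator.} By Theorem~\ref{gualdo}, for each $i$ the operator $\mathcal{P}_{abs,i}$ is discrete and self-adjoint, so $L^2(M,E_i)$ decomposes as an orthogonal Hilbert-space direct sum $\bigoplus_{\lambda\geq 0} V_{i,\lambda}$ of finite-dimensional eigenspaces, and $e^{-t\mathcal{P}_{abs,i}}=\sum_{\lambda}e^{-t\lambda}\Pi_{i,\lambda}$ where $\Pi_{i,\lambda}$ is the (finite-rank) orthogonal projection onto $V_{i,\lambda}$. Since $e^{-t\mathcal{P}_{abs,i}}$ is trace-class and $T_i$ is bounded (Lemma~\ref{sorano}), $T_i\circ e^{-t\mathcal{P}_{abs,i}}$ is trace-class and
$$\Tr(T_i\, e^{-t\mathcal{P}_{abs,i}})=\sum_{\lambda\geq 0}e^{-t\lambda}\Tr(T_i|_{V_{i,\lambda}}),$$
the absolute convergence being guaranteed by the trace-class property together with $\|T_i\|<\infty$.

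\textbf{Step 2: The positive eigenspaces cancel in the alternating sum.} The crucial algebraic input is that $T$ commutes with $P_{max/min,i}$ and hence (passing to adjoints, exactly as in the proof of Proposition~\ref{sanrocco}) with $P^t$, and therefore with each $\mathcal{P}_{abs,i}$; consequently $T_i$ preserves each eigenspace $V_{i,\lambda}$. For a fixed $\lambda>0$ one forms the finite-dimensional complex obtained by restricting $P_{max,i}$ to the $\lambda$-eigenspaces. Because $\mathcal{P}_{abs,i}=P_{min,i}^t P_{max,i}+P_{max,i-1}P_{min,i-1}^t$ acts as $\lambda\,\mathrm{Id}$ there, this finite complex is exact (the standard Hodge-theoretic argument: $\lambda^{-1}P^t_{min,i}$ on the appropriate summand provides a contracting homotopy, using Proposition~\ref{beibei} to split off the null part). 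Since $T$ is an endomorphism of this finite exact complex, the Hopf-trace (Euler--Poincar\'e) lemma gives $\sum_{i}(-1)^i\Tr(T_i|_{V_{i,\lambda}})=0$ for every $\lambda>0$. Hence
$$\sum_{i=0}^n(-1)^i\Tr(T_i\, e^{-t\mathcal{P}_{abs,i}})=\sum_{i=0}^n(-1)^i\Tr(T_i|_{V_{i,0}}),$$
which is manifestly independent of $t$.

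\textbf{Step 3: Identify the $\lambda=0$ term with $L_{2,max}(T)$.} By \eqref{said} and the discussion after \eqref{kkll}, $V_{i,0}=\ker\mathcal{P}_{abs,i}=\mathcal{H}^i_{abs}(M,E_i)$, and by Corollary~\ref{icordelli} the complex $(L^2(M,E_*),P_{max,*})$ is Fredholm, so Proposition~\ref{topoamotore} (together with \eqref{pppp}) gives a $T$-equivariant isomorphism $\mathcal{H}^i_{abs}(M,E_i)\cong H^i_{2,max}(M,E_i)$. One must check this isomorphism intertwines $T_i|_{V_{i,0}}$ with the induced map $T_i^*$ on cohomology: this follows because $T_i$ commutes with $P_{max,i}$ and with the orthogonal projection onto harmonics (the latter by Step~2's commutation of $T_i$ with $\mathcal{P}_{abs,i}$, hence with its spectral projections). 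Therefore $\Tr(T_i|_{V_{i,0}})=\tr(T_i^*:H^i_{2,max}(M,E_i)\to H^i_{2,max}(M,E_i))$, and summing with signs yields \eqref{calcutta}. Replacing $abs$ by $rel$, $max$ by $min$, and using the corresponding statements from Section~1 gives \eqref{castravetera}.

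\textbf{Main obstacle.} The routine parts are the spectral decomposition and the Hopf-trace cancellation; the genuine care is needed in two places. First, one must make sure the finite-dimensional complex formed from a single eigenspace is genuinely exact --- this requires the weak Kodaira decomposition of Proposition~\ref{beibei} together with the closed-range statement of Proposition~\ref{topoamotore}, rather than a naive ``$\Delta=\lambda$ implies exact'' assertion, since the three summands $\mathcal{H}$, $\overline{ran(P)}$, $\overline{ran(P^t)}$ must be disentangled and $T$ must be shown to respect the splitting. Second, one must verify that the abstract Hodge isomorphism $\mathcal{H}^i_{abs}\cong H^i_{2,max}$ is natural with respect to $T$; this is where the boundedness of $T_i$ and its commutation with $\mathcal{P}_{abs,i}$ (so that it commutes with the spectral projection onto $\ker\mathcal{P}_{abs,i}$) are essential, and it is the step most easily glossed over.
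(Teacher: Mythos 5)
There is a genuine gap, and it sits exactly at the ``crucial algebraic input'' of your Step 2. From $T_{i+1}\circ P_{max,i}=P_{max,i}\circ T_i$ passing to adjoints gives $P^t_{min,i}\circ T_{i+1}^{*}=T_i^{*}\circ P^t_{min,i}$: it is $T^{*}$, not $T$, that intertwines with the formal-adjoint differentials. A geometric endomorphism $T_i=\phi_i\circ f^{*}$ with $f$ not an isometry does \emph{not} commute with $P^t_{min,i}$, hence not with $\mathcal{P}_{abs,i}=P^t_{min,i}P_{max,i}+P_{max,i-1}P^t_{min,i-1}$, and therefore it neither preserves the eigenspaces $V_{i,\lambda}$ nor commutes with the spectral projections. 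Consequently the expression $\Tr(T_i|_{V_{i,\lambda}})$ in your Step 1 is not even defined (the restriction is not an endomorphism of $V_{i,\lambda}$), the application of the Hopf--trace lemma to ``$T$ acting on the eigencomplex'' in Step 2 does not apply as stated, and the claim in Step 3 that $T_i$ commutes with the projection onto harmonics (which you use to identify $\Tr(T_i|_{V_{i,0}})$ with $\Tr(T_i^{*})$) is false in general. This is precisely the point where the classical Atiyah--Bott argument, and the paper, must be more careful.

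The repair, which is the route the paper takes, is to work throughout with the \emph{compressions} $\pi(i,\lambda_j)\circ T_i$ rather than with restrictions of $T_i$: one has $\Tr(T_i\circ e^{-t\mathcal{P}_{abs,i}})=\sum_j e^{-t\lambda_j}\Tr(\pi(i,\lambda_j)\circ T_i)$, and since the spectral projections of the Hilbert-complex Laplacians commute with $P_{max,i}$ (a property of the complex, needing nothing from $T$) while $T$ commutes with $P_{max,i}$, the maps $\pi(i,\lambda_j)\circ T_i$ do form a chain endomorphism of the finite acyclic eigencomplex (the paper's Lemma \ref{manaus}), so the Hopf--trace lemma (Lemma \ref{madrid}) kills every $\lambda_j\neq 0$. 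For $\lambda=0$ no commutation with the harmonic projection is needed: since $T_i$ preserves $Ker(P_{max,i})$ and, in the Fredholm case, $Ker(P_{max,i})=\mathcal{H}^i_{abs}\oplus ran(P_{max,i-1})$ with closed range, the Hodge isomorphism $\gamma:\mathcal{H}^i_{abs}\rightarrow H^i_{2,max}$ conjugates $\pi(i,0)\circ T_i$ into the induced map on cohomology, $T_i^{*}=\gamma\circ\pi(i,0)\circ T_i\circ\gamma^{-1}$, whence $\Tr(\pi(i,0)\circ T_i)=\Tr(T_i^{*})$ (the paper's Lemma \ref{malecchie}). With these two substitutions your outline becomes the paper's proof; as written, however, the two commutation claims on which Steps 2 and 3 rest do not hold.
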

We need to state  some propositions in order to prove the above theorem. We give the proof only for the complex $(L^2(M,E_i),P_{max,i})$. The other one is completely analogous.
\begin{lemma}
\label{malecchie}
Consider an abstract Fredholm complex as in \eqref{mm} and let $T$ be an endomorphism of this complex, that is $T=(T_0,...,T_n)$, for each $i=0,...,n$ $T_i:H_i\rightarrow H_i$ is bounded and $D_i\circ T_i=T_{i+1}\circ D_i$ on $\mathcal{D}(D_i)$. Let $\pi_i:H_i\rightarrow \mathcal{H}_i(H_*,D_*)$  be the orthogonal projection induced by the Kodaira  decomposition of Proposition \ref{beibei}. Then for each $i=0,..,n$ we have $$\Tr(\pi_i\circ T_i:\mathcal{H}^i(H_*,D_*)\rightarrow \mathcal{H}^i(H_*,D_*))=\Tr(T_i^*:H^i(H_*,D_*)\rightarrow H^i(H_*,D_*))$$
\end{lemma}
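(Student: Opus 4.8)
The plan is to reduce the statement to the well-known algebraic fact that, for a finite complex of finite-dimensional vector spaces equipped with an endomorphism, the alternating (or rather individual) traces on cohomology can be computed from the harmonic representatives, together with the observation that $T_i$ interacts nicely with the weak Kodaira decomposition. First I would fix $i$ and recall from Proposition \ref{beibei} that $H_i = \mathcal{H}^i \oplus \overline{ran(D_{i-1})} \oplus \overline{ran(D_i^*)}$, and from Proposition \ref{topoamotore} (since the complex is Fredholm, hence $ran(D_{i-1})$ is closed) that $H^i(H_*,D_*) \cong \mathcal{H}^i(H_*,D_*)$ via the map sending a harmonic element to its cohomology class. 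The key structural point to establish is that the closed subspace $K_i := Ker(D_i)$ is $T_i$-invariant and that $\overline{ran(D_{i-1})}$ is $T_i$-invariant: the first follows from $D_i T_i = T_{i+1} D_i$ on $\mathcal{D}(D_i)$ (so $D_i T_i s = T_{i+1} D_i s = 0$ for $s \in K_i$), and the second follows because $T_i$ is bounded and maps $ran(D_{i-1})$ into itself (again by the intertwining relation $T_i D_{i-1} = D_{i-1} T_{i-1}$), hence maps its closure into its closure.

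Next I would use these invariances to compute the induced map on cohomology two ways. On one hand, $T_i^* : H^i(H_*,D_*) \to H^i(H_*,D_*)$ is by definition the map induced on $K_i / \overline{ran(D_{i-1})}$ by $T_i|_{K_i}$. On the other hand, under the isomorphism $\mathcal{H}^i \cong K_i/\overline{ran(D_{i-1})}$ (the harmonic space is a complement to $\overline{ran(D_{i-1})}$ inside $K_i$, by Proposition \ref{beibei}), the endomorphism $\pi_i \circ T_i|_{\mathcal{H}^i}$ corresponds precisely to the map induced by $T_i|_{K_i}$ on the quotient: indeed for $h \in \mathcal{H}^i$, write $T_i h = h' + b$ with $h' \in \mathcal{H}^i$, $b \in \overline{ran(D_{i-1})}$ (possible since $T_i h \in K_i$ and $K_i = \mathcal{H}^i \oplus \overline{ran(D_{i-1})}$), so that $\pi_i T_i h = h'$ and $[T_i h] = [h']$ in the quotient. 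Thus $\pi_i \circ T_i|_{\mathcal{H}^i}$ and $T_i^*$ are conjugate endomorphisms of isomorphic finite-dimensional vector spaces, and therefore have the same trace. Since everything in sight is finite-dimensional (the complex is Fredholm), the traces are well-defined numbers and equality follows.

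The main obstacle I anticipate is purely bookkeeping rather than conceptual: one must be careful that the decomposition $K_i = \mathcal{H}^i \oplus \overline{ran(D_{i-1})}$ is genuinely $T_i$-compatible in the sense needed — $T_i$ need not preserve the $\mathcal{H}^i$ summand (it preserves $K_i$ and $\overline{ran(D_{i-1})}$, but not necessarily the orthogonal complement), which is exactly why the right object to compare $T_i^*$ with is $\pi_i \circ T_i$ and not $T_i$ restricted to $\mathcal{H}^i$ without projection. Once this is kept straight, the argument is the standard "harmonic representatives compute cohomology" lemma, and no analysis beyond the finite-dimensionality guaranteed by Corollary \ref{icordelli} / Proposition \ref{topoamotore} is required; in particular the heat-kernel machinery plays no role here.
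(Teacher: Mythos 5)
Your proposal is correct and follows essentially the same route as the paper: the paper's proof simply invokes the isomorphism $\gamma:\mathcal{H}^i(H_*,D_*)\rightarrow H^i(H_*,D_*)$ coming from the Fredholm property and asserts $T_i^*=\gamma\circ\pi_i\circ T_i\circ\gamma^{-1}$, concluding by invariance of the trace under conjugation. What you add — the $T_i$-invariance of $Ker(D_i)$ and of $ran(D_{i-1})$ (closed by Proposition \ref{topoamotore}), and the verification that $\pi_i\circ T_i$ on harmonics corresponds to the induced map on the quotient $Ker(D_i)/ran(D_{i-1})$ — is exactly the detail the paper leaves as ``it is clear''.
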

\begin{proof}
Let $\gamma:\mathcal{H}^i(H_*,D_*)\rightarrow H^i(H_*,D_*)$ the isomorphism of \eqref{rabat}. Then it is clear that $T_i^*$, that is the endomorphism of $H^i(H_*,D_*)$ induced by $T_i$, satisfies $T_i^*=\gamma\circ \pi_i\circ T_i\circ \gamma^{-1}$. Now from this it follows immediately that $\Tr(\pi_i\circ T_i:\mathcal{H}^i(H_*,D_*)\rightarrow \mathcal{H}^i(H_*,D_*))=\Tr(T_i^*:H^i(H_*,D_*)\rightarrow H^i(H_*,D_*))$.
\end{proof}

\begin{lemma}
\label{manaus}
We have the following properties.
\begin{enumerate}
\item Let $E_{i}(\lambda)$ be the eigenspace relative to $\mathcal{P}_{abs,i}$ and the eigenvalue $\lambda$. Then $E_{i}(\lambda)$ is finite dimensional and made of  eigensections which are smooth in the interior. 
\item For each  $\lambda\neq 0$ consider the following complex:
\begin{equation}
\label{madras}
......\stackrel{P^{\lambda}_{max,i-1}}{\rightarrow}E_{i}(\lambda)\stackrel{P^{\lambda}_{max,i}}{\rightarrow}E_{i+1}(\lambda)\stackrel{P^{\lambda}_{max,i+1}}{\rightarrow}E_{i+2}(\lambda)\stackrel{P^{\lambda}_{max,i+2}}{\rightarrow}...
\end{equation}
\end{enumerate}
where $P^{\lambda}_{max,i}:=P_{max,i}|_{E_i(\lambda)}$.   Then it is an acyclic complex.
\end{lemma}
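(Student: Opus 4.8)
The plan is to treat the two assertions in turn, exploiting that $\mathcal{P}_{abs,i}$ is a positive self-adjoint extension of the positive elliptic differential cone operator $\mathcal{P}_i=P_i^t\circ P_i+P_{i-1}\circ P_{i-1}^t$ (Proposition \ref{lucca}) and that, by Theorem \ref{gualdo}, its heat operator $e^{-t\mathcal{P}_{abs,i}}$ is trace-class, so that $\mathcal{P}_{abs,i}$ has discrete spectrum with finite-dimensional eigenspaces. The first assertion is then immediate: $E_i(\lambda)$ is finite-dimensional, and smoothness of the eigensections in the interior follows from interior elliptic regularity, since $\mathcal{P}_{abs,i}u=\lambda u$ implies $\mathcal{P}_i u=\lambda u$ distributionally on $M$ and $\mathcal{P}_i$ is elliptic on $M$ in the classical sense (equivalently, $u=e^{t\lambda}e^{-t\mathcal{P}_{abs,i}}u$ and the heat kernel is smooth in the interior by Theorem \ref{casale}). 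Since $\mathcal{P}_{abs,i}^l u=\lambda^l u$ for all $l$, each eigensection lies in $\mathcal{D}^\infty(L^2(M,E_i))$ in the sense of Proposition \ref{dfff}; I will use this to avoid any further fuss about domains.

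For the second assertion I would first verify that the differentials of \eqref{madras} are well defined, i.e. that $P_{max,i}$ sends $E_i(\lambda)$ into $E_{i+1}(\lambda)$. Recall that $(L^2(M,E_*),P_{max,*})$ is a Hilbert complex with adjoint $(L^2(M,E_*),P_{min,*}^t)$, so $P_{max,i+1}\circ P_{max,i}=0$ and $P_{min,i}^t\circ P_{min,i+1}^t=0$. Applying $P_{max,i}$ to the eigenvalue identity $\lambda u=P_{min,i}^tP_{max,i}u+P_{max,i-1}P_{min,i-1}^t u$ kills the last term and gives $P_{max,i}P_{min,i}^tP_{max,i}u=\lambda P_{max,i}u$; since in addition $P_{min,i+1}^tP_{max,i+1}(P_{max,i}u)=0$, we obtain $\mathcal{P}_{abs,i+1}(P_{max,i}u)=\lambda P_{max,i}u$, that is $P_{max,i}u\in E_{i+1}(\lambda)$. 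The symmetric computation (interchanging the complex and its adjoint) shows $P_{min,i}^t$ sends $E_{i+1}(\lambda)$ into $E_i(\lambda)$; on these finite-dimensional spaces $P_{min,i}^t|_{E_{i+1}(\lambda)}$ is the $L^2$-adjoint of $P^\lambda_{max,i}:=P_{max,i}|_{E_i(\lambda)}$, and $\mathcal{P}_{abs,i}$ acts as $\lambda\cdot\mathrm{Id}$ on $E_i(\lambda)$.

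Acyclicity then follows by the standard Hodge argument, where $\lambda\neq 0$ enters decisively. The relation $P^\lambda_{max,i}\circ P^\lambda_{max,i-1}=0$ is inherited from the Hilbert complex, so it suffices to show $\ker P^\lambda_{max,i}\subseteq\mathrm{ran}\,P^\lambda_{max,i-1}$: if $u\in E_i(\lambda)$ and $P_{max,i}u=0$, then
\[
\lambda u=\mathcal{P}_{abs,i}u=P_{min,i}^tP_{max,i}u+P_{max,i-1}P_{min,i-1}^t u=P_{max,i-1}\big(P_{min,i-1}^t u\big),
\]
whence $u=P_{max,i-1}\big(\lambda^{-1}P_{min,i-1}^t u\big)$ with $\lambda^{-1}P_{min,i-1}^t u\in E_{i-1}(\lambda)$, so $u\in\mathrm{ran}\,P^\lambda_{max,i-1}$. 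This gives exactness at every spot of \eqref{madras}. The only genuinely delicate point is the domain bookkeeping in the commutation steps, and this is where I expect to spend most effort; it is handled once and for all by the remark that all eigensections lie in $\mathcal{D}^\infty$, on which $P_{max,*}$, $P_{min,*}^t$ and their compositions behave as on a core subcomplex, so everything else reduces to elementary linear algebra. For the complex $(L^2(M,E_*),P_{min,*})$ the proof is identical with $\mathcal{P}_{rel,i}$ replacing $\mathcal{P}_{abs,i}$.
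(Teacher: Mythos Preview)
Your proof is correct and follows essentially the same route as the paper: finite dimensionality from the trace-class property of the heat operator (the paper also invokes the Fredholm property separately for $\lambda=0$, but your argument covers this too), interior smoothness from elliptic regularity, and acyclicity from the identity $\lambda u=P_{max,i-1}(P_{min,i-1}^t u)$ when $P_{max,i}u=0$. The only difference is that you take more care than the paper does in verifying that $P_{max,i}$ actually maps $E_i(\lambda)$ into $E_{i+1}(\lambda)$ and in handling the domain bookkeeping via $\mathcal{D}^\infty$; the paper leaves these implicit.
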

\begin{proof}
Consider the eigenspaces $E_{i}(\lambda)$. That is finite dimensional for each $\lambda\neq 0$ follows by the fact that $e^{-t\mathcal{P}_{abs,i}}$ is a trace-class operator while that it is finite dimensional for $\lambda=0$ follows by the fact that $\mathcal{P}_{abs,i}$ is  a Fredholm operator on its domain endowed with the graph norm. Moreover elliptic regularity tells us that $E_{i}(\lambda)$ is made of  eigensections which are smooth in the interior.  Finally, given $\lambda>0, $ consider 
\begin{equation}
\label{xanten}
......\stackrel{P^{\lambda}_{max,i-1}}{\rightarrow}E_{i}(\lambda)\stackrel{P^{\lambda}_{max,i}}{\rightarrow}E_{i+1}(\lambda)\stackrel{P^{\lambda}_{max,i+1}}{\rightarrow}E_{i+2}(\lambda)\stackrel{P^{\lambda}_{max,i+2}}{\rightarrow}...
\end{equation}
where $P^{\lambda}_{max,i}:=P_{max,i}|_{E_i(\lambda)}$. \\Let $s\in Ker(P_{max,i})$. Then $\mathcal{P}_{abs,i}(s)=\lambda s=P_{max,i-1}(P^t_{min}(s))$. Therefore $s\in ran(P_{max,i-1})$ and this implies that \eqref{xanten} is a long exact sequences, or in other words, it is an acyclic complex.
\end{proof}
Now we state the last result we need to prove Theorem \ref{santiago}. We take it from \cite{ABL1}.
\begin{lemma}
\label{madrid}
Consider a complex of finite dimensional vector space
\begin{equation}
\label{gerona}
0\rightarrow V_0\stackrel{f_{0}}{\rightarrow}...\stackrel{f_{i-1}}{\rightarrow}V_{i}\stackrel{f_{i}}{\rightarrow}V_{i+1}\stackrel{f_{,i+1}}{\rightarrow}V_{i+2}\stackrel{f_{i+2}}{\rightarrow}...\stackrel{f_{n-1}}{\rightarrow}V_n\stackrel{f_{n}}{\rightarrow}0.
\end{equation}
and for each $i$ let $G_i:V_i\rightarrow V_i$ an endomorphism such that $f_i\circ G_i=G_{i+1}\circ f_i$. Then $$\sum_{i=0}^n(-1)^i\Tr(G_i)=\sum_{i=0}^n(-1)^i\Tr(G_i^*)$$ where $G_{i}^*$ is the endomorphism of the $i-$th cohomology group of the complex \eqref{gerona} induced by $G_i$.
\end{lemma}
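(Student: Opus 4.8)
The plan is to reduce the statement to the elementary additivity of the trace along invariant filtrations and then to exploit a telescoping cancellation in the alternating sum; this is a purely algebraic fact (a form of the Hopf trace formula), so no analytic input is needed, and I expect the only fussy point to be the bookkeeping at the two ends of the complex.

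First I would set, for each $i$, the spaces of coboundaries $B_i := \mathrm{ran}(f_{i-1})$ and cocycles $Z_i := \ker(f_i)$, so that $B_i \subseteq Z_i \subseteq V_i$ and $H^i = Z_i/B_i$ is the $i$-th cohomology of \eqref{gerona}. Because $f_i \circ G_i = G_{i+1}\circ f_i$, the map $G_i$ preserves $Z_i$ (since $f_i(G_i z) = G_{i+1}(f_i z) = 0$ for $z \in Z_i$) and $B_i$ (since $G_i(f_{i-1}v) = f_{i-1}(G_{i-1}v)$), hence it descends to an endomorphism of $Z_i/B_i = H^i$, which is by definition the map $G_i^*$ in the statement. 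The key linear-algebra input I would invoke is that if $W \subseteq V$ is invariant under an endomorphism $A$ of a finite-dimensional space $V$, then completing a basis of $W$ to a basis of $V$ makes the matrix of $A$ block upper-triangular, so that $\Tr(A) = \Tr(A|_W) + \Tr(\bar A)$, where $\bar A$ is the endomorphism induced by $A$ on $V/W$.

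Then I would apply this twice to the $G_i$-invariant filtration $B_i \subseteq Z_i \subseteq V_i$, obtaining
\begin{equation}
\label{hopftrsplit}
\Tr(G_i) = \Tr(G_i|_{B_i}) + \Tr(G_i^*) + \Tr(\overline{G_i}),
\end{equation}
where $\overline{G_i}$ denotes the endomorphism induced by $G_i$ on $V_i/Z_i$. Since $f_i$ factors through an isomorphism $\bar f_i : V_i/Z_i \xrightarrow{\,\sim\,} \mathrm{ran}(f_i) = B_{i+1}$, and the relation $f_i \circ G_i = G_{i+1}\circ f_i$ shows that $\bar f_i$ intertwines $\overline{G_i}$ with $G_{i+1}|_{B_{i+1}}$, we get $\Tr(\overline{G_i}) = \Tr(G_{i+1}|_{B_{i+1}})$.

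Finally I would sum \eqref{hopftrsplit} over $i=0,\dots,n$ with alternating signs. Writing $b_i := \Tr(G_i|_{B_i})$ and noting that $B_0 = \mathrm{ran}(f_{-1}) = 0$ and $B_{n+1} = \mathrm{ran}(f_n) = 0$, so that $b_0 = b_{n+1} = 0$, this gives
\begin{equation}
\sum_{i=0}^n (-1)^i \Tr(G_i) = \sum_{i=0}^n (-1)^i b_i + \sum_{i=0}^n (-1)^i \Tr(G_i^*) + \sum_{i=0}^n (-1)^i b_{i+1}.
\end{equation}
Reindexing $j = i+1$ in the last sum turns it into $-\sum_{j=0}^{n} (-1)^j b_j$ (using $b_0 = b_{n+1} = 0$), which cancels the first sum, leaving exactly $\sum_{i=0}^n (-1)^i \Tr(G_i) = \sum_{i=0}^n (-1)^i \Tr(G_i^*)$, as claimed. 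The only point requiring care is checking that the filtration-trace terms $b_i$ really telescope and vanish at the ends of the complex; beyond that there is no genuine obstacle.
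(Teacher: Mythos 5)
Your proof is correct and complete: the additivity of the trace along the $G_i$-invariant filtration $B_i\subseteq Z_i\subseteq V_i$, the identification $\Tr(\overline{G_i})=\Tr(G_{i+1}|_{B_{i+1}})$ via the isomorphism $V_i/Z_i\cong \mathrm{ran}(f_i)$, and the telescoping of the boundary terms (with $b_0=b_{n+1}=0$) are all handled properly. The paper itself gives no argument for this lemma, deferring entirely to the citation of Atiyah--Bott; your write-up is exactly the standard Hopf-trace-formula argument that underlies that reference, so it supplies the omitted details rather than taking a genuinely different route.
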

\begin{proof}
See \cite{ABL1}.
\end{proof}

\begin{proof} (of Theorem \ref{santiago}). As said above we give the proof only for \eqref{calcutta}. The proof for  \eqref{castravetera}  is completely analogous. Consider the heat operator $e^{-t\mathcal{P}_{abs,i}}:L^2(M,E_i)\rightarrow L^2(M,E_i)$. By the third point of Theorem \ref{gualdo} it follows  that there exists an Hilbert base of $L^2(M,E_i)$, $\{\phi_{j}\}_{j\in \mathbb{N}}$, made of smooth  eigensections of $\mathcal{P}_{abs,i}$, in such way the smooth kernel of $e^{-t\mathcal{P}_{abs,i}}$ satisfies $k(t,x,y)=\sum_{j}e^{-t\lambda_j}\phi_j(x)\boxtimes \phi_j^*(y)$. Moreover, by the fact that $T_i:L^2(M,E_i)\rightarrow L^2(M,E_i)$ is bounded, we know that $T_{i}\circ e^{-t\mathcal{P}_{abs,i}}$ and $e^{-t\mathcal{P}_{abs,i}}\circ T_i$ are trace class and that $\Tr(T_{i}\circ e^{-t\mathcal{P}_{abs,i}})=\Tr(e^{-t\mathcal{P}_{abs,i}}\circ T_i)$. Now, if we label $\pi(i,\lambda_j)$ the orthogonal projection $\pi(i,\lambda_j):L^2(M,E_i)\rightarrow E_{i}(\lambda_j)$, then we can write $e^{-t\mathcal{P}_{abs,i}}=\sum_{j}e^{-t\lambda_j}\pi(i,\lambda_j)$ and therefore $e^{-t\mathcal{P}_{abs,i}}\circ T_i=(\sum_{j}e^{-t\lambda_j}\pi(i,\lambda_j))\circ T_i=\sum_{j}e^{-t\lambda_j}(\pi(i,\lambda_j)\circ T_i)$. In this way we get  
\begin{equation}
\label{sassari}
\Tr(T_{i}\circ e^{-t\mathcal{P}_{abs,i}})=\Tr(e^{-t\mathcal{P}_{abs,i}}\circ T_i)=\sum_{j}e^{-t\lambda_j}\Tr((\pi(i,\lambda_j)\circ T_i)).
\end{equation}
Consider $\sum_{i=0}^n(-1)^i\Tr(T_i\circ e^{-t\mathcal{P}_{abs,i}})$. Then $\sum_{i=0}^n(-1)^i\Tr(T_i\circ e^{-t\mathcal{P}_{abs,i}})=$

 \begin{equation}
\label{nuoro}
=\sum_{i=0}^n(-1)^i\sum_je^{-t\lambda_j}\Tr((\pi(i,\lambda_j)\circ T_i))=\sum_je^{-t\lambda_j}\sum_{i=0}^n(-1)^i\Tr((\pi(i,\lambda_j)\circ T_i)).
\end{equation}
 Now examine carefully this last expression. Both $\pi(i,\lambda_j)\circ T_i:L^2(M,E_i)\rightarrow E_{i}(\lambda_j)$ and $\pi(i,\lambda_j):L^2(M,E_i)\rightarrow E_{i}(\lambda_j)$ are  trace-class operators. This implies that $\Tr(\pi(i,\lambda_j)\circ T_i)=\Tr(\pi(i,\lambda_j)\circ\pi(i,\lambda_j)\circ T_i)=\Tr(\pi(i,\lambda_j)\circ T_i\circ \pi(i,\lambda_j))$ and this last one is  equal to the trace of $\pi(i,\lambda_j)\circ T_i:E_{i}(\lambda_j)\rightarrow E_{i}(\lambda_j).$ But if  we take the following complex for $\lambda_j\neq 0$ 
\begin{equation}
\label{rovereto}
......\stackrel{P^{\lambda}_{max,i-1}}{\rightarrow}E_{i}(\lambda_j)\stackrel{P^{\lambda}_{max,i}}{\rightarrow}E_{i+1}(\lambda_j)\stackrel{P^{\lambda}_{max,i+1}}{\rightarrow}E_{i+2}(\lambda_j)\stackrel{P^{\lambda}_{max,i+2}}{\rightarrow}...
\end{equation}
we know that \eqref{rovereto} is an acyclic complex. Moreover it is immediate to check that  $\pi(i,\lambda_j)\circ T_i$ is an endomorphism of \eqref{rovereto} and therefore, applying Lemma \ref{gerona}, we can conclude that $\sum_{i=0}^n(-1)^i\Tr(\pi(i,\lambda_j)\circ T_i)=0$ for $\lambda_j\neq 0$. This leads to a relevant simplification of \eqref{nuoro}:
 \begin{equation}
\label{oristano}
\sum_{i=0}^n(-1)^i\Tr(T_ie^{-t\mathcal{P}_{abs,i}})=\sum_je^{-t\lambda_j}\sum_{i=0}^n(-1)^i\Tr(\pi(i,\lambda_j)\circ T_i)=\sum_{i=0}^n(-1)^i\Tr(\pi(i,0)\circ T_i).
\end{equation}
Finally, using Lemma \ref{malecchie}, it follows that $\Tr(\pi(i,0)\circ T_i)=\Tr(T_i^*)$ and therefore the theorem is proved.
\end{proof}

As an immediate consequence of Theorem \ref{santiago} we have the following corollary
\begin{cor}
\label{sandiego}
In the same assumptions of Theorem \ref{santiago} then
\begin{equation}
\label{lima}
L_{2,max}(T)=\lim_{t\rightarrow 0}\sum_{i=0}^n(-1)^i\Tr(T_i\circ e^{-t\mathcal{P}_{abs,i}})
\end{equation}
and analogously 
\begin{equation}
\label{brasilia}
L_{2,min}(T)=\lim_{t \rightarrow 0}\sum_{i=0}^n(-1)^i\Tr(T_i\circ e^{-t\mathcal{P}_{rel,i}})
\end{equation}
\end{cor}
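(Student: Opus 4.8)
The plan is to observe that this corollary is an immediate consequence of Theorem \ref{santiago}, and the only thing to do is to make the passage to the limit explicit. By Theorem \ref{santiago} we already know that for \emph{every} $t>0$,
\begin{equation*}
L_{2,max}(T)=\sum_{i=0}^n(-1)^i\Tr(T_i\circ e^{-t\mathcal{P}_{abs,i}}),\qquad L_{2,min}(T)=\sum_{i=0}^n(-1)^i\Tr(T_i\circ e^{-t\mathcal{P}_{rel,i}}),
\end{equation*}
and that in each case the right-hand side is in fact independent of $t$. First I would record that a function which is constant on $(0,\infty)$ trivially has a limit as $t\to 0^+$, and this limit coincides with its (constant) value; therefore
\begin{equation*}
\lim_{t\rightarrow 0}\sum_{i=0}^n(-1)^i\Tr(T_i\circ e^{-t\mathcal{P}_{abs,i}})=\sum_{i=0}^n(-1)^i\Tr(T_i\circ e^{-t\mathcal{P}_{abs,i}})=L_{2,max}(T)
\end{equation*}
for any fixed $t>0$, and identically with $\mathcal{P}_{rel,i}$ and $L_{2,min}(T)$ in place of $\mathcal{P}_{abs,i}$ and $L_{2,max}(T)$. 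This proves both \eqref{lima} and \eqref{brasilia}.

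Since there is essentially no obstacle, the only point worth double-checking is the logical dependency: the statement of the corollary must not be used to establish the $t$-independence in Theorem \ref{santiago}, and it is not — the $t$-independence there was obtained from the spectral decomposition of $e^{-t\mathcal{P}_{abs/rel,i}}$, the acyclicity of the positive-eigenvalue subcomplexes (Lemma \ref{manaus}), and Lemma \ref{madrid}, none of which refer to the $t\to 0$ limit. Hence the corollary adds nothing new beyond a convenient reformulation, but it is this reformulation that will be exploited in the subsequent sections, where the small-time asymptotics of the heat kernel of a differential cone operator (as recalled in Theorem \ref{casale} and Proposition \ref{saronno}) are used to localize the $L^2$-Lefschetz number at $Fix(f)$ and to extract the contributions of the conical points.
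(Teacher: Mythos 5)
Your proposal is correct and matches the paper, which states Corollary \ref{sandiego} as an immediate consequence of Theorem \ref{santiago} without further proof: since the right-hand side of \eqref{calcutta} and \eqref{castravetera} is constant in $t$, its limit as $t\rightarrow 0$ exists and equals $L_{2,max/min}(T)$. Your remark on the logical dependency (that the $t$-independence comes from the spectral argument in Theorem \ref{santiago}, not from the corollary) is accurate but not needed.
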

 Before to go ahead we add some comments to Theorem \ref{santiago}.
\begin{rem}
In the statement of Theorem \ref{santiago} we assume that the endomorphism $T$  satisfies Definition \ref{velletri}. But from the proof it is clear that the particular structure of the endomorphism, that is $T_i=\phi_i\circ f^*$ doesn't play any role. It is just a sufficient condition to assure that each  $T_i$ induces a bounded map acting on $L^2(M,E_i)$ and that $T$ is an endomorphism of $(L^{2}(M,E_i),P_{max/min,i})$. Therefore if we have a $n-$ tuple of map $T=(T_1,...,T_n)$ such that, for each $i=0,...,n$,   $T_i:L^{2}(M,E_i)\rightarrow L^{2}(M,E_i)$      is bounded and $T_{i+1}\circ P_{max/min,i}=P_{max/min,i}\circ T_i$ on $\mathcal{D}(P_{max/min,i})$ then we can state and prove Theorem \ref{santiago} in the same way.
\end{rem}

\begin{rem}

\label{marsiglia}
We stated  Theorem \ref{santiago} in the case of an elliptic complex of differential cone operators over a compact manifold with conical singularities. This is because, using the result coming from the theory of elliptic differential cone operators, we know that $(L^{2}(M,E_i),P_{max/min,i})$ are Fredholm complexes and that $e^{-t\mathcal{P}_{abs/rel,i}}$ are trace-class operators. Therefore it is  possible to define maximal and minimal $L^2-$Lefschetz numbers and to prove Theorem \ref{santiago}. A priori it is not possible to do the same for an arbitrary elliptic complex of differential operators over a (possible incomplete) riemannian manifold $(M,g).$ But it is clear that  if we know that the maximal and the minimal extension of our complex are Fredholm complexes and that for each $i$ the heat operator constructed from the i-th laplacian associated to the maximal/minimal  complex is a trace-class operator, then it is possible to state and prove in the same way  formulas \eqref{calcutta} and \eqref{castravetera} for the $L^2-$Lefschetz numbers associated to the maximal and minimal extension of our complex.
\end{rem}

We conclude the section with the following theorems:
\begin{teo}
\label{urbana}
Let $X$ be a compact manifold with conical singularities of dimension $m+1$ and let $g$ be a conic metric on $reg(X)=M$. Consider an elliptic complex of differential cone operators as in \eqref{catania} and let $T=\phi\circ f^*$ be a geometric endomorphism of \eqref{catania} as in Definition \ref{velletri}.  Finally suppose that  $f$ has only simple fixed points. Then  we have: 
\begin{equation}
\label{benevento}
L_{2,max/min}(T)=\lim_{t\rightarrow 0}(\sum_{q\in Fix(f)}\sum_{i=0}^n(-1)^i\int_{U_q}\tr(T\circ e^{-t\mathcal{P}_{abs/rel,i}})dvol_g)
\end{equation}
 where $U_q$ is an open neighborhood of $q\in Fix(f)$ (clearly, when $q\in sing(X)\cap Fix(f)$ then we mean $U_q-\{q\}$).
\end{teo}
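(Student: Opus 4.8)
The plan is to derive \eqref{benevento} from the $t$-independent supertrace formula of Theorem \ref{santiago}, by showing that as $t\to 0$ only arbitrarily small neighborhoods of $Fix(f)$ contribute to the heat supertrace. First I would apply Theorem \ref{santiago} to write, for every $t>0$,
\[
L_{2,max/min}(T)=\sum_{i=0}^n(-1)^i\Tr\big(T_i\circ e^{-t\mathcal{P}_{abs/rel,i}}\big).
\]
As recalled just before Theorem \ref{santiago}, $T_i\circ e^{-t\mathcal{P}_{abs/rel,i}}$ is trace-class with smooth kernel $\phi_i\circ k_{abs/rel,i}(t,f(x),y)$; by the standard identity for the trace of a trace-class operator with continuous kernel this gives $\Tr(T_i\circ e^{-t\mathcal{P}_{abs/rel,i}})=\int_M\tr(\phi_i\circ k_{abs/rel,i}(t,f(x),x))\,dvol_g$. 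Hence it is enough to prove that the contribution to this integral coming from the complement of the $U_q$'s vanishes as $t\to 0$.

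The heart of the argument is the localisation at the fixed points. Since $f$ has only simple fixed points, $Fix(f)$ is finite by Corollary \ref{lacasella}, and it contains all of $sing(X)$ by Definition \ref{velletri}. I would therefore pick the neighborhoods $U_q$, $q\in Fix(f)$, mutually disjoint and small enough that $\overline{U_q}\cap Fix(f)=\{q\}$, choosing $U_q\cong C_2(L_q)$ when $q$ is a conical point so that $\overline{U_q}$ contains an entire collar of the corresponding boundary component of $\overline{M}$. Then $K:=M\setminus\bigcup_q U_q$ is a closed subset of the compact space $X$ disjoint from $sing(X)$, hence a compact subset of the open manifold $M$, on which $f$ has no fixed point. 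Covering $K$ by finitely many open sets $W_1,\dots,W_N$, each with $\overline{W_\alpha}$ compact in $M$ and $\overline{f(W_\alpha)}\cap\overline{W_\alpha}=\emptyset$ (possible because $f(x)\neq x$ for all $x\in K$), and fixing a subordinate partition of unity, I would invoke the off-diagonal estimate of Theorem \ref{casale} for the non-negative self-adjoint operator $\mathcal{P}_{abs/rel,i}$: on each disjoint compact pair $f(\overline{W_\alpha})\times\overline{W_\alpha}$ one has $k_{abs/rel,i}(t,\cdot,\cdot)=O(t^n)$ for every $n$. Since $\phi_i$ is a bounded bundle homomorphism, this forces $\int_K\tr(\phi_i\circ k_{abs/rel,i}(t,f(x),x))\,dvol_g=O(t^\infty)$ as $t\to 0$.

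To conclude, the disjointness of the $U_q$ gives $M=K\sqcup\bigsqcup_q U_q$, hence
\[
\Tr\big(T_i\circ e^{-t\mathcal{P}_{abs/rel,i}}\big)=\sum_{q\in Fix(f)}\int_{U_q}\tr\big(\phi_i\circ k_{abs/rel,i}(t,f(x),x)\big)\,dvol_g+O(t^\infty).
\]
Multiplying by $(-1)^i$, summing over $i$, and recalling that by Theorem \ref{santiago} the left-hand side does not depend on $t$, I would let $t\to 0$ to obtain \eqref{benevento}.

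The step I expect to demand the most care is the choice of the neighborhoods $U_q$ around the conical points: one must make sure that deleting them leaves a set that is genuinely compact \emph{inside} the open manifold $M$, because the off-diagonal heat-kernel decay of Theorem \ref{casale} is a statement about disjoint compact subsets of $M$, not of $\overline{M}$. Everything else --- the off-diagonal decay, the trace--kernel identification, and the partition-of-unity bookkeeping --- is routine once this is arranged.
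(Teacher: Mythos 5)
Your proposal is correct and follows essentially the same route as the paper: both start from the $t$-independent supertrace formula of Theorem \ref{santiago} (equivalently Corollary \ref{sandiego}), split $M$ into the neighborhoods $U_q$ and the complement, observe that the complement is compact in $M$ (since $sing(X)\subset Fix(f)$) and fixed-point free, and kill its contribution as $t\to 0$ via the off-diagonal decay in Theorem \ref{casale}. Your finite cover by sets $W_\alpha$ with $\overline{f(W_\alpha)}\cap\overline{W_\alpha}=\emptyset$ is just a slightly more explicit way of applying that same off-diagonal estimate than the paper's direct statement that $\{(f(q),q):q\in V\}$ is a compact set disjoint from the diagonal.
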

\begin{proof}
We know, by the assumptions, that  $f$ has only simple fixed points.  For each of these points, that we label $q$, let $U_q$ be an open neighborhood of $q$. Then, using again Corollary \ref{sandiego}, we know that $L_{2,max/min}(T)=\lim_{t\rightarrow 0}\sum_i(-1)^i\int_{M}\tr(T_i\circ e^{-t\mathcal{P}_{abs/rel,i}})$. Obviously we can break the member on the right as $$\sum_{q\in Fix(f)}\sum_{i=0}^n(-1)^i\int_{U_q}\tr(T_i\circ e^{-t\mathcal{P}_{abs/rel,i}})dvol_g+\sum_{i=0}^n(-1)^{i}\int_{V}\tr(T_i\circ e^{-t\mathcal{P}_{abs/rel,i}})dvol_g$$  where  $V=M-\cup_{q\in Fix(f)}U_q$. 
 Now, as remarked previously, we know that $f(q)=q$ for each $q\in sing(X)$. This implies $\{(f(q),q):\ q\in V\}$  is a compact subset of $M\times M$ disjoint from $\Delta_{M}$. So we can use the second property of Theorem \ref{casale} to conclude that $$\lim_{t\rightarrow 0}\int_{V}\tr(\phi_i\circ e^{-t\mathcal{P}_{abs/rel,i}}(f(q),q))dvol_g=\int_{V}\lim_{t\rightarrow 0}\tr(\phi_i\circ e^{-t\mathcal{P}_{abs/rel,i}}(f(q),q))dvol_g=0.$$ This complete the proof.
\end{proof}


The second point in the above theorem suggests to break the Lefschetz numbers as a contribution of two terms, that is \begin{equation}
\label{sapporo}
L_{2,max/min}(T)=\mathcal{L}_{max/min}(T,\mathcal{R})+\mathcal{L}_{max/min}(T,\mathcal{S})
\end{equation}
where $\mathcal{L}_{max/min}(T,\mathcal{R})$ is the contribution given by the simple fixed point lying in $reg(X)$, that is $$\mathcal{L}_{max/min}(T,\mathcal{R})=\lim_{t\rightarrow 0}(\sum_{q\in Fix(f)\cap reg(X)}\sum_{i=0}^n(-1)^i\int_{U_q}\tr(T_i\circ e^{-t\mathcal{P}_{abs/rel,i}})dvol_g)$$ and analogously $\mathcal{L}_{max/min}(T,\mathcal{S})$ is the contribution given by the simple fixed point lying in $Fix(f)\cap sing(X)$, that is $$\mathcal{L}_{max/min}(T,\mathcal{S})=\lim_{t\rightarrow 0}(\sum_{q\in Fix(f)\cap sing(X)}\sum_{i=0}^n(-1)^i\int_{U_q-\{q\}}\tr(T_i\circ e^{-t\mathcal{P}_{abs/rel,i}})dvol_g).$$

\begin{teo}
\label{sacramento}
In the hypothesis of the previous theorem, suppose furthermore that  for each $i=0,...,n$ $$P^t_i\circ P_i+P_{i-1}\circ P_{i-1}^t:C^{\infty}_c(M,E_i)\rightarrow C^{\infty}_{c}(M,E_i)$$ is a generalized Laplacian (see Definition \ref{tranquillo}). Then we get :
 $$L_{2,max}(T)=\sum_{q\in Fix(f)\cap M}\sum_{i=0}^n\frac{(-1)^i\Tr(\phi_i)}{|det(Id-d_qf)|}+\mathcal{L}_{2,max}(T,\mathcal{S}).$$ Analogously for $L_{2,min}(T)$ we have 
$$L_{2,min}(T)=\sum_{q\in Fix(f)\cap M}\sum_{i=0}^n\frac{(-1)^i\Tr(\phi_i)}{|det(Id-d_qf)|}+\mathcal{L}_{2,min}(T,\mathcal{S}).$$ 
\end{teo}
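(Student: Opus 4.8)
The plan is to reduce the statement to a purely local computation at each fixed point lying in the interior $M$, and then to recognise that computation as the classical Atiyah--Bott fixed point formula rewritten in heat-kernel terms. By Theorem \ref{urbana} together with the decomposition \eqref{sapporo} we already have $L_{2,max/min}(T) = \mathcal{L}_{2,max/min}(T,\mathcal{R}) + \mathcal{L}_{2,max/min}(T,\mathcal{S})$, where
$$\mathcal{L}_{2,max/min}(T,\mathcal{R}) = \lim_{t\to 0}\sum_{q\in Fix(f)\cap reg(X)}\sum_{i=0}^n(-1)^i\int_{U_q}\tr\bigl(\phi_i\circ k_{abs/rel,i}(t,f(x),x)\bigr)\,dvol_g .$$
Since $Fix(f)$ is finite by Corollary \ref{lacasella}, the limit commutes with both sums, so it suffices to prove that for every $q\in Fix(f)\cap M$ and every $i$,
$$\lim_{t\to 0}\int_{U_q}\tr\bigl(\phi_i\circ k_{abs/rel,i}(t,f(x),x)\bigr)\,dvol_g = \frac{\Tr(\phi_i)}{|det(Id-d_qf)|},$$
where on the right $\Tr(\phi_i)$ denotes the trace of the endomorphism $\phi_i\colon (f^*E_i)_q = E_q\to E_q$. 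Summing $(-1)^i$ over $i$, then over $q\in Fix(f)\cap M$, and adding the term $\mathcal{L}_{2,max/min}(T,\mathcal{S})$ will then give the two asserted formulas.

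To set up the local identity, fix $q\in Fix(f)\cap M$. Being a simple fixed point in $reg(X)$, $q$ is isolated by Corollary \ref{fossatodivico}, so I may take $U_q$ to be a small geodesic ball around $q$ containing no other fixed point and with $f(\overline{U_q})$ a compact subset of $M$. By hypothesis $\mathcal{P}_i = P_i^t\circ P_i + P_{i-1}\circ P_{i-1}^t$ is a generalized Laplacian in the sense of Definition \ref{tranquillo}, and $\mathcal{P}_{abs/rel,i}$ is a non-negative self-adjoint extension of it; hence the last (sharp) item of Theorem \ref{casale} applies and gives, for $x\in U_q$ and in every $C^l$-norm,
$$k_{abs/rel,i}(t,f(x),x) \sim_{t\to 0} h_t(f(x),x)\sum_{n\geq 0}\Phi_n(f(x),x)\,t^n , \qquad h_t(y,z) = (4\pi t)^{-\frac{m+1}{2}}\,e^{-\frac{d(y,z)^2}{4t}}\,\eta(d(y,z)^2),$$
with $\Phi_n\in C^\infty(M\times M, E_i\boxtimes E_i^*)$ and, since $\mathcal{P}_i$ is a generalized Laplacian, $\Phi_0(q,q) = Id_{E_q}$.

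The remaining step is the Laplace-type estimate, which is where the only real work lies. Working in normal coordinates $x=\exp_q(v)$ centred at $q$ one has $d(f(x),x)^2 = |(Id-d_qf)v|^2 + O(|v|^3)$, $dvol_g = (1+O(|v|))\,dv$, and $\tr(\phi_i(x)\Phi_n(f(x),x)) = \tr(\phi_i\,\Phi_n(q,q)) + O(|v|)$; moreover $\eta\equiv 1$ for $|v|$ small, while away from $v=0$ inside $U_q$ the factor $e^{-d(f(x),x)^2/4t}$ is exponentially small because $q$ is the only fixed point there. Rescaling $v=\sqrt{t}\,w$, so that $dv = t^{\frac{m+1}{2}}dw$ cancels the prefactor $(4\pi t)^{-\frac{m+1}{2}}$, and letting $t\to 0$, only the $n=0$ term survives and yields
$$(4\pi)^{-\frac{m+1}{2}}\int_{\mathbb{R}^{m+1}}e^{-\frac14|(Id-d_qf)w|^2}\,dw\cdot\tr(\phi_i\,\Phi_0(q,q)) = \frac{\Tr(\phi_i)}{|det(Id-d_qf)|},$$
where I use $\int_{\mathbb{R}^{m+1}}e^{-\frac14\langle Aw,w\rangle}\,dw = (4\pi)^{\frac{m+1}{2}}(\det A)^{-1/2}$ with $A = (Id-d_qf)^t(Id-d_qf)$, so that $(\det A)^{1/2} = |det(Id-d_qf)|$, together with $\Phi_0(q,q) = Id_{E_q}$. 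All remaining contributions — the terms with $n\geq 1$, the $O(|v|^3)$ correction inside the exponent, and the $O(|v|)$ remainders of the volume form and of the trace coefficient — carry a positive power of $t$ against a Gaussian-times-polynomial and hence vanish in the limit, as does the tail of the asymptotic expansion in $t$. The main obstacle is precisely this uniform control of the error terms; however, since $q$ lies in the interior of $\overline{M}$, the entire analysis is local and identical to the one on a closed manifold, so the conical structure of $X$ plays no role at this point and the standard heat-kernel estimates recalled in Theorem \ref{casale} suffice.
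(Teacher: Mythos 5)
Your proposal is correct and follows essentially the same route as the paper: localization to the fixed points via Theorem \ref{urbana} and the splitting \eqref{sapporo}, then the sharpened asymptotic expansion for generalized Laplacians in the last point of Theorem \ref{casale} at each interior fixed point, followed by the classical Laplace-type Gaussian computation. The only difference is that the paper simply cites the closed-manifold argument (\cite{BGV} Theorem 6.6, \cite{JR} Theorem 10.12) for that final rescaling computation, which you carry out explicitly and correctly.
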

\begin{proof}
By Theorem \ref{urbana}, we know that the $L^2-$Lefschetz numbers depend only on the simple fixed point of $f$ and that we can localize their contribution, that is,  $$L_{2,max/min}(T)=\lim_{t\rightarrow 0}(\sum_{q\in Fix(f)}\sum_{i=0}^n(-1)^i\int_{U_q}\tr(T\circ e^{-t\mathcal{P}_{abs/rel,i}})dvol_g)$$ where $U_q$ is an arbitrary open neighborhood of $q$ (and clearly when $q\in sing(X)$ then we mean the regular part of $U_q$). If $q\in reg(X)\cap Fix(f)$, by the assumptions, we can use the local asymptotic expansion recalled in the last point of Theorem \ref{casale}. Now, to get the conclusion, the proof is exactly the same as in the closed case; see for example  \cite{BGV} Theorem 6.6 or \cite{JR} Theorem  10.12. 
\end{proof}
We have the following immediate corollary:
\begin{cor}
\label{adissabeba}
In the same hypothesis of Theorem \ref{sacramento}; 
Then: 
\begin{enumerate}
\item $\mathcal{L}_{max}(T,\mathcal{R})=\mathcal{L}_{min}(T,\mathcal{R})$ that is,  the simple fixed points in $M$ give the same contributions for both 
the Lefschetz numbers $L_{2,max/min}(T).$
\item $\mathcal{L}_{max/min}(T,\mathcal{S})$ do not depend on the particular conic metric  fixed on $M$ and  on the metrics $\rho_0,...,\rho_n$ respectively fixed on $E_0,...,E_n$.
\end{enumerate}
\end{cor}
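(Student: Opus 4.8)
The plan is to deduce both statements directly from Theorem \ref{sacramento} together with Proposition \ref{ancona}, so that the corollary is genuinely a matter of bookkeeping.

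For the first assertion, recall that in the proof of Theorem \ref{sacramento} the contribution of an interior simple fixed point $q\in Fix(f)\cap reg(X)$ to $L_{2,max/min}(T)$, namely $\lim_{t\to0}\sum_{i=0}^n(-1)^i\int_{U_q}\tr(T_i\circ e^{-t\mathcal{P}_{abs/rel,i}})\,dvol_g$, is computed by means of the local short-time asymptotic expansion for generalized Laplacians recalled in the last bullet of Theorem \ref{casale}. The point I would stress is that the coefficient sections $\Phi_n\in C^\infty(M\times M,E\boxtimes E^*)$ occurring there depend only on the differential operator $P_i^t\circ P_i+P_{i-1}\circ P_{i-1}^t$ on a neighbourhood of $q$, and not on the global self-adjoint extension one has chosen --- this is precisely the content of the locality of the expansion asserted in the proof of Theorem \ref{casale}. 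Hence the computation carried out in the closed case (e.g. \cite{BGV}, Theorem 6.6, or \cite{JR}, Theorem 10.12) produces, for \emph{both} boundary conditions, the same Atiyah--Bott local term $\sum_{i=0}^n(-1)^i\Tr(\phi_i)/|\det(Id-d_qf)|$ at each such $q$. Summing over $q\in Fix(f)\cap reg(X)$ then gives $\mathcal{L}_{max}(T,\mathcal{R})=\mathcal{L}_{min}(T,\mathcal{R})$, and identifies this common value with $\sum_{q\in Fix(f)\cap M}\sum_{i=0}^n(-1)^i\Tr(\phi_i)/|\det(Id-d_qf)|$.

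For the second assertion, I would rearrange the two displayed identities of Theorem \ref{sacramento} into $\mathcal{L}_{max/min}(T,\mathcal{S})=L_{2,max/min}(T)-\sum_{q\in Fix(f)\cap M}\sum_{i=0}^n(-1)^i\Tr(\phi_i)/|\det(Id-d_qf)|$. The first summand is independent of the conic metric $g$ and of the bundle metrics $\rho_0,\dots,\rho_n$ by Proposition \ref{ancona}. The second summand is manifestly independent of these choices as well: since $f(q)=q$ one has $f^*E_i|_q=E_i|_q$, so $\Tr(\phi_i)$ is the trace of the fibrewise endomorphism $\phi_{i,q}$ of the finite-dimensional vector space $E_i|_q$, an intrinsic quantity, and likewise $\det(Id-d_qf)$ is the determinant of an endomorphism of $T_qM$, which requires no metric. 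Therefore the difference $\mathcal{L}_{max/min}(T,\mathcal{S})$ does not depend on the conic metric fixed on $M$ nor on the metrics $\rho_0,\dots,\rho_n$ fixed on $E_0,\dots,E_n$.

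As the statement is an immediate consequence of Theorem \ref{sacramento}, there is essentially no obstacle; the only point deserving a line of care is the locality of the short-time heat asymptotics used in the first part, i.e. the fact that changing the global self-adjoint extension of $\mathcal{P}_i$ leaves the on-diagonal small-$t$ expansion near an interior point unchanged --- which is exactly the local version of Theorem \ref{casale} already invoked in the proof of Theorem \ref{sacramento}.
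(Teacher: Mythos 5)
Your proposal is correct and follows essentially the same route as the paper: the first assertion is read off from the identification, in Theorem \ref{sacramento}, of the regular contribution with the Atiyah--Bott local term $\sum_{q\in Fix(f)\cap M}\sum_{i=0}^n(-1)^i\Tr(\phi_i)/|\det(Id-d_qf)|$ for both boundary conditions, and the second follows by combining Proposition \ref{ancona} with the manifest metric-independence of that local term, so that $\mathcal{L}_{max/min}(T,\mathcal{S})=L_{2,max/min}(T)-\mathcal{L}_{max/min}(T,\mathcal{R})$ inherits the independence. Your extra remark on the locality of the short-time expansion is a useful elaboration of why the interior term is the same for the absolute and relative extensions, but it is not a different method.
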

\begin{proof}
The first assertion is an immediate consequence of the second point of Theorem \ref{sacramento}. For the second statement, by Proposition \ref{ancona}, we know that $L_{2,max/min}(T)$ are independent on the conic metric we put over $M$ and on the metric $\rho_0,...,\rho_n$ respectively on $E_0,...,E_n$. Again, by the second point of Theorem \ref{sacramento}, we know   that also $\mathcal{L}_{max/min}(T,\mathcal{R})$ are independent from the conic metrics and on  the metric $\rho_0,...,\rho_n$ respectively on $E_0,...,E_n$. Therefore the same conclusion holds for$\mathcal{L}_{max/min}(T,\mathcal{S})$.  The corollary is proved.
\end{proof}


\section{The contribution of the singular points}
The aim of this section is to give, in some particular cases, an explicit formula for $\mathcal{L}_{max/min}(T,\mathcal{S})$, that is for the contribution given by the singular points to the Lefschetz numbers $L_{2,max/min}(T).$\\ Consider the same situation described in  Theorem \ref{urbana}. Suppose moreover that the following properties hold:
\begin{enumerate}
\item For each $q\in sing(X)$ there exists an isomorphism $\chi_q:U_q\rightarrow C_2(L_q)$ such that on $[0,2)\times L_q$, using \eqref{genova}, each operator $A_k$ is \textbf{constant} in $x$ and, using the decomposition \eqref{cerrone}, the map $f$ takes the form: 
 \begin{equation}
\label{derby}
f=(rA(p),B(p)).
\end{equation}
\item  On $reg(C_2(L_q))$, using again the isomorphism $\chi_q:U_q\rightarrow C_2(L_q)$,    the conic metric $g$ satisfies $g=dr^2+r^2h$ with $h$  that does not depend on $r$ and  each metric $\rho_i$ on $E_i$ does not depend on $r$ in a collar neighborhood of $\partial \overline{M}$.
\end{enumerate} 

 Before  stating the next theorem we recall a definition from \cite{MAL}.
\begin{defi}
\label{teramo}
Consider the isometry $U_t:L^2(reg(C(N)),E)\rightarrow L^2(reg(C(N)),E)$ as defined in the proof of Lemma \eqref{ravenna}, that is $U_t(\gamma)=t^{\frac{n+1}{2}}\gamma(tr,p)$. Consider an operator $P_0\in \dif_0^{\mu,\nu}(reg(C(N)))$ such that, using the expression \eqref{genova}, each $A_k$ is constant in $x$. Then a closed extension $P$ of $P_0$ is said scalable if $U^*_tPU_t=t^{\nu}P$.
\end{defi}
\begin{lemma}
\label{jesi}
Given $P_0\in \dif_0^{\mu,\nu}(reg(C(N)))$ as in Definition \ref{teramo} then $P_{0,max}$ and $P_{0,min}$ are always scalable. If we take $P_0^t$, the formal adjoint of $P_0$, then also $P^t_{0,min}\circ P_{0,max}$, $P^t_{0,max}\circ P_{0,min}$, $P_{0,min}\circ P^t_{0,max}$ and  $P_{0,max}\circ P^t_{0,min}$ are scalable extensions of $P^t_0\circ P_0$ and $P_0\circ P_0^t$ respectively. Finally, if in a complex we consider $\mathcal{P}_i:=P_i^t\circ P_i+P_{i-1}\circ P_{i-1}^t$ (see the statement of Theorem \ref{sacramento}) then also the closed extension $\mathcal{P}_{abs,i}$ and $\mathcal{P}_{rel,i}$ (see \eqref{kaak} and \eqref{kkll}) are scalable extensions.
\end{lemma}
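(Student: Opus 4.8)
The plan is to establish the scaling identity on smooth compactly supported sections first, and then to propagate it to every closed extension occurring in the statement by purely functional-analytic bookkeeping. The core computation is the following. Writing $P_0=x^{-\nu}\sum_{k=0}^{\mu}A_k(-x\tfrac{\partial}{\partial x})^k$ as in \eqref{genova} with each $A_k$ constant in $x$, one checks directly that the isometry $U_t(\gamma)(r,p)=t^{\frac{n+1}{2}}\gamma(tr,p)$ satisfies $(-r\tfrac{\partial}{\partial r})\circ U_t=U_t\circ(-x\tfrac{\partial}{\partial x})$, and that $U_t$ commutes with each $A_k$, since $A_k$ acts only in the link variable $p$ while $U_t$ merely rescales $r$ and multiplies by a constant. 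Combining these two facts with the rescaling $x\mapsto tr$ of the prefactor $x^{-\nu}$ yields
\[
P_0\circ U_t=t^{\nu}\,U_t\circ P_0 \qquad\text{on }C^{\infty}_c(reg(C(N)),E),
\]
equivalently $U_t^*P_0U_t=t^{\nu}P_0$ on $C^{\infty}_c$. It is exactly here that the hypothesis of Definition \ref{teramo}, that the $A_k$ be constant in $x$, is used: otherwise conjugation by $U_t$ would replace $A_k$ by $A_k(tr,\cdot)$ and destroy the identity.

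Next I would pass to the formal adjoint and to the minimal and maximal extensions. Taking formal adjoints in the identity above on $C^{\infty}_c$ and using that $U_t$ is unitary (so that the adjoint of $U_t^*$ is again $U_t$) gives $U_t^*P_0^tU_t=t^{\nu}P_0^t$ on $C^{\infty}_c$ as well. Since $U_t$ maps $C^{\infty}_c(reg(C(N)),E)$ onto itself and intertwines $P_0$ (resp.\ $P_0^t$) with $t^{\nu}P_0$ (resp.\ $t^{\nu}P_0^t$), it maps the graph of $P_0|_{C^{\infty}_c}$ onto itself and therefore preserves its closure; this gives $U_t^*P_{0,min}U_t=t^{\nu}P_{0,min}$ and $U_t^*P^t_{0,min}U_t=t^{\nu}P^t_{0,min}$, so both are scalable. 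For the maximal extensions I would invoke the identities $(P_{max})^{*}=P^t_{min}$ and $(P_{min})^{*}=P^t_{max}$ recalled in Section~1 (so that $P_{0,max}=(P^t_{0,min})^{*}$ and $P^t_{0,max}=(P_{0,min})^{*}$): since $U_t$ is unitary, the Hilbert-space adjoint of a scalable operator is again scalable with the same exponent, hence $P_{0,max}$ and $P^t_{0,max}$ are scalable. (Alternatively one can argue directly from Definition \ref{nino}, testing $U_t\gamma$ against sections in $C^{\infty}_c$ and using the scaling relation for $P_0^t$.)

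It remains to treat the compositions and the two laplacians. Each of $P_{0,max},P_{0,min},P^t_{0,max},P^t_{0,min}$ is scalable with exponent $\nu$, and $U_t$, being unitary with inverse $U_{1/t}$ of the same form, preserves the domain of each. If $A,B$ are closed operators with $U_t^*AU_t=t^{\nu}A$ and $U_t^*BU_t=t^{\nu}B$, then $BU_t=t^{\nu}U_tB$ together with the fact that $U_t$ preserves $\mathcal{D}(A)$ shows at once that $U_t$ preserves $\mathcal{D}(A\circ B)=\{v\in\mathcal{D}(B):Bv\in\mathcal{D}(A)\}$ and that $U_t^*(A\circ B)U_t=t^{2\nu}(A\circ B)$. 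Applying this to the four pairs gives that $P^t_{0,min}\circ P_{0,max}$, $P^t_{0,max}\circ P_{0,min}$, $P_{0,min}\circ P^t_{0,max}$ and $P_{0,max}\circ P^t_{0,min}$ are scalable with exponent $2\nu$, the weight of $P_0^t\circ P_0$. Finally $\mathcal{P}_{abs,i}$ and $\mathcal{P}_{rel,i}$ of \eqref{kaak} and \eqref{kkll} are sums of two such compositions; since $U_t$ preserves the common domain \eqref{saed} and intertwines each summand with exponent $2\nu$, one concludes $U_t^*\mathcal{P}_{abs/rel,i}U_t=t^{2\nu}\mathcal{P}_{abs/rel,i}$. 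The only mildly delicate point in the argument is keeping careful track of the domains of the composed unbounded operators under $U_t$ in this last step, but once the elementary identity $P_0\circ U_t=t^{\nu}U_t\circ P_0$ on $C^{\infty}_c$ is in hand there is no real obstacle.
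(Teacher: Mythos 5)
Your argument is correct, and it is in fact more self-contained than what the paper does: for the first assertion (scalability of $P_{0,max}$ and $P_{0,min}$) the paper simply cites Lesch, \emph{Operators of Fuchs type}, p.~58, and then declares the remaining assertions an immediate consequence of that and of the definition of scalable extension. What you supply is essentially the content hiding behind that citation and that ``immediate'': the pointwise identity $P_0\circ U_t=t^{\nu}U_t\circ P_0$ on $C^{\infty}_c(reg(C(N)),E)$, which uses exactly the hypothesis that the $A_k$ are constant in $x$; the passage to $P_{0,min}$ and $P^t_{0,min}$ by graph closure (valid because $U_t$ maps $C^{\infty}_c$ of the \emph{full} cone onto itself --- this is the one place where working on $C(N)$ rather than a truncated cone matters); the passage to $P_{0,max}=(P^t_{0,min})^{*}$ and $P^t_{0,max}=(P_{0,min})^{*}$ by unitarity of $U_t$, which indeed relies on the identities $(P_{max})^{*}=P^t_{min}$, $(P_{min})^{*}=P^t_{max}$ recalled in Section~1 and on the fact that $U_t$ is an isometry for the conic measure with fibre metric independent of $r$; and the propagation to compositions (exponent $2\nu$, matching the weight of $P_0^t\circ P_0$) and to $\mathcal{P}_{abs/rel,i}$ with the domain bookkeeping you spell out, where the operator equality $U_t^{*}AU_t=t^{\nu}A$ already encodes $U_t\mathcal{D}(A)=\mathcal{D}(A)$. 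So the route is the same in spirit, but you have made explicit both the core computation the paper outsources to Lesch and the domain arguments the paper leaves implicit; nothing in your write-up fails, and the only points worth keeping visible are the two just mentioned (full cone for the minimal-extension step, $r$-independence of the fibre metric for unitarity of $U_t$).
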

\begin{proof}
For the first assertion see \cite{MAL} pag. 58. The others assertions are an immediate consequence of the previous one and of the definition of scalable extension.
\end{proof}

Now we are ready to state  the following theorem:
\begin{teo}
\label{edimburgo}
In the same hypothesis of Theorem \ref{urbana}. Suppose moreover that the two properties  described above Definition \ref{teramo} hold. 
Then we have: 
\begin{equation}
\label{melbourne}
\mathcal{L}_{max/min}(T,\mathcal{S})=\sum_{q\in sing(X)}\sum_{i=0}^{n}(-1)^i\frac{1}{2\nu}\int_{0}^{\infty}\frac{dx}{x}\int_{L_q} \tr(\phi_i\circ e^{-x\mathcal{P}_{abs/rel,i}}(A(p),B(p),1,p))dvol_h.
\end{equation}
\end{teo}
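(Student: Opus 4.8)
The plan is to localize the computation near each conical point by means of Theorem~\ref{urbana}, pass from the heat kernel of $\mathcal{P}_{abs/rel,i}$ on $reg(X)$ to its model on the infinite cone $reg(C(L_q))$, exploit the scaling property (Lemma~\ref{ravenna}) together with the scalability of $\mathcal{P}_{abs/rel,i}$ (Lemma~\ref{jesi}), and finally perform the change of variables $x=tr^{-2\nu}$ in the radial part of the integral.

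First, by Theorem~\ref{urbana}, $\mathcal{L}_{max/min}(T,\mathcal{S})=\sum_{q\in sing(X)}\sum_{i=0}^{n}(-1)^{i}\lim_{t\to0}\int_{U_{q}-\{q\}}\tr(\phi_{i}\circ k_{abs/rel,i}(t,f(x),x))\,dvol_{g}$, and from the proof of that theorem the size of $U_{q}\cong C_{2}(L_{q})$ is immaterial, so I would take it as small as convenient. Since $A,B$ do not depend on $r$ and $q$ is a simple fixed point, $(A(p),B(p))\neq(1,p)$ for every $p\in L_{q}$; hence $f$ has no fixed point on $U_{q}-\{q\}$, and, the cone metric being homogeneous of degree $2$ under $r\mapsto\lambda r$, one has $d(f(r,p),(r,p))=r\,d((A(p),B(p)),(1,p))$. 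Consequently, for each fixed $\delta>0$ the set $\{(f(r,p),(r,p)):\delta\le r<2,\ p\in L_{q}\}$ is compact and disjoint from the diagonal, so by the off-diagonal estimate of Theorem~\ref{casale} the part of the integral with $r\ge\delta$ is $O(t^{\infty})$; thus the limit is computed from $\int_{0<r<\delta}(\cdots)$ alone.

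Next, because on $U_{q}$ the operators $P_{i}$ (hence $\mathcal{P}_{i}$), the metric $g=dr^{2}+r^{2}h$ and the map $f$ all have the exact conic product form, the short-time behaviour of $k_{abs/rel,i}$ on $U_{q}-\{q\}$ agrees, modulo $O(t^{\infty})$ uniformly down to the tip, with the model heat kernel $k^{\mathrm{cone}}_{abs/rel,i}$ of $\mathcal{P}_{abs/rel,i}$ on $reg(C(L_{q}))$; this locality near a conical point is the content of the scaling-plus-Duhamel analysis of \cite{MAL}. (If $\phi_{i}$ depends on $r$ it is replaced in the limit below by its value at $r=0$, so I suppress that dependence.) Now $\mathcal{P}_{i}$ has weight $2\nu$ and by Lemma~\ref{jesi} $\mathcal{P}_{abs/rel,i}$ is scalable, so $\mathcal{P}_{r}=\mathcal{P}_{abs/rel,i}$ and Lemma~\ref{ravenna}, applied with $\lambda=r$, gives
\begin{equation*}
k^{\mathrm{cone}}_{abs/rel,i}\bigl(t,(rA(p),B(p)),(r,p)\bigr)=\frac{1}{r^{m+1}}\,k^{\mathrm{cone}}_{abs/rel,i}\bigl(tr^{-2\nu},(A(p),B(p)),(1,p)\bigr).
\end{equation*}
Since $dvol_{g}=r^{m}\,dr\,dvol_{h}$ on $C(L_{q})$, the factor $r^{-(m+1)}$ combines with $r^{m}$ to leave $dr/r$; the substitution $x=tr^{-2\nu}$, i.e.\ $dr/r=-\tfrac{1}{2\nu}\,dx/x$, turns $\int_{0<r<\delta}(\cdots)$ into
\begin{equation*}
\frac{1}{2\nu}\int_{t\delta^{-2\nu}}^{\infty}\frac{dx}{x}\int_{L_{q}}\tr\bigl(\phi_{i}\circ k^{\mathrm{cone}}_{abs/rel,i}(x,(A(p),B(p)),(1,p))\bigr)\,dvol_{h}.
\end{equation*}
It then remains to check that the integral over $x\in(0,\infty)$ converges absolutely: as $x\to0^{+}$ the integrand is $O(x^{N})$ for every $N$ by the off-diagonal estimate of Theorem~\ref{casale}, since $\{((A(p),B(p)),(1,p)):p\in L_{q}\}$ is compact and disjoint from the diagonal of $reg(C(L_{q}))^{2}$; as $x\to\infty$ the standard large-time bounds for the conic heat kernel (cf.\ \cite{MAL},\ \cite{JEC}) make it integrable against $dx/x$. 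Hence $t\delta^{-2\nu}\to0$ gives $\lim_{t\to0}\int_{U_{q}-\{q\}}(\cdots)=\tfrac{1}{2\nu}\int_{0}^{\infty}\tfrac{dx}{x}\int_{L_{q}}\tr(\phi_{i}\circ e^{-x\mathcal{P}_{abs/rel,i}}(A(p),B(p),1,p))\,dvol_{h}$, and summing over $i$ with signs and over $q\in sing(X)$ yields \eqref{melbourne}.

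The hard part will be the step in the third paragraph: controlling $k_{abs/rel,i}-k^{\mathrm{cone}}_{abs/rel,i}$ \emph{uniformly up to the cone tip}. Away from the tip this is the usual locality of heat-kernel asymptotics for elliptic operators, but near $r=0$ one must genuinely use the conic scaling structure together with a Duhamel estimate to bound the error — exactly the delicate piece of the heat-kernel analysis developed in \cite{MAL}. A lesser but still real point is the absolute convergence of the $x$-integral at $x=+\infty$, which rests on large-time decay of the model-cone heat kernel; the convergence at $x=0^{+}$, by contrast, is an immediate consequence of the non-degeneracy $(A(p),B(p))\neq(1,p)$ and the off-diagonal estimate.
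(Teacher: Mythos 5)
Your proposal is correct and follows essentially the same route as the paper's proof: localization via Theorem \ref{urbana}, transplantation of the heat kernel to the model cone by appealing to the estimates in \cite{MAL} (the paper cites pag.\ 42--43 for exactly this step), scalability of $\mathcal{P}_{abs/rel,i}$ (Lemma \ref{jesi}) combined with the scaling property (Lemma \ref{ravenna}), the substitution $x=tr^{-2\nu}$, and the simple-fixed-point condition plus the off-diagonal estimate of Theorem \ref{casale} to control the integrand as $x\to 0$ and pass to the limit. Your extra remarks (cutting off $r\ge\delta$ and checking integrability at $x\to\infty$) are minor refinements of, not departures from, the paper's argument.
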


\begin{proof}
Let $q\in sing(X)$. By the hypothesis we know that there exists an open neighborhood $U_q$ and an isomorphism $\chi_q:U_q\rightarrow C_2(L_q)$ such that, on $C_2(L_q)$, $f$ takes the form \eqref{derby} and each $A_k$ is constant in $x$.  Using  the properties stated in  \cite{MAL} pag. 42-43,  we get that the limit $$\lim_{t\rightarrow 0}\int_{reg(U_{q})}\tr(\phi_i\circ e^{-t\mathcal{P}_{abs/rel,i}}(rA(p),B(p),r,p))dvol_g$$ is equal to $$\lim_{t\rightarrow 0}\int_{reg(C_2(L_q))}\tr(\phi_i\circ e^{-t\mathcal{P}_{abs/rel,i}}(rA(p),B(p),r,p))r^mdvol_hdr$$ where, with a little abuse of notation, in the second expression we mean the heat kernel associated to  the absolute and relative extension of the operator, induced  by ${\mathcal{P}_{i}}|_{U_q}$ through $\chi_q$,  acting on $C^{\infty}_c(reg(C_2(L_q)),(\chi_q^{-1})^*E_i)$. So, for each $i=0,...,n$, we have to calculate  $$\lim_{t\rightarrow 0}\int_{reg(C_2(L_q))}\tr(\phi_i\circ e^{-t\mathcal{P}_{abs/rel,i}}(rA(p),B(p),r,p))r^mdrdvol_h.$$ Moreover, we assumed that, on $reg(C_2(L_q))$, the conic metric $g$ satisfies $g=dr^2+r^2h$ with $h$  that does not depend on $r$ and that each metric $\rho_i$ on $E_i$ does not depend on $r$ in a neighborhood of $\partial \overline{M}$. This implies that, for each $i=0,...,n$, the operator $\mathcal{P}_i$ satisfies the assumption at the beginning of the subsection, that is each $A_k$ does not depend on $x$. Therefore, using Lemma \ref{jesi}, we get that $\mathcal{P}_{abs/rel,i} $ are scalable extensions of $\mathcal{P}_i$. Now, after these observations, we can go on to calculate $$\lim_{t\rightarrow 0}\int_{reg(C_2(L_q))}\tr(\phi_i\circ e^{-t\mathcal{P}_{abs/rel}}(rA(p),B(p),r,p))dvol_g.$$ Using Lemma \ref{ravenna} and the fact that $\mathcal{P}_{abs/rel,i}$ are scalable extensions of $\mathcal{P}_i$ we get $$\int_{reg(C_2(L_q))}\tr(\phi_i\circ e^{-t\mathcal{P}_{abs/rel,i}}(rA(p),B(p),r,p))r^mdrdvol_h =$$  $$=\int_0^2\int_{L_q}\frac{1}{r}\tr(\phi_i\circ e^{-tr^{-2\nu}\mathcal{P}_{abs/rel,i}}(A(p),B(p),1,p))dvol_hhdr.$$ Now if we put $\frac{t}{r^{2\nu}}=x$ we get $\frac{-2\nu tdr}{r^{2\nu+1}}=dx$ which implies that  $$\frac{dx}{x}=\frac{-2\nu tdr}{r^{2\nu+1}}\frac{r^{2\nu}}{t}=-2\nu\frac{dr}{r}$$ Moreover when $r$ goes to $0$ then $x$ goes to $\infty$ and when $r$ goes to $2$ then $x$ goes to $\frac{t}{4}$. So we get $$\int_0^2\int_{L_q}\frac{1}{r}\tr(\phi_i\circ e^{-tr^{-2\nu}\mathcal{P}_{abs/rel,i}}(A(p),B(p),1,p))dvol_hhdr=$$ 
\begin{equation}
\label{massafra}
=\frac{1}{2\nu}\int_{t/4}^{\infty}\frac{dx}{x}\int_{L_q} \tr(\phi_i\circ e^{-x\mathcal{P}_{abs/rel,i}}(A(p),B(p),1,p))dvol_h.
\end{equation}
Therefore to conclude we have to evaluate the limit
\begin{equation}
\label{imperia}
\lim_{t\rightarrow 0}\frac{1}{2\nu}\int_{t/4}^{\infty}\frac{dx}{x}\int_{L_q} \tr(\phi_i\circ e^{-x\mathcal{P}_{abs/rel,i}}(A(p),B(p),1,p))dvol_h
\end{equation}
To do this consider the term $\int_{L_q} \tr(\phi_i\circ e^{-x\mathcal{P}_{abs/rel,i}}(A(p),B(p),1,p))dvol_h$. We know, by the hypothesis, that $f$ has only simple fixed points. In particular each $q\in  sing(X)$ is a simple fixed point. The conditions described in Definition \ref{serramaggio} together with \eqref{derby} implies that either $A(p)\neq 1$ for all $p\in L_q$ or $B:L_q\rightarrow L_q$ has not fixed points. Anyway each of these conditions implies that when $p$ runs over $L_q$ then $\{(A(p),B(p),1,p)\}$ is a compact subset of $reg(C_2(L_q))\times reg(C_2(L_q))$ that doesn't intersect the diagonal. Therefore we can use the second property stated in Theorem \ref{casale} to conclude that, when $x\rightarrow 0$,
\begin{equation}
\label{savona}
\int_{L_q} \tr(\phi_i\circ e^{-x\mathcal{P}_{abs/rel,i}}(A(p),B(p),1,p))dvol_h=O(x^N)\ \text{for each}\ N>0.
\end{equation}
In this way we can conclude that the limit \eqref{imperia} exists and we have

$$\lim_{t\rightarrow 0}\frac{1}{2\nu}\int_{t/4}^{\infty}\frac{dx}{x}\int_{L_q} \tr(\phi_i\circ e^{-x\mathcal{P}_{abs/rel,i}}(A(p),B(p),1,p))dvol_h=$$
\begin{equation}
\label{grosseto}
=\frac{1}{2\nu}\int_{0}^{\infty}\frac{dx}{x}\int_{L_q} \tr(\phi_i\circ e^{-x\mathcal{P}_{abs/rel,i}}(A(p),B(p),1,p))dvol_h.
\end{equation}
\end{proof}

Now, for each $i=0,..,n$, using again the hypothesis  and the notations of 
Theorem \ref{edimburgo}, and assuming still that $q$ is a simple fixed point for $f$, define the following "modified version" of the classical $\zeta-$function:
\begin{equation}
\label{rovigo}
\zeta_{T_i,q}(\mathcal{P}_{abs/rel,i})(s):=\frac{1}{2\nu}\int_{0}^{\infty}x^{s-1}dx\int_{L_q} \tr(\phi_i\circ e^{-x\mathcal{P}_{abs/rel,i}}(A(p),B(p),1,p))dvol_h.
\end{equation}
The definition makes sense for each $s\in \mathbb{C}$ because, as observed in the proof of Theorem \ref{edimburgo}, $\{(A(p),B(p),1,p)\}$ is a compact subset of $reg(X)\times reg(X)$ that is disjoint from the diagonal $\Delta_{reg(X)}$. Therefore we can apply the second point of Theorem \ref{casale} to conclude that, when $x\rightarrow 0$,
\begin{equation}
\label{bordighera}
\int_{L_q} \tr(\phi_i\circ e^{-x\mathcal{P}_{abs/rel,i}}(A(p),B(p),1,p))dvol_h=O(x^N)\ \text{for each}\ N>0.
\end{equation}
and this implies that $\zeta_{T_i,q}(\mathcal{P}_{abs/rel,i})(s)$ is a holomorphic function over the whole complex plane.
The reason behind \eqref{grosseto} is that if we compare  \eqref{grosseto} with the definitions of the zeta functions for a generalized Laplacian, see for example \cite{BGV} pag. 295, then it natural to think at \eqref{grosseto} as a sort of zeta function for the operators $\mathcal{P}_{abs/rel,i}$ valued in $0$, which takes account of the action of $T_i$ in its definition.
In this way, using \eqref{rovigo}, we can reformulate Theorem \ref{edimburgo} in a more concise  way:
\begin{equation}
\label{domodossola}
\mathcal{L}_{max/min}(T,\mathcal{S})=\sum_{q\in  sing(X)}\sum_{i=0}^{n}(-1)^i\zeta_{T_i,q}(\mathcal{P}_{abs/rel,i})(0).
\end{equation}

Before to conclude the section we make the following remarks.\\In the same hypothesis of  Theorem \ref{urbana} consider a point $q\in sing(X)$ such that $q$ is an attractive simple fixed point.  We recall that over a neighborhood  $U_q\cong [0,2)\times L_q$ of $q$ we can look at $f$ as a map given by $(rA(r,p),B(r,p)):[0,2)\times L_q\rightarrow [0,2)\times L_q$ with $A$ and $B$ smooth up to $0$. From Definition \ref{foggia} we know that $q$ is attractive if $A(0,p)<1$ for each fixed $p\in L_q$. Clearly this implies  that $f(U_q)\subset U_q$.
Therefore it follows that, if we consider the complex
\begin{equation}
\label{messina}
0\rightarrow C^{\infty}_{c}(U_q,E_{0}|_{U_q})\stackrel{P_{0}}{\rightarrow}C^{\infty}_{c}(U_q,E_{1}|_{U_q})\stackrel{P_{1}}{\rightarrow}...\stackrel{P_{n-1}}{\rightarrow}C^{\infty}_{c}(U_q,E_{n}|_{U_q})\stackrel{P_{n}}{\rightarrow}0
\end{equation} 
then $T$ is also a geometric endomorphism of \eqref{messina} and, using Proposition \ref{sanrocco}, we get that $T$ extends as a bounded endomorphism of the complexes $(L^2(U_q,E_{i}|_{U_q}),({P|_{U_q}})_{max/min,i})$.\\ Moreover, by the results proved in the first and the second chapter of  \cite{MAL}, it follows that $(L^2(U_q,E_{i}|_{U_q}),({P|_{U_q}})_{max/min,i})$ are both Fredholm complexes and that,  the respective heat operators , $e^{-t(\mathcal{P}|_{U_q})_{abs/rel,i}}:L^2(U_q,E_{i}|_{U_q})\rightarrow L^2(U_q,E_{i}|_{U_q})$,  are trace-class operators.\\Using again  the properties stated in \cite{MAL} at pag. 42-43, it follows that for each open neighborhood $V_q$ of $q$, such that  $\overline{V_q}$ is a  subset of $U_q$, we have  $$\lim_{t\rightarrow 0}\int_{V_q}\tr(\phi_i\circ e^{-t\mathcal{P}_{abs/rel,i}}(rA(r,p),B(r,p),r,p)dvol_g=$$ $$=\lim_{t\rightarrow 0}\int_{V_q}\tr(\phi_i\circ e^{-t(\mathcal{P}|_{U_q-\{q\}})_{abs/rel,i}}(rA(r,p),B(r,p),r,p)dvol_g.$$  
Suppose now that we are in the hypothesis of Theorem \ref{edimburgo}.
By the proof of the same theorem, it follows that for each $0<b\leq 2$ $$\lim_{t\rightarrow 0}\int_{0}^b \int_{L_q}\tr(\phi_i\circ e^{-t(\mathcal{P}|_{U_q-\{q\}})_{abs/rel,i}})(rA(p),B(p),r,p)r^mdvol_hdr=$$   $$\int_{0}^\infty x^{-1}dx\int_{L_q}\tr(\phi_i\circ e^{-x(\mathcal{P}|_{U_q-\{q\}})_{abs/rel,i}})(A(p),B(p),1,p)dvol_h$$
that is it does not depend on the particular $b$ we fixed. Therefore  we can conclude that
\begin{equation}
\label{semonte}
\lim_{t\rightarrow 0}\int_{U_q-\{q\}}\tr(\phi_i\circ e^{-t\mathcal{P}_{abs/rel,i}}(rA(p),B(p),r,p)dvol_g=
\end{equation}
$$=\lim_{t\rightarrow 0}\int_{U_q-\{q\}}\tr(\phi_i\circ e^{-t(\mathcal{P}|_{U_q-\{q\}})_{abs/rel,i}}(rA(p),B(p),r,p)dvol_g.$$
 Summarizing we obtained that  it makes sense  to define, for an attractive simple fixed point,  $L_{2,max/min}(T|_{U_q})$    as the $L^2-$Lefschetz numbers of $T$  acting on the maximal/minimal extension of \eqref{messina} and that, under the hypothesis of Theorem \ref{edimburgo}, it satisfies 
\begin{equation}
\label{kigali}
L_{2,max/min}(T|_{U_q})=\lim_{t\rightarrow 0}\sum_{i=0}^n(-1)^i\int_{U_q-\{q\}}\tr(\phi_i\circ e^{-t\mathcal{P}_{abs/rel,i}}(rA(p),B(p),r,p)dvol_g.
\end{equation}
Now we proceed making another remark before the conclusion.\\
As showed in the second section, $T_i^*$, the adjoint of $T_i$, has the following form: 
\begin{equation}
\label{benoni}
T_i^*=\theta_i\circ (f^{-1})^*
\end{equation} 
where $\theta_i=\tau\phi_i^*$ with $\tau$ positive or negative function respectively if $f$ preserves or reverses the orientation. Moreover, a simple computation, shows that $T^*$ is an endomorphism of the following Fredholm complexes: $(L^2(M,E_i),P^t_{max/min,i})$. By the fact that, if $Q:H\rightarrow H$ is a trace-class operator acting on the Hilbert space $H$ then also $Q^*$ is trace-class and $\Tr(Q)=\Tr(Q^*)$, it follows that 
\begin{equation}
\label{panicale}
\Tr(T_i\circ e^{-t\mathcal{P}_{abs/rel,i}})=\Tr( e^{-t\mathcal{P}_{abs/rel,i}}\circ T_i^*)= \Tr(T^*_i\circ  e^{-t\mathcal{P}_{abs/rel,i}}).
\end{equation} 
In particular, from \eqref{panicale}, it follows  that:
\begin{equation}
\label{talamello}
L_{2,max/min}(T)=L_{2,min/max}(T^*)
\end{equation}
where $T$ acts on $(L^2(M,E_i),P_{max/min,i})$ and $T^*$ acts on $(L^2(M,E_i),P^t_{min/max,i})$.\\A second consequence is the following: consider a point $q\in sing(X)$ such that $q$ is a repulsive simple fixed point. Clearly, by the fact that $f$ on $U_q\cong C_2(L_q)$ takes the form $f=(rA(p),B(p))$ it follows that $f^{-1}=(rG(p), B^{-1}(p))$ where $G=\frac{1}{A\circ B^{-1}}$. The fact that $q$ is repulsive means that $A>1$. Therefore it follows that $q$ is an \textbf{attractive} simple fixed point  for $T^*$.\\Finally we are in position to conclude with the following results: 

%

\begin{cor}
\label{vasto}
In the same hypothesis of Theorem \ref{edimburgo};
 Suppose moreover that  $q\in  sing(X)$ is an attractive fixed point. Then $$\sum_{i=0}^{n}(-1)^i\zeta_{T_i,q}(\mathcal{P}_{abs/rel,i})(0)=L_{2,max/min}(T|_{U_q}).$$ In particular this tells us that $\sum_{i=0}^{n}(-1)^i\zeta_{T_i,q}(\mathcal{P}_{abs/rel,i})(0)$ has a geometric meaning itself.
\end{cor}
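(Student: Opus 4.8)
The plan is to deduce the identity directly from Theorem \ref{edimburgo} and from the discussion that precedes the statement, in particular from the fact that $L_{2,max/min}(T|_{U_q})$ is well defined for an attractive fixed point and from formula \eqref{kigali}. First I would record that, since $q$ is an attractive simple fixed point, Definition \ref{foggia} gives $A(0,p)<1$ for all $p\in L_q$, hence $f(U_q)\subset U_q$; therefore $T$ restricts to a geometric endomorphism of the complex \eqref{messina}, and by the results of \cite{MAL} recalled just above the Hilbert complexes $(L^2(U_q,E_i|_{U_q}),(P|_{U_q})_{max/min,i})$ are Fredholm while each $e^{-t(\mathcal{P}|_{U_q})_{abs/rel,i}}$ is trace-class. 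Consequently $L_{2,max/min}(T|_{U_q})$ makes sense, and applying Theorem \ref{santiago} to the restricted complex (in the general form noted in Remark \ref{marsiglia}), together with Corollary \ref{sandiego}, yields exactly \eqref{kigali}, that is
\begin{equation*}
L_{2,max/min}(T|_{U_q})=\lim_{t\rightarrow 0}\sum_{i=0}^n(-1)^i\int_{U_q-\{q\}}\tr(\phi_i\circ e^{-t\mathcal{P}_{abs/rel,i}}(rA(p),B(p),r,p))dvol_g .
\end{equation*}

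Next I would invoke the locality of the heat kernel near the conical point, i.e. formula \eqref{semonte}, which identifies the small-$t$ limit of the integral over $U_q-\{q\}$ of the restricted heat kernel with the corresponding limit for the ambient operator $\mathcal{P}_{abs/rel,i}$. Then the chain of equalities already carried out in the proof of Theorem \ref{edimburgo} — using the hypotheses above Definition \ref{teramo} (conic metric $g=dr^2+r^2h$ with $h$ independent of $r$, coefficients $A_k$ constant in $x$, and $f$ of the form \eqref{derby}), the scaling property of Lemma \ref{ravenna}, the scalability of $\mathcal{P}_{abs/rel,i}$ from Lemma \ref{jesi}, and the substitution $x=t/r^{2\nu}$ — shows that for each $i$
\begin{equation*}
\lim_{t\rightarrow 0}\int_{U_q-\{q\}}\tr(\phi_i\circ e^{-t\mathcal{P}_{abs/rel,i}}(rA(p),B(p),r,p))dvol_g=\frac{1}{2\nu}\int_{0}^{\infty}\frac{dx}{x}\int_{L_q}\tr(\phi_i\circ e^{-x\mathcal{P}_{abs/rel,i}}(A(p),B(p),1,p))dvol_h,
\end{equation*}
which is precisely $\zeta_{T_i,q}(\mathcal{P}_{abs/rel,i})(0)$ by definition \eqref{rovigo}. (The convergence of the $x$-integral near $x=0$ is guaranteed, exactly as in the proof of Theorem \ref{edimburgo}, by the fact that $q$ being a simple fixed point forces $\{(A(p),B(p),1,p):p\in L_q\}$ to be a compact subset of $reg(C_2(L_q))\times reg(C_2(L_q))$ disjoint from the diagonal, so that the second item of Theorem \ref{casale} applies.) Summing over $i$ with signs $(-1)^i$ and comparing with the previous display gives the asserted equality.

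The only delicate point is the passage between the restricted operator $(\mathcal{P}|_{U_q})_{abs/rel,i}$ and the ambient operator $\mathcal{P}_{abs/rel,i}$: one must know that their heat kernels share the same $t\rightarrow 0$ asymptotics on compact subsets of $U_q-\{q\}$ staying away from the piece of $\partial U_q$ that is not $q$ — this is the content of the locality estimates cited from \cite{MAL}, pp. 42--43, and is what legitimizes \eqref{semonte}. Everything else is bookkeeping: once \eqref{kigali} and \eqref{semonte} are in hand, the corollary is an immediate consequence of the computation already performed for Theorem \ref{edimburgo}. The closing assertion — that $\sum_{i=0}^n(-1)^i\zeta_{T_i,q}(\mathcal{P}_{abs/rel,i})(0)$ has intrinsic geometric meaning — then follows because $L_{2,max/min}(T|_{U_q})$ is manifestly a Lefschetz-type invariant of the restricted complex, and in particular, by Proposition \ref{ancona} applied on $U_q$, it is independent of the auxiliary conic metric on $reg(U_q)$ and of the metrics $\rho_0,\dots,\rho_n$.
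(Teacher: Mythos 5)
Your argument is correct and follows essentially the same route as the paper: the paper's own proof is just "immediate from Theorem \ref{edimburgo} and \eqref{kigali}", where \eqref{kigali} is obtained, exactly as you do, by applying Theorem \ref{santiago}/Corollary \ref{sandiego} to the restricted complex \eqref{messina} (legitimate since $f(U_q)\subset U_q$ for an attractive point) and then using the locality statement \eqref{semonte} to replace the restricted heat kernel by the ambient one, after which the computation of Theorem \ref{edimburgo} identifies the limit with $\sum_i(-1)^i\zeta_{T_i,q}(\mathcal{P}_{abs/rel,i})(0)$. The only cosmetic remark is that your invocation of \eqref{semonte} should logically precede the assertion of \eqref{kigali} with the ambient operator, and the closing appeal to Proposition \ref{ancona} on $U_q$ is an unnecessary aside; neither affects the validity of the proof.
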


\begin{proof}
It follows immediately from Theorem \ref{edimburgo} and \eqref{kigali}.
\end{proof}

\begin{teo}
\label{pietrogrado}
In the same hypothesis of Theorem \ref{sacramento}. Suppose moreover that the first property stated at the beginning of the section holds.  Then we have: 
\begin{equation}
\label{casablanca}
L_{2,max/min}(T)=\sum_{p\in Fix(f)\cap M}\sum_{i=0}^n\frac{(-1)^i\Tr(\phi_i)}{|det(Id-d_qf)|}+\sum_{q\in  sing(X)}\sum_{i=0}^n(-1)^i\zeta_{T_i,q}(\mathcal{P}_{abs/rel,i})(0)
\end{equation}
where in \eqref{casablanca} the contribution given by the singular points is calculated fixing any conic metric $g$ on $reg(X)$ and any metrics $\rho_0,...,\rho_n$ on $E_0,...,E_n$ which satisfy the hypothesis of Theorem \ref{edimburgo}.\\
Moreover if each point $q\in sing(X)$ is an attractive fixed point we have:
\begin{equation}
\label{sierranevada}
L_{2,max/min}(T)=\sum_{p\in Fix(f)\cap M}\sum_{i=0}^n\frac{(-1)^i\Tr(\phi_i)}{|det(Id-d_qf)|}+\sum_{q\in  sing(X)} L_{2,max/min}(T|_{U_q}).
\end{equation}

while if each $q\in sing(X)$ is a   repulsive fixed point then we have :
\begin{equation}
\label{fondarca}
L_{2,max/min}(T)=\sum_{p\in Fix(f)\cap M}\sum_{i=0}^n\frac{(-1)^i\Tr(\theta_i)}{|det(Id-d_q(f^{-1}))|}+\sum_{q\in  sing(X)} L_{2,min/max}(T^*|_{U_q}).
\end{equation}
Finally we remark again that, when $\mathcal{P}_i$ is a generalized Laplacian,  the contribution given by the singular simplex fixed points, that is  $$\mathcal{L}_{max/min}(T,\mathcal{S})=\sum_{q\in  sing(X)}\sum_{i=0}^n(-1)^i\zeta_{T_i,q}(\mathcal{P}_{abs/rel,i})(0)$$ does not depend on the particular  conic metric that we fix on $reg(X)$ and on the metrics $\rho_0,...,\rho_n$ that we fix on $E_0,...,E_n$.
\end{teo}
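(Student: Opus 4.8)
The plan is to assemble \eqref{casablanca}, \eqref{sierranevada} and \eqref{fondarca} out of the results already proved in this section, so that essentially no new analysis is needed. For \eqref{casablanca} I would simply observe that the hypotheses here are exactly those of Theorem \ref{sacramento} together with the first property stated at the beginning of the section, and that this first property (added to the standing hypotheses, which are those of Theorem \ref{urbana}) is precisely what Theorem \ref{edimburgo} requires. Theorem \ref{sacramento} gives
$$L_{2,max/min}(T)=\sum_{q\in Fix(f)\cap M}\sum_{i=0}^n\frac{(-1)^i\Tr(\phi_i)}{|det(Id-d_qf)|}+\mathcal{L}_{max/min}(T,\mathcal{S}),$$
and substituting for $\mathcal{L}_{max/min}(T,\mathcal{S})$ the reformulation \eqref{domodossola} of Theorem \ref{edimburgo} yields \eqref{casablanca}. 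The closing assertion of the theorem, that $\mathcal{L}_{max/min}(T,\mathcal{S})=\sum_{q\in sing(X)}\sum_{i=0}^n(-1)^i\zeta_{T_i,q}(\mathcal{P}_{abs/rel,i})(0)$ is independent of the conic metric and of $\rho_0,\dots,\rho_n$, is then immediate from the second point of Corollary \ref{adissabeba}.

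For \eqref{sierranevada} I would specialise to the case where every $q\in sing(X)$ is an attractive simple fixed point. Corollary \ref{vasto} then gives, at each such $q$, $\sum_{i=0}^n(-1)^i\zeta_{T_i,q}(\mathcal{P}_{abs/rel,i})(0)=L_{2,max/min}(T|_{U_q})$, and inserting this term by term into \eqref{casablanca} produces \eqref{sierranevada}.

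The repulsive case \eqref{fondarca} I would reduce to \eqref{sierranevada} by passing to the adjoint endomorphism. By \eqref{talamello} we have $L_{2,max/min}(T)=L_{2,min/max}(T^*)$, with $T^*=(T_0^*,\dots,T_n^*)$, $T_i^*=\theta_i\circ(f^{-1})^*$, acting on the dual Fredholm complexes $(L^2(M,E_i),P^t_{min/max,i})$. I would check that, after the natural reversal of the order of the bundles, $T^*$ is again a geometric endomorphism in the sense of Definition \ref{velletri} of an elliptic complex of differential cone operators (the $P_i^t$ lie in $\dif_0^{\mu,\nu}$ and the transposed symbol sequence is exact, so Definition \ref{perugia} holds; cf. Proposition \ref{lucca}), that the underlying diffeomorphism is $f^{-1}$ and the bundle maps are $\theta_i:(f^{-1})^*E_i\to E_i$, and that the extra hypotheses needed to invoke \eqref{sierranevada} persist: the coefficient operators of each $P_i^t$ are still $x$-independent, on $U_q$ one has $f^{-1}=(rG(p),B^{-1}(p))$ with $G=1/(A\circ B^{-1})$ of the form \eqref{derby}, the relevant laplacians are still generalized Laplacians, and (as recalled in the remark preceding Corollary \ref{vasto}) every repulsive simple fixed point of $f$ is an attractive simple fixed point of $f^{-1}$, hence of $T^*$. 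Applying \eqref{sierranevada} to $T^*$ and using $Fix(f)=Fix(f^{-1})$ then gives exactly \eqref{fondarca}, the regular contribution being $\sum_{q\in Fix(f)\cap M}\sum_{i=0}^n\frac{(-1)^i\Tr(\theta_i)}{|det(Id-d_q(f^{-1}))|}$ and the singular one $\sum_{q\in sing(X)}L_{2,min/max}(T^*|_{U_q})$.

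I expect the only point requiring care to be the bookkeeping of this last paragraph: verifying that reversing the order of the dual complex really does place $T^*$ within the exact framework of Definitions \ref{velletri} and \ref{perugia} and of the hypotheses at the start of the section, so that Corollary \ref{vasto} and formula \eqref{sierranevada} apply to $T^*$ verbatim, and that this reversal does not disturb the alternating signs --- which it does not, since the sign reconciliation is already built into \eqref{talamello} (through \eqref{panicale}) and the quantity $L_{2,min/max}(T^*|_{U_q})$ is defined intrinsically. Everything else is a direct appeal to Theorems \ref{sacramento} and \ref{edimburgo}, Corollaries \ref{adissabeba} and \ref{vasto}, and the identities \eqref{domodossola} and \eqref{talamello}.
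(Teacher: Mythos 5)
Your proposal is correct and follows essentially the paper's own proof: \eqref{casablanca} is obtained by combining Theorem \ref{sacramento} with Theorem \ref{edimburgo} (in the form \eqref{domodossola}), the attractive case \eqref{sierranevada} by Corollary \ref{vasto}, and the repulsive case \eqref{fondarca} by the adjoint argument resting on \eqref{benoni} and \eqref{talamello}, with Corollary \ref{adissabeba} supplying the metric-independence. The only (cosmetic) difference is the order in which Corollary \ref{adissabeba} is deployed: the paper invokes it first, to reduce without loss of generality to a product-type conic metric so that Theorem \ref{edimburgo} is applicable, whereas you apply Theorem \ref{edimburgo} directly and call on Corollary \ref{adissabeba} only for the closing independence assertion --- logically equivalent uses of the same ingredient.
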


\begin{proof}
As showed in Corollary \ref{adissabeba}, when each $\mathcal{P}_i$ is a generalized Laplacian, then $L_{2,max/min}(T)$, $\mathcal{L}(T,\mathcal{R})$ and $\mathcal{L}_{max/min}(T,\mathcal{S})$ do not depend on the conic metric we fix on $reg(X)$ and do not depend on the metrics we fix $\rho_0,...,\rho_n$ on $E_0,...,E_n$. Therefore, without loss of generality, we can assume that for each $q\in sing(X)$, using the isomorphism $\chi_q:U_q\rightarrow C_2(L_q)$ of \eqref{derby},  the conic metric $g$ satisfies $g=dr^2+r^2h$ with $h$  that does not depend on $r$ and that each metric $\rho_i$ on $E_i$ does not depend on $r$ in a neighborhood of $\partial \overline{M}$. In this way we are in position to apply Theorem \ref{edimburgo} and so \eqref{casablanca} follows combining the  theorems \ref{sacramento}  and \ref{edimburgo}. Moreover this tell us that, in \eqref{casablanca},  the contribution of the singular points  is well defined and does not depend on the metrics $g,\rho_0,...,\rho_n$ (satisfying the assumptions of  Theorem \ref{edimburgo}) used to calculate it.  
 The second assertion follows from Corollary \ref{vasto} while the last assertion follows from \eqref{benoni} and \eqref{talamello}.
\end{proof}

\begin{rem}
We stress on the fact that, unlike Theorem \ref{edimburgo}, in Theorem \ref{pietrogrado} there are not assumptions about the conic metric $g$ on $reg(X)$  and about  the metrics $\rho_0,...,\rho_n$ on $E_0,...,E_n$ respectively.
\end{rem}

Finally we conclude the section with the following comment.\\ The condition that we required at the beginning of the subsection for each operator $P_i$, that each $A_k$ does not depend on $x$, might appear as to  be too strong at  first right. Obviously this is indeed a  strong assumption but it is at the same time  quite natural because the most natural complex arising in differential geometry, the de Rham complex, satisfies this assumption.\\The requirement \eqref{derby}, about the behavior of $f$ near the point $p$, is justified by the idea to evaluate $\mathcal{L}_{max/min}(T,\mathcal{S})$ using the scaling invariance of the heat kernel, see Lemma \ref{ravenna}. In fact if $f=(rA(r,p),B(r,p))$ then, after  the scaling invariance is used, we get in our expression the term $\tr(\phi_i\circ e^{-tr^{-2\nu}\mathcal{P}_{abs/rel,i}}(A(r,p),B(r,p),1,p))$. To have that this last expression make sense we need that  $(A(r,p),B(r,p),1,p)\in \mathcal{G}(f)$,  where $\mathcal{G}(f)\subset X\times X$ is the graph of $f$, and therefore this leads us to assume \eqref{derby}.

\subsection{The case of a short complex}
The aim of this subsection is to give a formula for the $L^2-$Lefschetz numbers in the particular case of a short complex, that is is an elliptic conic operator $P:C^{\infty}_{c}(M,E)\rightarrow C^{\infty}_{c}(M,E)$, using the result stated in Proposition \ref{saronno}. 
To do this we start describing  our geometric situation which is the same of  the previous results with some additional requirements: let $X$ be a compact and oriented manifold with conical singularities of dimension $m+1$. Let $M$ be its regular part and let $\overline{M}$ be the compact manifold with boundary which desingularize $X$. Endow $M$ with a conic metric $g$.
Let $(E,\rho)$ be a vector bundle endowed with a metric (riemannian or hermitian)  according if $E$ is complex or real. Let $(\overline{E},\rho)$ be the extension of $(E,\rho)$ over $\overline{M}$.  Let $T=(T_1,T_2)$  be a geometric endomorphism  where, as we already know, $T_i=\phi_i\circ f^*$ with $f:\overline{M}\rightarrow \overline{M}$ is a diffeomorphism as described in Definition \ref{velletri} and $\phi:f^*E\rightarrow E$ a bundle homorphism. Suppose that $Fix(f)$ is made only by simple fixed points. Finally, suppose that in each neighborhood $U_q\cong C_2(L_q)$ of $q\in sing(X)$  the operator $P$ take the form 
\begin{equation}
\label{munster}
P=\frac{n}{2r}+\frac{\partial}{\partial r}+\frac{1}{r}S
\end{equation}
 where $S\in \dif^1(N,E_N)$
 is an elliptic operator and the map $f$ take the form
\begin{equation}
\label{bergamo}
f=(rc,B(p)),\ c\neq 1
\end{equation}
where $c>0$ and depends only on $q$.

\begin{teo}
\label{granada}
In the same hypothesis of Theorem \ref{edimburgo}; suppose moreover that the properties described above hold. Then for each $q\in sing(X)$ we have: 
\begin{equation}
\label{lajolla}
\zeta_{T_0,q}(P_{max}^t\circ P_{min})(0)=\frac{c^{\frac{1-n}{2}}}{4}\int_0^{\infty}e^{-\frac{u(c^2+1)}{4}}\sum_{\lambda\in \spe S}I_{p^+(\lambda)}(\frac{uc}{2})du\Tr(\tilde{\Phi}_{0,\lambda,q})
\end{equation}
and analogously
\begin{equation}
\label{sandonaci}
\zeta_{T_1,q}(P_{min}\circ P_{max}^t)(0)=\frac{c^{\frac{1-n}{2}}}{4}\int_0^{\infty}e^{-\frac{u(c^2+1)}{4}}\sum_{\lambda\in \spe S}I_{p^-(\lambda)}(\frac{uc}{2})du\Tr(\tilde{\Phi}_{1,\lambda,q})
\end{equation}
where $$\Tr(\tilde{\Phi}_{j,\lambda,q})=\int_{L_q}\tr(\phi_j\Phi_{\lambda,q}(B(p),p))dvol_h,\   j=0,1.$$
\end{teo}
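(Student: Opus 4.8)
The plan is to feed the explicit heat kernel of Proposition \ref{saronno}(2) into the defining integral \eqref{rovigo} and then perform a single change of variables. First I would fix $q\in sing(X)$ and collect the bookkeeping. By \eqref{munster} the operator $P$ is, near $q$, a first order cone operator of weight $\nu=1$, so the prefactor $\tfrac{1}{2\nu}$ in \eqref{rovigo} equals $\tfrac12$. Under the standing assumptions of Theorem \ref{edimburgo} the metric $h$ on $L_q$ and the bundle metric $\rho$ are independent of $r$ near $\partial\overline{M}$, so on $reg(C_2(L_q))$ the pair $(E,\rho)$ is isometric to a pullback $\pi^*(E_{L_q},\rho|_{L_q})$; together with \eqref{munster} this puts us exactly in the situation of Proposition \ref{saronno}(2), which identifies the Schwartz kernels of $P^t_{max}\circ P_{min}$ (this is $\mathcal{P}_{rel,0}$) and $P_{min}\circ P^t_{max}$ (this is $\mathcal{P}_{rel,1}$) by the series \eqref{tapso} and its companion. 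Finally, by \eqref{bergamo} we have $A(p)\equiv c$ with $c\neq1$, so the section $p\mapsto(A(p),B(p),1,p)=(c,B(p),1,p)$ of the graph of $f$ avoids the diagonal of $reg(C_2(L_q))\times reg(C_2(L_q))$ for every $p\in L_q$ --- this is the off-diagonality used after \eqref{rovigo} to make $\zeta_{T_j,q}$ entire.

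Next I would substitute and integrate. Evaluating \eqref{tapso} at $(r,p,s,q)=(c,B(p),1,p)$ and $t=x$, applying $\phi_0$, taking the fibrewise trace, integrating over $L_q$ and recalling $\Tr(\tilde\Phi_{0,\lambda,q})=\int_{L_q}\tr(\phi_0\,\Phi_{\lambda,q}(B(p),p))\,dvol_h$ turns the inner integral of \eqref{rovigo} into
\[
\int_{L_q}\tr\bigl(\phi_0\circ e^{-xP^t_{max}\circ P_{min}}(c,B(p),1,p)\bigr)\,dvol_h=\frac{c^{\frac{1-n}{2}}}{2x}\,e^{-\frac{c^2+1}{4x}}\sum_{\lambda\in\spe S}I_{p^+(\lambda)}\!\Bigl(\frac{c}{2x}\Bigr)\Tr(\tilde\Phi_{0,\lambda,q}),
\]
where interchanging $\sum_{\lambda\in\spe S}$ with $\int_{L_q}$ is legitimate because, away from the diagonal, the series \eqref{tapso} converges in $C^0$ on compact sets (part of the content of Proposition \ref{saronno}). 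Inserting this into \eqref{rovigo} with $\nu=1$, $s=0$, factoring out the $\lambda$-independent terms, and substituting $u=1/x$ (so that $dx/x^2=-du$ and the endpoints swap) gives
\[
\zeta_{T_0,q}(P^t_{max}\circ P_{min})(0)=\frac{c^{\frac{1-n}{2}}}{4}\sum_{\lambda\in\spe S}\Tr(\tilde\Phi_{0,\lambda,q})\int_0^{\infty}e^{-\frac{u(c^2+1)}{4}}I_{p^+(\lambda)}\!\Bigl(\frac{uc}{2}\Bigr)\,du,
\]
which is \eqref{lajolla} (up to reordering the sum and the $u$-integral); the formula \eqref{sandonaci} follows verbatim with $p^+(\lambda)$ replaced by $p^-(\lambda)$ and $\phi_0$ by $\phi_1$, using the companion of \eqref{tapso}.

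The only genuinely technical point, and where I expect the most care is needed, is justifying the interchange of the sum over $\spe S$ with the $x$- (equivalently $u$-) integral, together with convergence of the integral itself; here the hypothesis $c\neq1$ is decisive. After the substitution each summand is $\int_0^{\infty}e^{-u(c^2+1)/4}I_{p^\pm(\lambda)}(uc/2)\,du$, which converges at $u=0$ since $p^\pm(\lambda)>-1$ and at $u=\infty$ since $I_a(z)\sim e^z/\sqrt{2\pi z}$ makes the effective exponent $-\tfrac u4(c^2+1)+\tfrac{uc}{2}=-\tfrac u4(c-1)^2$ strictly negative. Moreover $p^\pm(\lambda)\to\infty$ with $\lambda$, so $I_{p^\pm(\lambda)}(uc/2)$ decays rapidly in $\lambda$, while $\Tr(\tilde\Phi_{j,\lambda,q})$ is controlled by the same elliptic estimates for $S$ underlying Proposition \ref{saronno}; hence dominated convergence applies and all the interchanges above are justified.
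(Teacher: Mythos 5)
Your proposal is correct and follows essentially the paper's own route: both arguments consist of inserting the explicit Bessel-series kernels of Proposition \ref{saronno}(2) for $e^{-tP^t_{max}\circ P_{min}}$ and $e^{-tP_{min}\circ P^t_{max}}$ evaluated along the graph point $(c,B(p),1,p)$, and then a single change of variables together with the asymptotics $I_a(z)\sim e^z/\sqrt{2\pi z}$ and $c\neq 1$ to identify the convergent integral $\frac{c^{(1-n)/2}}{4}\int_0^\infty e^{-u(c^2+1)/4}\sum_\lambda I_{p^\pm(\lambda)}(uc/2)\,du$. The only cosmetic difference is that you substitute $u=1/x$ directly in the zeta-integral \eqref{rovigo} (with $\nu=1$), whereas the paper recomputes $\lim_{t\to 0}\int_{reg(C_2(L_q))}\tr(T_j\circ e^{-t\mathcal{P}_{rel,j}})\,dvol_g$ and substitutes $u=r^2/t$; these are equivalent in view of the proof of Theorem \ref{edimburgo}.
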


\begin{proof}
We give the proof only for \eqref{lajolla} because for \eqref{sandonaci} is completely analogous.
To prove the assertion we have to calculate $$\lim_{t\rightarrow 0}\int_{reg(C_2(L_q))}\tr(T_0\circ e^{-P_{max}^t\circ P_{min}})dvol_g.$$  By the assumptions we are in position to use  the second statement of Proposition \ref{saronno} and  therefore it is clear that the smooth kernel of $T_0\circ e^{-P_{max}^t\circ P_{min}}$ is 
\begin{equation}
\label{medina}
\sum_{\lambda \in \spe S}\frac{1}{2t}(crs)^{\frac{1-n}{2}}I_{p^+(\lambda)}(\frac{crs}{2t})e^{-\frac{c^2r^2+s^2}{4t}}\phi_0\Phi_{\lambda}(B(p),q)
\end{equation}
In this way we have to calculate $$\lim_{t\rightarrow 0}\int_0^{2}\sum_{\lambda \in \spe S}\frac{1}{2t}(cr^2)^{\frac{1-n}{2}}I_{p^+(\lambda)}(\frac{cr^2}{2t})e^{-\frac{r^2(c^2+1)}{4t}}r^mdr\int_{L_q}\tr(\phi_0\Phi_{\lambda}(B(p),q))dvol_h.$$Clearly $\int_{L_q}\tr(\phi_0\Phi_{\lambda}(B(p),q))dvol_h$ does not depend on $t$ and so, if we label it $\Tr(\tilde{\Phi}_{0,\lambda,q})$, our task now is to calculate $$\lim_{t\rightarrow 0}\int_0^{2}\sum_{\lambda \in \spe S}\frac{1}{2t}(cr^2)^{\frac{1-n}{2}}I_{p^+(\lambda)}(\frac{cr^2}{2t})e^{-\frac{r^2(c^2+1)}{4t}}r^mdr.$$ To do this put $\frac{r^2}{t}=u$. Then $rdr=\frac{tdu}{2}$. Moreover when $r$ goes to 2 $u$ goes to $\frac{4}{t}$ while when $r$ goes to $0$ $u$ goes to zero. So, applying this change of variable, we get
$$\lim_{t\rightarrow 0}\frac{c^{\frac{1-n}{2}}}{4}\int_0^{\frac{4}{t}}e^{-\frac{u(c^2+1)}{4}}\sum_{\lambda\in \spe S}I_{p^+(\lambda)}(\frac{uc}{2})du.$$ Now, by the asymptotic behavior of the integrand, we know that this limit exists and is equal to $$\frac{c^{\frac{1-n}{2}}}{4}\int_0^{\infty}e^{-\frac{u(c^2+1)}{4}}\sum_{\lambda\in \spe S}I_{p^+(\lambda)}(\frac{uc}{2})du.$$So we proved the statement.
\end{proof}

From Theorem \ref{granada} we have the following immediate corollary:
\begin{cor}
\label{santamarta}
In the same hypothesis of Theorem \ref{granada} but without any assumptions about the conic metric $g$ on $reg(X)$ and the metric $\rho$ on $E$. Suppose moreover that  $P^t\circ P:C^{\infty}_c(M,E)\rightarrow C^{\infty}_c(M,E)$ is a generalized Laplacian.
 Then we have the following formula:
\begin{equation}
\label{palmsprings}
L_{2,min}(T)=\sum_{q\in M\cap Fix(f)}\sum_{j=0}^1\frac{(-1)^j\Tr(\phi_j)}{|det(Id-d_qf)|}+
\end{equation} 
$$+\sum_{q\in  sing(X)}\frac{c^{\frac{1-n}{2}}}{4}\int_0^{\infty}e^{-\frac{u(c^2+1)}{4}}\sum_{\lambda\in \spe S}I_{p^+(\lambda)}(\frac{uc}{2})du\Tr(\tilde{\Phi}_{0,\lambda,q})+$$  $$-\sum_{q\in sing(X)}\int_0^{\infty}e^{-\frac{u(c^2+1)}{4}}\sum_{\lambda\in \spe S}I_{p^-(\lambda)}(\frac{uc}{2})du\Tr(\tilde{\Phi}_{1,\lambda,q})$$ where the contribution of the singular points  is calculated fixing any conic metric $g$ on $reg(X)$ and any metric $\rho$ on $E$ which satisfy the assumptions of Theorem \ref{granada}.
\end{cor}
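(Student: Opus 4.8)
The strategy is to combine the metric--independence of the $L^2$--Lefschetz numbers with the two computations already in hand: the global splitting of the Lefschetz number in Theorem \ref{pietrogrado} and the explicit evaluation of the conical contributions in Theorem \ref{granada}.

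First I would exploit the extra hypothesis that $P^t\circ P$ (and hence $P\circ P^t$) is a generalized Laplacian. By Corollary \ref{adissabeba} together with Theorem \ref{pietrogrado}, under this assumption the number $L_{2,min}(T)$ and the separate contributions $\mathcal{L}_{min}(T,\mathcal{R})$ and $\mathcal{L}_{min}(T,\mathcal{S})$ do not depend on the conic metric $g$ fixed on $reg(X)$ nor on the fibre metric $\rho$ on $E$. Hence, without loss of generality, I may fix a conic metric which on each $U_q\cong C_2(L_q)$ has the form $g=dr^2+r^2h$ with $h$ independent of $r$, and a metric $\rho$ on $E$ that is independent of $r$ in a collar of $\partial\overline{M}$. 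Together with the standing hypotheses carried over from Theorem \ref{granada} (in particular that near each conical point $P=\frac{n}{2r}+\frac{\partial}{\partial r}+\frac1rS$ with $S$ elliptic on $L_q$, and that $f=(rc,B(p))$ with $c\neq 1$), all the assumptions of Theorem \ref{edimburgo}, and a fortiori of Theorem \ref{granada}, are then satisfied.

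Next I would apply formula \eqref{casablanca} of Theorem \ref{pietrogrado} to the short complex $0\to C^\infty_c(M,E)\stackrel{P}{\rightarrow}C^\infty_c(M,E)\to 0$. For this complex the two laplacians entering the minimal Hilbert complex are $\mathcal{P}_{rel,0}=P^t_{max}\circ P_{min}$ and $\mathcal{P}_{rel,1}=P_{min}\circ P^t_{max}$, and since $P^t\circ P$ and $P\circ P^t$ are generalized Laplacians the regular fixed points contribute with the Atiyah--Bott local index $\Tr(\phi_j)/|det(Id-d_qf)|$ exactly as in Theorem \ref{sacramento}. Thus \eqref{casablanca} becomes
\begin{multline*}
L_{2,min}(T)=\sum_{q\in M\cap Fix(f)}\sum_{j=0}^1\frac{(-1)^j\Tr(\phi_j)}{|det(Id-d_qf)|}\\
+\sum_{q\in sing(X)}\left(\zeta_{T_0,q}(P^t_{max}\circ P_{min})(0)-\zeta_{T_1,q}(P_{min}\circ P^t_{max})(0)\right).
\end{multline*}

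Finally I would substitute into the conical sum the explicit values of $\zeta_{T_0,q}(P^t_{max}\circ P_{min})(0)$ and $\zeta_{T_1,q}(P_{min}\circ P^t_{max})(0)$ provided by \eqref{lajolla} and \eqref{sandonaci} of Theorem \ref{granada}; recalling that $\Tr(\tilde\Phi_{j,\lambda,q})=\int_{L_q}\tr(\phi_j\Phi_{\lambda,q}(B(p),p))\,dvol_h$, this produces precisely the formula \eqref{palmsprings}, while the asserted independence of the conical contribution from the particular metrics $g,\rho$ satisfying the hypotheses of Theorem \ref{granada} is nothing but the metric--independence recorded in the first step. There is no genuine obstacle in the argument; the only points that need a word of justification are the legitimacy of deforming the metric to a product--type conic metric near the singular points (which is exactly what Corollary \ref{adissabeba} guarantees) and the careful bookkeeping of signs and of the roles of $P_{min}$ versus $P_{max}$ in the short complex.
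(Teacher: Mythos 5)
Your argument is correct and is essentially the paper's own proof: the generalized-Laplacian hypothesis gives metric independence of the singular contribution (via Corollary \ref{adissabeba} and the argument in Theorem \ref{pietrogrado}), so one may pass to a product-type conic metric and $r$-independent $\rho$ near the conical points and then insert the values \eqref{lajolla} and \eqref{sandonaci} of Theorem \ref{granada} into the splitting \eqref{casablanca} specialized to the short complex, with $\mathcal{P}_{rel,0}=P^t_{max}\circ P_{min}$ and $\mathcal{P}_{rel,1}=P_{min}\circ P^t_{max}$ as you say. The only discrepancy is harmless: your substitution produces the factor $\frac{c^{\frac{1-n}{2}}}{4}$ also in front of the last singular sum, which appears to have been dropped (a typo) in the displayed formula \eqref{palmsprings}.
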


\begin{proof}
As observed in the proof of Theorem \ref{pietrogrado}, by the fact that $P^t\circ P$ is a generalized Laplacian, it follows that  $\mathcal{L}(T,\mathcal{S})$ does not depend on the conic metric we fix on $reg(X)$ and does not depend on the metric $\rho$ we fix on $E$. Therefore, without loss of generality, we can assume that  for each $q\in sing(X)$, using the isomorphism $\chi_q:U_q\rightarrow C_2(L_q)$ of \eqref{derby},  the conic metric $g$ satisfies $g=dr^2+r^2h$ with $h$  that does not depend on $r$ and that each metric $\rho_i$ on $E_i$ does not depend on $r$ in a neighborhood of $\partial \overline{M}$. In this way we are in position to apply Theorem \ref{granada} and therefore  \eqref{palmsprings} follows.
\end{proof}

\section{A thorough analysis of the de Rham case}
\subsection{Applications of the previous results}
As remarked previously, Theorems \ref{sacramento} and  \ref{pietrogrado}, Corollary \ref{vasto}  and in particular \eqref{casablanca} hold for the Hilbert complexes $(L^2\Omega^i(M,g),d_{max/min,i})$. More explicitly, we have the following result:
\begin{teo}
\label{bordeaux}
Let $X$ be a compact and oriented manifold with isolated conical singularities and of dimension $m+1$. Let $g$ be a conic metric over its regular part $reg(X)$. Let $f:X\rightarrow X$ be a map induced by a diffeomorphism $f:\overline{M}\rightarrow \overline{M}$ such that  $f:X\rightarrow X$ fixes each singular point of $X$. Consider $T:=(df)^*\circ f^*$, the natural   endomorphism  of the de Rham complex induced by $f$ . Finally suppose that $f$ has only simple fixed points.  Then we have:
\begin{equation}
\label{osimo}
L_{2,max/min}(T)=\sum_{q\in Fix(f)\cap reg(X)}\sgn det(Id-d_qf)+\mathcal{L}_{max/min}(T,\mathcal{S}).
\end{equation}
If in a neighborhood of each simple fixed point $q$ $f$ satisfies the condition described in \eqref{derby}, then we have:  $L_{2,max/min}(T)=$ 
\begin{equation}
\label{cametrino}
=\sum_{q\in Fix(f)\cap reg(X)}\sgn det(Id-d_qf)+\sum_{q\in  sing(X)}\sum_{i=0}^{m+1}(-1)^i\zeta_{T_i,q}(\Delta_{abs/rel,i})(0)
\end{equation}
where in \eqref{cametrino} the contribution of the singular points is calculated using any conic metric $g$ on $reg(X)$ such that, again through the isomorphism $\chi_q:U_q\rightarrow  C_2(L_q)$ of \eqref{derby}, $g$ takes the form  $dr^2+r^2h$ and $h$ does not depend on $r$.\\
In particular if  each $q\in sing(X)$ is an attractive simple fixed point then we have:  
\begin{equation}
\label{camerino}
L_{2,max/min}(T)=\sum_{q\in Fix(f)\cap reg(X)}\sgn det(Id-d_qf)+\sum_{q\in  sing(X)}L_{2,max/min}(T|_{U_q}).
\end{equation}
while if  each $q\in sing(X)$ is a repulsive  simple fixed point then we have:  
\begin{equation}
\label{camerinone}
L_{2,max/min}(T)=\sum_{q\in Fix(f)\cap reg(X)}\sgn det(Id-d_q(f^{-1}))+\sum_{q\in  sing(X)}L_{2,min/man}(T^*|_{U_q}).
\end{equation}
Moreover in \eqref{cametrino} the member on the right, that is $\mathcal{L}_{max/min}(T,\mathcal{S})$, does not depend on the particular conic metric that we fix on $reg(X)$.
\end{teo}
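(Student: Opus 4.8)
The plan is to recognize the $L^{2}$ de Rham complex built from the conic metric $g$ as an elliptic complex of differential cone operators whose laplacians are generalized Laplacians, and then to specialize Theorems \ref{sacramento} and \ref{pietrogrado} and Corollary \ref{vasto}. First I would check the structural facts. Each $d_{i}:C^{\infty}_{c}(M,\Lambda^{i}T^{*}M)\to C^{\infty}_{c}(M,\Lambda^{i+1}T^{*}M)$ lies in $\dif_{0}^{1,1}(M,\Lambda^{i}T^{*}M,\Lambda^{i+1}T^{*}M)$: in a collar where $g=dr^{2}+r^{2}h$ with $h$ independent of $r$, writing forms in the orthonormal coframe $dr,r\theta^{1},\dots,r\theta^{m}$ produces the expression $r^{-1}\sum_{k}A_{k}(-r\partial_{r})^{k}$ with the $A_{k}$ independent of $r$. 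By Proposition \ref{lucca} the complex is then elliptic if and only if $\Delta_{i}=d_{i}^{t}d_{i}+d_{i-1}d_{i-1}^{t}$ is an elliptic cone operator, which is Cheeger's classical fact; and $\Delta_{i}$ is a generalized Laplacian since its principal symbol is $\|\xi\|^{2}Id$. Finally $T=(df)^{*}\circ f^{*}$ is a geometric endomorphism in the sense of Definition \ref{velletri} with bundle maps $\phi_{i}=\Lambda^{i}((df)^{t})$: by hypothesis $f:\overline M\to\overline M$ is a diffeomorphism fixing every singular point, $T_{i}=\phi_{i}\circ f^{*}$ by construction, and $d\circ f^{*}=f^{*}\circ d$ is the naturality of the exterior derivative. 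Hence Theorems \ref{sacramento}, \ref{pietrogrado} and Corollaries \ref{vasto}, \ref{adissabeba} apply to $T$.

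Formula \eqref{osimo} is then immediate from Theorem \ref{sacramento}: at a fixed point $q\in reg(X)$ the fibre endomorphism $\phi_{i}|_{q}$ equals $\Lambda^{i}((d_{q}f)^{t})$, so $\Tr(\phi_{i}|_{q})=\Tr(\Lambda^{i}d_{q}f)$ and, by the identity $\sum_{i}(-1)^{i}\Tr(\Lambda^{i}L)=\det(Id-L)$, one gets $\sum_{i=0}^{m+1}(-1)^{i}\Tr(\phi_{i}|_{q})=\det(Id-d_{q}f)$; dividing by $|\det(Id-d_{q}f)|$ gives $\sgn\det(Id-d_{q}f)$, while the singular part is $\mathcal{L}_{max/min}(T,\mathcal{S})$ by definition. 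Assuming in addition \eqref{derby} near every fixed point, \eqref{cametrino} is obtained by substituting this computation of the regular contribution into \eqref{casablanca} of Theorem \ref{pietrogrado} and using the definition \eqref{rovigo} of $\zeta_{T_{i},q}$. That the singular contribution in \eqref{cametrino} may be evaluated with any product-type conic metric, and the final metric-independence statement, both follow from Corollary \ref{adissabeba}(2), since $\Delta_{i}$ is a generalized Laplacian; this also lets us first reduce to a metric of the form $dr^{2}+r^{2}h$ near each $q\in sing(X)$, as required to invoke Theorem \ref{pietrogrado}.

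For the attractive case \eqref{camerino} I would apply Corollary \ref{vasto} to each attractive fixed point $q\in sing(X)$, which identifies $\sum_{i}(-1)^{i}\zeta_{T_{i},q}(\Delta_{abs/rel,i})(0)$ with $L_{2,max/min}(T|_{U_{q}})$, and substitute into \eqref{cametrino}. For the repulsive case \eqref{camerinone} I would use \eqref{talamello}, $L_{2,max/min}(T)=L_{2,min/max}(T^{*})$, together with \eqref{benoni}: $T^{*}$ is a geometric endomorphism with underlying diffeomorphism $f^{-1}$, and since each $q\in sing(X)$ is repulsive for $f$ it is attractive for $f^{-1}$ (near $q$, $f^{-1}=(rG(p),B^{-1}(p))$ with $G=\frac{1}{A\circ B^{-1}}<1$); applying the attractive case already proved to $T^{*}$ turns the singular terms into $L_{2,min/max}(T^{*}|_{U_{q}})$ and leaves a term over the regular fixed points. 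The step I expect to be the main obstacle is exactly this last one: checking that $T^{*}$ genuinely satisfies the hypotheses under which the attractive formula was established, and carrying out the linear algebra and Jacobian bookkeeping at a regular fixed point --- tracking the bundle endomorphism $\theta_{i}=\tau\phi_{i}^{*}$ and the factor $\tau$ determined by $f^{*}dvol_{g}=\tau\,dvol_{g}$ --- so that the corresponding ratio collapses to $\sgn\det(Id-d_{q}f^{-1})$. Everything else is routine once the de Rham complex and $T$ have been fitted into the framework of the previous sections.
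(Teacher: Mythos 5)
Your proposal is correct and follows essentially the same route as the paper: formula \eqref{osimo} is deduced from Theorem \ref{sacramento} together with the standard linear-algebra identity $\sum_i(-1)^i\Tr(\Lambda^i d_qf)=\det(Id-d_qf)$ at regular fixed points, \eqref{cametrino} from Theorem \ref{pietrogrado} (with metric-independence via Corollary \ref{adissabeba}), and \eqref{camerino}, \eqref{camerinone} from Corollary \ref{vasto} and the duality \eqref{benoni}--\eqref{talamello}, exactly as in the paper's proof of Theorem \ref{pietrogrado}. The only difference is that you spell out the verification that the de Rham complex and $T=(df)^*\circ f^*$ fit the cone-operator framework, which the paper treats as already established.
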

\begin{proof}
\eqref{osimo}   follows immediately from Theorem \ref{sacramento}. In particular the expression for $\mathcal{L}_{max/min}(T,\mathcal{R})$ follows by a standard argument of linear algebra; see for example \cite{ABL2} or \cite{JR}.  \eqref{cametrino} follows  as in the proof of Theorem \eqref{pietrogrado}; in particular, as remarked in the proof of Lemma \ref{ravenna}, the scaling invariance property for the heat operator associated to positive self-adjoint extension of $\Delta_i$, was proved by  Cheeger in \cite{JC}.  Finally \eqref{camerino} and \eqref{camerinone} follows again from Theorem \ref{pietrogrado}.
\end{proof}

By the assumptions on $f$ it follows that   $f(sing(X))= sing(X)$ and $f(reg(X))= reg(X).$ This implies, see for example \cite{GM}, that  if we fix a perversity $p$ then $f$ induces a well defined map, $f^*$,  between the intersection cohomology groups respect to the  perversity $p$. In particular we have $f^*:I^{\overline{m}}H^i(X)\rightarrow I^{\overline{m}}H^i(X)$ and $f^*:I^{\underline{m}}H^i(X)\rightarrow I^{\underline{m}}H^i(X)$. Therefore it is natural to define in this context, as it is showed in \cite{GMAC}, the \textbf{intersection Lefschetz number} respects to a given perversity $p$ as 
\begin{equation}
\label{montalto}
I^pL(f)=\sum_{i=0}^n\tr(f^*:I^pH^i(X)\rightarrow I^pH^i(X)).
\end{equation}
$I^pL(f)$ is deeply studied, from a topological point of view,  in \cite{GMAC} and \cite{GOMAC} in the more general context of a stratified pseudomanifold; our goal in the next corollaries is to give an analytic description of $I^{\overline{m}}L(f)$ and $I^{\underline{m}}L(f)$ when $X$ is a compact manifold with conical singularities. In particular in  \eqref{wolfsburg} we will give an analytic proof of a formula already proved in \cite{GMAC}.
So, using Theorem \ref{osimo} and Theorem \ref{stia}, we get the following results:
\begin{prop}
\label{abbiategrasso}
In the same hypothesis of Theorem \ref{bordeaux};  let  $q\in  sing(X)$  be  an attractive fixed point . Let $U_q$ be an open neighborhood of $q$ isomorphic to $C_2(L_q)$ and suppose that $f$ satisfies \eqref{derby} and $g$ takes the form $g=dr^2+r^2h$ where $h$ does not depend on $r$.  Then,  for $i<\frac{m+1}{2}$,  we have:
\begin{equation}
\label{escorial}
\Tr((f|_{U_q})^*:H^i_{2,max}(U_q,g|_{U_q})\rightarrow H^i_{2,max}(U_q,g|_{U_q}) )=\Tr(B^*:H^i(L_q)\rightarrow H^i(L_q))
\end{equation} 
\end{prop}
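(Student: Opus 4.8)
The plan is to identify, via Cheeger's computation of the $L^2$-cohomology of a metric cone (Theorem \ref{ragusa}), the endomorphism $(f|_{U_q})^*$ of $H^i_{2,max}(U_q,g|_{U_q})$ with the pull-back $B^*$ of $H^i(L_q)$ up to conjugation by an isomorphism, and then to conclude by invariance of the trace under conjugation. First I would record the preliminary reductions. Since $q$ is an attractive fixed point, \eqref{derby} reads $f(r,p)=(rA(p),B(p))$ with $0<A(p)<1$ for all $p\in L_q$, so $f(U_q)\subseteq U_q$ and $f|_{U_q}$ is a genuine self-map of $U_q\cong C_2(L_q)$; as in the discussion preceding Corollary \ref{vasto}, Lemma \ref{sorano} and Proposition \ref{sanrocco} applied to the de Rham complex \eqref{messina} show that $(f|_{U_q})^*$ is a bounded chain endomorphism of $(L^2\Omega^\bullet(U_q,g|_{U_q}),d_{max})$, hence induces the endomorphism of $H^i_{2,max}(U_q,g|_{U_q})$ appearing in the statement, and these complexes are Fredholm so this space is finite dimensional. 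Because $\dim L_q=m$, Theorem \ref{ragusa} gives $H^i_{2,max}(U_q,g|_{U_q})\cong H^i(L_q)$ for $i<\tfrac{m+1}{2}$, so both traces in \eqref{escorial} make sense and it suffices to produce an isomorphism conjugating $(f|_{U_q})^*$ to $B^*$.

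For the core of the argument I would realise Cheeger's isomorphism explicitly. Let $\pi\colon reg(U_q)=L_q\times(0,2)\to L_q$ be the projection and, for $r\in(0,2)$, let $\iota_r\colon L_q\to reg(U_q)$, $\iota_r(p)=(r,p)$, so $\pi\circ\iota_r=\mathrm{id}_{L_q}$. A direct estimate with the volume measure $r^m\,dr\,dvol_h$ shows that for $i<\tfrac{m+1}{2}$ one has $\pi^*\gamma\in L^2\Omega^i(U_q,g|_{U_q})$ whenever $\gamma\in\Omega^i(L_q)$ is closed, so $\pi^*$ descends to a map $H^i(L_q)\to H^i_{2,max}(U_q,g|_{U_q})$; dually, restriction to a cross-section $\{r\}\times L_q$ (applied to the $L^2$-representatives that are smooth on $reg(U_q)$ — available by Proposition \ref{dfff} together with elliptic regularity, and forming a subcomplex computing $H^\bullet_{2,max}(U_q,g|_{U_q})$ that is preserved by $(f|_{U_q})^*$) gives a chain map inducing $\iota_r^*$ in the other direction. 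Since $\iota_r^*\pi^*=\mathrm{id}$ on $H^i(L_q)$ and $\dim H^i_{2,max}(U_q,g|_{U_q})=\dim H^i(L_q)$ by Theorem \ref{ragusa}, both $\pi^*$ and $\iota_r^*$ are isomorphisms in this range; write $\kappa:=\iota_r^*$ for the resulting isomorphism. More generally I would check that any two smooth maps $L_q\to reg(U_q)$ homotopic through maps into $reg(U_q)$ induce the same map $H^\bullet_{2,max}(U_q,g|_{U_q})\to H^\bullet(L_q)$: this is the usual chain-homotopy formula, which is legitimate here because $L_q$ and $[0,1]\times L_q$ are compact, so the relevant pull-backs and the homotopy operator automatically land in $L^2$ (in particular $\kappa$ does not depend on $r$).

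With $\kappa$ in hand the computation is short: $\kappa\circ(f|_{U_q})^*=(f|_{U_q}\circ\iota_r)^*$, and $f|_{U_q}\circ\iota_r$ sends $p$ to $(rA(p),B(p))$; since $0<rA(p)<r<2$, the linear homotopy $(t,p)\mapsto\bigl((1-t)rA(p)+ts,\,B(p)\bigr)$ with $s\in(0,2)$ fixed stays inside $reg(U_q)$ and connects $f|_{U_q}\circ\iota_r$ to $\iota_s\circ B$. Hence $(f|_{U_q}\circ\iota_r)^*=(\iota_s\circ B)^*=B^*\circ\iota_s^*=B^*\circ\kappa$, so $\kappa\circ(f|_{U_q})^*=B^*\circ\kappa$, i.e. $(f|_{U_q})^*$ and $B^*$ are conjugate, and taking traces yields \eqref{escorial}. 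The step I expect to require the most care is the one in the second paragraph: pinning down that the isomorphism of Theorem \ref{ragusa} is induced by restriction to a cross-section, and that this restriction map together with the chain homotopies genuinely operates at the level of a subcomplex of smooth $L^2$-forms that is preserved by $(f|_{U_q})^*$ — this is where the hypotheses $g=dr^2+r^2h$ with $h$ independent of $r$, the degree bound $i<\tfrac{m+1}{2}$, and the compactness of $L_q$ all intervene, and where I would lean on \cite{JEC}.
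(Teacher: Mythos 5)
Your proposal is correct and follows essentially the same route as the paper: Cheeger's isomorphism $H^i_{2,max}(U_q,g|_{U_q})\cong H^i(L_q)$ for $i<\frac{m+1}{2}$ is realised by $\pi^*$ with inverse the restriction (evaluation) on a slice $\{r\}\times L_q$, and $(f|_{U_q})^*$ is conjugated to $B^*$ through this isomorphism, giving equality of traces. The only difference is cosmetic: since $\pi\circ (f|_{U_q})=B\circ\pi$ holds exactly (because $f=(rA(p),B(p))$ and $\pi$ forgets the radial variable), the intertwining $(f|_{U_q})^*\circ\pi^*=\pi^*\circ B^*$ already holds at the level of forms, so your homotopy from $f|_{U_q}\circ\iota_r$ to $\iota_s\circ B$ is not needed.
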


\begin{proof}
 As it is showed in \cite{JEC}, in \eqref{stia} the isomorphism between $H^i_{2,max}(reg(C_2(L_q)),g)$ and $H^i(L_q)$, for $i<\frac{m}{2}+\frac{1}{2}$, is given by the pull-back $\pi^*$ where $\pi:(0,b)\times F\rightarrow F$ is the projection on the second factor and inverse is given by  $v_a$, the evaluation map in $a$,  where $a$ is any  point $(0,2)$. Now by the hypothesis, over $U_q$  $f$ can be written as $(rA(p), B(p))$. An immediate check shows that $\pi^*\circ B^*=B^*\circ \pi^*$ and therefore $\Tr((f|_{U_q})^*)=\Tr(B^*)$. 
\end{proof}

\begin{cor}
\label{compostela}
In the same hypothesis of Theorem \ref{bordeaux}, suppose moreover that near each point $q\in  sing(X)$ $f$ satisfies \eqref{derby}. Then  we have:
\begin{equation}
\label{jerevan}
I^{\underline{m}}L(f)=\sum_{q\in Fix(f)\cap reg(X)}\sgn det(Id-d_qf)+\sum_{q\in  sing(X)}\sum_{i=0}^{m+1}(-1)^i\zeta_{T_i,q}(\Delta_{abs,i})(0)
\end{equation}
and analogously
\begin{equation}
\label{baku}
I^{\overline{m}}L(f)=\sum_{q\in Fix(f)\cap reg(X)}\sgn det(Id-d_qf)+\sum_{q\in  sing(X)}\sum_{i=0}^{m+1}(-1)^i\zeta_{T_i,q}(\Delta_{rel,i})(0)
\end{equation}
Finally, if  $q\in sing(X)$ is an attractive fixed point,  then we have 
\begin{equation}
\label{genga}
\sum_{i=0}^{m+1}(-1)^i\zeta_{T_i,q}(\Delta_{abs,i})(0)=\sum_{i<\frac{m}{2}+\frac{1}{2}}(-1)^i\tr(B^*:H^i(L_q)\rightarrow H^i(L_q))
\end{equation}
and therefore from \eqref{jerevan} we get:
\begin{equation}
\label{wolfsburg}
I^{\underline{m}}L(f)=L_{2,max}(T)=
\end{equation}
$$\sum_{q\in Fix(f)\cap reg(X)}\sgn det(Id-d_qf)+\sum_{q\in  sing(X)}\sum_{i<\frac{m+1}{2}}(-1)^i\Tr(B^*:H^i(L_q)\rightarrow H^i(L_q)).$$

\end{cor}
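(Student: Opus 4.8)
The strategy is to feed the heat-kernel formula \eqref{cametrino} of Theorem \ref{bordeaux} into Cheeger's $L^2$-cohomology theorem (Theorem \ref{ragusa}). First I would record the identification of the analytic Lefschetz numbers with the intersection-cohomology ones. Since $f$ is induced by a diffeomorphism of $\overline{M}$ it preserves the stratification of $X$, hence acts on $I^{\underline{m}}H^i(X)$ and on $I^{\overline{m}}H^i(X)$, and the Cheeger isomorphisms $H^i_{2,max}(reg(X),g)\cong I^{\underline{m}}H^i(X)$, $H^i_{2,min}(reg(X),g)\cong I^{\overline{m}}H^i(X)$ of \eqref{aosta} are natural with respect to $f$; this follows from the explicit local description of these isomorphisms in \cite{JEC}, since pullback of $L^2$-forms under $f|_{reg(X)}$ intertwines with pullback of intersection cochains. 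Consequently the endomorphism of $H^i_{2,max}$ induced by the de Rham geometric endomorphism $T$ is conjugate to $f^*\colon I^{\underline{m}}H^i(X)\to I^{\underline{m}}H^i(X)$, so that $L_{2,max}(T)=I^{\underline{m}}L(f)$, and likewise $L_{2,min}(T)=I^{\overline{m}}L(f)$. Substituting these equalities into the two versions of \eqref{cametrino} yields \eqref{jerevan} and \eqref{baku} at once.

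Next I would treat \eqref{genga}. Fix an attractive simple fixed point $q\in sing(X)$ with $U_q\cong C_2(L_q)$. By Corollary \ref{vasto} the left-hand side of \eqref{genga} equals $L_{2,max}(T|_{U_q})$, the $L^2$-Lefschetz number of the de Rham complex on $U_q$, which by definition is $\sum_{i}(-1)^i\tr\big((f|_{U_q})^*\colon H^i_{2,max}(U_q,g|_{U_q})\to H^i_{2,max}(U_q,g|_{U_q})\big)$. Cheeger's cone computation \eqref{stia}, applied with $\dim L_q=m$ so that the cutoff is $\tfrac{m}{2}+\tfrac{1}{2}=\tfrac{m+1}{2}$, shows this space is $H^i(L_q)$ for $i<\tfrac{m+1}{2}$ and vanishes for $i\ge \tfrac{m+1}{2}$, while Proposition \ref{abbiategrasso} identifies the induced map on $H^i_{2,max}(U_q)$ with $B^*\colon H^i(L_q)\to H^i(L_q)$ in that range. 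Hence $L_{2,max}(T|_{U_q})=\sum_{i<\frac{m+1}{2}}(-1)^i\tr(B^*\colon H^i(L_q)\to H^i(L_q))$, which is \eqref{genga}. Finally, under the additional hypothesis that every $q\in sing(X)$ is an attractive simple fixed point, I would substitute \eqref{genga} into \eqref{jerevan} term by term over $sing(X)$; together with $I^{\underline{m}}L(f)=L_{2,max}(T)$ from the first step this gives \eqref{wolfsburg}.

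The step I expect to be the main obstacle is the first one: making precise that the Cheeger isomorphism is natural with respect to the stratified map $f$, so that the analytically defined trace $\tr(T_i^*\colon H^i_{2,max}\to H^i_{2,max})$ really coincides with the topologically defined $\tr(f^*\colon I^{\underline{m}}H^i(X)\to I^{\underline{m}}H^i(X))$. Everything else is assembly: \eqref{jerevan} and \eqref{baku} are \eqref{cametrino} rewritten, \eqref{genga} is Corollary \ref{vasto} combined with the cone cohomology \eqref{stia} and Proposition \ref{abbiategrasso}, and \eqref{wolfsburg} is a termwise substitution; the only bookkeeping is keeping the dimension $m+1$ and the perversity cutoff $i<\tfrac{m+1}{2}$ straight, which is routine.
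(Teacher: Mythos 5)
Your proposal is correct and follows essentially the same route as the paper: \eqref{jerevan} and \eqref{baku} come from Theorem \ref{bordeaux} together with Cheeger's identifications \eqref{aosta}, and \eqref{genga}--\eqref{wolfsburg} come from Corollary \ref{vasto}, the cone computation \eqref{stia} and Proposition \ref{abbiategrasso}. The only difference is one of exposition: you make explicit the naturality of Cheeger's isomorphism under $f^*$ (so that $L_{2,max}(T)=I^{\underline{m}}L(f)$ and $L_{2,min}(T)=I^{\overline{m}}L(f)$) and the choice of product-type conic metric near each singular point, steps the paper's proof leaves implicit by citing the preceding theorems.
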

\begin{proof}
As in Theorem \ref{bordeaux}, to get the Lefschetz numbers, we can use a conic metric $g$ such that, in each neighborhood $U_q$ of $q\in sing(X)$, using the isomorphism $\chi_q:U_q\rightarrow C_2(L_q)$, $g$ takes the form $g=dr^2+r^2h$ where $h$ does not depend on $r$. Now \eqref{jerevan} and \eqref{baku} follow immediately by the previously theorems. Finally \eqref{genga}  and \eqref{wolfsburg} follow  immediately from Proposition \ref{abbiategrasso}.  
\end{proof}
 Finally we have this last corollary; before stating it we recall that a manifold with conical singularities of dimension $m+1$ is a \textbf{Witt} space if $m+1$ is even or, when it is  odd, if $H^{\frac{m}{2}}(L_q)=0$ for each link $L_q$. For more details see, for example, \cite{GM} . 
\begin{cor}
In the same hypothesis of Corollary \ref{compostela}. Suppose moreover that $X$ is a Witt space. Then we get:
\begin{equation}
\label{dakar}
L_{2,max}(T)=L_{2,min}(T),\  \mathcal{L}_{max}(T,\mathcal{S})=\mathcal{L}_{min}(T,\mathcal{S})
\end{equation}
and, if each $q\in sing(X)$ is an attractive fixed point then 
\begin{equation}
\label{cerretodicagli}
\mathcal{L}_{max}(T,\mathcal{S})=\mathcal{L}_{min}(T,\mathcal{S})=\sum_{q\in sing(X)}L_{2,max}(T|_{U_q})=\sum_{q\in sing(X)}L_{2,min}(T|_{U_q})=
\end{equation}
$$=\sum_{q\in sing(X)}\sum_{i<\frac{m+1}{2}}(-1)^i\Tr(B_a^*:H^i(L_q)\rightarrow H^i(L_q)).$$
Finally if each $q\in sing(X)$ is repulsive then we have:
\begin{equation}
\label{udine}
\mathcal{L}_{max}(T,\mathcal{S})=\mathcal{L}_{min}(T,\mathcal{S})=\sum_{q\in sing(X)}L_{2,max}(T^*|_{U_q})=\sum_{q\in sing(X)}L_{2,min}(T^*|_{U_q}).
\end{equation}
\end{cor}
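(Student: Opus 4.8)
The plan is to deduce the statement from two ingredients already present in the paper: the identification of the $L^2$-Lefschetz numbers with intersection Lefschetz numbers, and the defining property of a Witt space. First I would observe that in the de Rham case every $\mathcal P_i=\Delta_i$ is a generalized Laplacian, so by Proposition \ref{ancona} and Corollary \ref{adissabeba} all the quantities involved are independent of the conic metric and of the bundle metrics; hence without loss of generality $g$ may be taken of the form $dr^2+r^2h$ with $h$ independent of $r$ near every $q\in sing(X)$. Then \eqref{cametrino} of Theorem \ref{bordeaux}, compared with \eqref{jerevan} and \eqref{baku} of Corollary \ref{compostela}, gives $L_{2,max}(T)=I^{\underline m}L(f)$ and $L_{2,min}(T)=I^{\overline m}L(f)$. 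Since $X$ is Witt, the canonical comparison map $I^{\underline m}H^i(X)\to I^{\overline m}H^i(X)$ is an isomorphism for every $i$, and it is compatible with the stratum-preserving map $f$; therefore $I^{\underline m}L(f)=I^{\overline m}L(f)$, whence $L_{2,max}(T)=L_{2,min}(T)$. Because $L_{2,max/min}(T)=\mathcal L_{max/min}(T,\mathcal R)+\mathcal L_{max/min}(T,\mathcal S)$ (see \eqref{sapporo}) with $\mathcal L_{max}(T,\mathcal R)=\mathcal L_{min}(T,\mathcal R)$ (first point of Corollary \ref{adissabeba}, or directly \eqref{osimo}), subtracting gives $\mathcal L_{max}(T,\mathcal S)=\mathcal L_{min}(T,\mathcal S)$, which is \eqref{dakar}.

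Next I would treat the attractive case. By \eqref{camerino} of Theorem \ref{bordeaux} (equivalently Theorem \ref{pietrogrado}), $\mathcal L_{max/min}(T,\mathcal S)=\sum_{q\in sing(X)}L_{2,max/min}(T|_{U_q})$. Each local number is the alternating trace of $(f|_{U_q})^*$ on the cohomology of the local Fredholm complex (the local analogue of Theorem \ref{santiago} recorded in the remark preceding Corollary \ref{vasto}); by Cheeger's computation \eqref{stia} these groups are $H^i(L_q)$ for $i<\frac{m+1}{2}$ and vanish for $i\ge\frac{m+1}{2}$, and by Proposition \ref{abbiategrasso} the induced endomorphism is $B^*$. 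Hence $\sum_q L_{2,max}(T|_{U_q})=\sum_q\sum_{i<\frac{m+1}{2}}(-1)^i\Tr(B^*\colon H^i(L_q)\to H^i(L_q))$, and combining with $\mathcal L_{max}(T,\mathcal S)=\mathcal L_{min}(T,\mathcal S)=\sum_q L_{2,max/min}(T|_{U_q})$ this yields the whole chain of equalities in \eqref{cerretodicagli}. For the repulsive case one argues identically, with \eqref{camerinone} in place of \eqref{camerino}: there $\mathcal L_{max}(T,\mathcal S)=\sum_q L_{2,min}(T^*|_{U_q})$ and $\mathcal L_{min}(T,\mathcal S)=\sum_q L_{2,max}(T^*|_{U_q})$, and these coincide by \eqref{dakar}, which is \eqref{udine}.

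The main obstacle, and the only step that is not pure bookkeeping with formulas already proved, is the first one: one must make sure the Witt hypothesis enters exactly in the form ``$I^{\underline m}H^*(X)\cong I^{\overline m}H^*(X)$ compatibly with $f^*$'', and that this identification is consistent with the analytic identifications $H^i_{2,max}\cong I^{\underline m}H^i$, $H^i_{2,min}\cong I^{\overline m}H^i$ of Theorem \ref{ragusa}. A purely analytic alternative, bypassing intersection cohomology, would be to show directly that $\sum_i(-1)^i\zeta_{T_i,q}(\Delta_{abs,i})(0)=\sum_i(-1)^i\zeta_{T_i,q}(\Delta_{rel,i})(0)$ from the explicit Bessel kernels of Proposition \ref{saronno}, using the interchange $I_{p^+(\lambda)}\leftrightarrow I_{p^-(\lambda)}$ under Hodge duality on the link together with the vanishing forced by the Witt condition on $L_q$; but the route through intersection cohomology above is shorter and is the one I would write out.
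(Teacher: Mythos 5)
Your proof is correct, and for \eqref{cerretodicagli} and \eqref{udine} it follows essentially the same bookkeeping as the paper (combine \eqref{dakar} with \eqref{camerino} and \eqref{wolfsburg}, respectively with \eqref{camerinone}). Where you genuinely diverge is in the proof of \eqref{dakar}: the paper never passes through intersection cohomology here, but invokes Cheeger's theorem (\cite{JEC}) that on a Witt space each $\Delta_i$ with core $\Omega^i_c(reg(X))$ is essentially self-adjoint, so that $d_{max,i}=d_{min,i}$ for every $i$; the maximal and minimal Hilbert complexes then literally coincide, hence $L_{2,max}(T)=L_{2,min}(T)$ and $\mathcal{L}_{max}(T,\mathcal{S})=\mathcal{L}_{min}(T,\mathcal{S})$ are immediate, and in fact $\mathcal{P}_{abs,i}=\mathcal{P}_{rel,i}$, so every max/min object coincides, not just the alternating sums. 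Your route instead identifies $L_{2,max/min}(T)$ with $I^{\underline{m}}L(f)$ and $I^{\overline{m}}L(f)$ by comparing \eqref{cametrino} with \eqref{jerevan} and \eqref{baku}, and then uses that on a Witt space the canonical comparison $I^{\underline{m}}H^*(X)\rightarrow I^{\overline{m}}H^*(X)$ is an isomorphism commuting with $f^*$; this is a true and standard fact, but its naturality under the stratum-preserving map $f$ is an extra topological input neither proved nor cited in the paper, whereas the essential self-adjointness argument is one line, purely analytic, and gives the stronger conclusion that the two complexes are equal. One small point of care: your phrase ``each local number'' together with \eqref{stia} should be restricted to the maximal extension, since \eqref{stia} computes $H^i_{2,max}$ of the cone, while $H^i_{2,min}(U_q-\{q\})$ carries relative conditions at the outer boundary $r=2$ and is not $H^i(L_q)$ in low degrees; your conclusion is unaffected because, exactly as in the paper, you only use the maximal computation and recover the minimal side from \eqref{dakar} and \eqref{camerino}.
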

\begin{proof}
\eqref{dakar} follows by the fact that, as it is showed in \cite{JEC}, if $X$ is  a Witt space then for each $i$,  $\Delta_i:\Omega^i_c(reg(X))\rightarrow \Omega^i_c(reg(X))$ is essentially self-adjoint as unbounded operator acting on $L^2\Omega(reg(X),g)$ and this implies that $d_{max,i}=d_{min,i}$ for $i=0,...,m+1$.  \eqref{cerretodicagli} follows by \eqref{dakar} combined with  \eqref{camerino} and \eqref{wolfsburg}. Finally \eqref{udine} follows from the fact that $X$ is Witt and from Theorem \ref{pietrogrado}.
\end{proof}


\subsection{Some further results arising from Cheeger's work on the heat kernel}

The aim of this section is to approach the $L^2-$Lefschetz numbers of the $L^2-$de Rham complex  using the results of Cheeger stated   in \cite{JEC} and in \cite{JC}. For simplicity assume that $X$ is a Witt space.  As recalled previously, if $X$ is  a Witt space and if over $reg(X)$ we put a conic metric, then $\Delta_i:L^{2}\Omega^{*}(reg(X),g)\rightarrow L^2\Omega^*(reg(X),g)$ is essentially self-adjoint for each $i=0,...,m+1$,  with core domain given by the smooth compactly supported forms. In particular this implies that, if $dimX=m+1$, then  for each $i=0,...,m+1$, $d_{max,i}=d_{min,i}$. Therefore, for each map $f:X\rightarrow X$ that induces a geometric endomorphism $T$ as in Theorem \ref{bordeaux}, we have just one $L^2-$Lefschetz number that we label $L_{2}(T)$.\\Now we recall briefly the results we need and we refer to \cite{JEC} and in particular to \cite{JC}, section 3, for the complete details and for the proofs. Let $N$ be an oriented closed manifold of dimension $m$ and let $C(N)$ be the cone over $N$. Endow $reg(C(N))$ with a conic metric $g=dr^2+r^2h$ where $h$ is a riemannian metric over $N$. In the mentioned papers Cheeger introduce four types of differential forms over $reg(C(N))$, called forms of type 1, 2, 3 and 4, such that each eigenform of $\Delta_i$, the Laplacian acting on the $i-$forms over $reg(C(N))$, can be expressed as convergent sum of these forms. For the definition of these forms see \cite{JC} pag. 586-588.

The main reason to introduce these four types of forms is that now we can break the heat operator in four pieces, see \cite{JC} pag. 90-92: $$e^{-t\Delta_i}=\ _{1}e^{-t\Delta_i}+\ _{2}e^{-t\Delta_i}+\ _{3}e^{-t\Delta_i}+\ _{4}e^{-t\Delta_i}$$ where, for each $l=1,...,4$,  $_{l}e^{-t\Delta_i}$ is the heat operator built using the $i-$forms of type $l$. As it is showed in \cite{JC}, pag. 590-592,  it is possible to give an explicit expression for $_{l}e^{-t\Delta_i}.$ In particular for type 1 forms we have:
\begin{equation}
\label{nocera}
_{1}e^{-t\Delta_i}=(r_1r_2)^{a(i)}\sum_j\int_0^{\infty}e^{-t\lambda^2}J_{\nu_{j}(i)}(\lambda r_1)J_{\nu_{j}(i)}(\lambda r_2)\lambda d\lambda\phi^i_{j}(p_1)\otimes \phi^i_j(p_2)= 
\end{equation}
\begin{equation}
\label{cerreto}
=(r_1r_2)^{a(i)}\sum_j\frac{1}{2t}e^{-\frac{r^2_1+r^2_2}{4t}}I_{\nu_j(i)}(\frac{r_1r_2}{2t})\phi^i_j(p_1)\otimes \phi^i_j(p_2)
\end{equation}
where $I_{\nu_{j}(i)}$ is the modified Bessel function (see \cite{MAL} pag. 67), $a(i)=\frac{1}{2}(1+2i-m),\ \nu_{j}(i)=(\mu_j+a^2(i))^{\frac{1}{2}}$  and $a_{j}^{\pm}(i)=a(i)\pm \nu_j(i)$.
The corresponding expression for type 2 forms is 
\begin{equation}
\label{taormina}
_{2}e^{-t\Delta_i}=\sum_jd_1d_2((r_1r_2)^{a(i-1)}\int_{0}^{\infty}e^{-t\lambda^2}J_{\nu_j(i-1)}(\lambda r_1)J_{\nu_j(i-1)}(\lambda r_2)\lambda^{-1}d\lambda \phi_{j}^{i-1}(p_1)\otimes \phi_j^{i-1}(p_2))
\end{equation}
The expression for forms of type 3 is:
\begin{equation}
\label{mugello}
_{3}e^{-t\Delta_i}=\sum_j\int_{0}^{\infty}e^{-t\lambda^2}((-a(i-1)r_1^{a(i-1)}J_{\nu_{j}(i-1)}(\lambda r_1)+r_1^{a(i-1)+1}J'_{\nu_{j}(i-1)}(\lambda r_1)\lambda)\frac{d\phi_j^{i-1}(p_1)}{\sqrt{\mu_j}}
\end{equation}
$$+r_{1}^{a(i-1)-1}J_{\nu_j(i-1)}(\lambda r_1)dr_1\wedge\sqrt{\mu_j}\phi_j^{i-1}(p_1))\otimes((-a(i-1)r_2^{a(i-1)}J_{\nu_j}(\lambda r_2)+$$ $$+r_2^{a(i-1)+1}J'_{\nu_j(i-1)}(\lambda r_2)\lambda)\frac{d\phi_j^{i-1}(p_2)}{\sqrt{\mu_j}}+r_{2}^{a(i-1)-1}J_{\nu_{j}}(\lambda r_2)dr_2\wedge \sqrt{\mu_j}\phi_j^{i-1}(p_2))\lambda^{-1}d\lambda$$
Finally for forms of type 4 we have:
$$_{4}e^{-t\Delta_i}=(r_1r_2)^{a(i-1)}\sum_{j}\int_{0}^{\infty}e^{-t\lambda^2}J_{\nu_j(i-2)}(\lambda r_1)J_{\nu_j(i-2)}(\lambda r_2)\lambda d\lambda dr_1\wedge\frac{d\phi_j^{i-2}(p_1)}{\sqrt{\mu_j}}\otimes dr_2\wedge \frac{d\phi_{j}^{i-2}(p_2)}{\sqrt{\mu_j}}=$$
\begin{equation}
\label{belluno}
=(r_1r_2)^{a(i-2)}\sum_j\frac{1}{2t}e^{-\frac{r_1^2+r_2^2}{4t}}I_{\nu_j(i-2)}(\frac{r_1r_2}{2t})dr_1\wedge\frac{d\phi_j^{i-2}(p_1)}{\sqrt{\mu_j}}\otimes dr_2\wedge \frac{d\phi_{j}^{i-2}(p_2)}{\sqrt{\mu_j}}
\end{equation}
Now suppose that for each point $q\in sing(X)$, over a neighborhood $U_q\cong C_2(L_q)$, $f$  satisfies \eqref{bergamo}.
Using Cheeger's results recalled above,  it make sense to break $T\circ e^{-t\Delta_{i}}$, over $C_2(L_q)$,  as a sum of four pieces such that:
\begin{equation}
\label{taranto}
\lim_{t\rightarrow 0}\Tr(T\circ e^{-t\Delta_{i}})=\lim_{t\rightarrow 0}\Tr(T\circ\  _{1}e^{-t\Delta_{i}}+T\circ\  _{2}e^{-t\Delta_{i}}+T\circ\  _{3}e^{-t\Delta_{i}}+T\circ\ _{4}e^{-t\Delta_{i}}).
\end{equation}
Moreover, using \eqref{tebe}, \eqref{bergamo}, \eqref{cerreto} and  \eqref{belluno}  it is clear that on  $reg(C_2(L_q))$ we have:
\begin{equation}
\label{islaguadalupe}
\tr(T\circ\ _{1}e^{-t\Delta_i})(r,p)=(cr^2)^{a(i)}\sum_j\frac{1}{2t}e^{-\frac{r^2(c^2+1)}{4t}}I_{\nu_j(i)}(\frac{cr^2}{2t})\tr(B^*\phi^i_j\otimes B^*\phi^i_j)
\end{equation}
and analogously
\begin{equation}
\label{miami}
\tr(T\circ\ _{4}e^{-t\Delta_i})(r,p)=(cr^2)^{a(i-2)}\sum_j\frac{1}{2t}e^{-\frac{r^2(c^2+1)}{4t}}I_{\nu_j(i-2)}(\frac{cr^2}{2t})\tr(dr\wedge\frac{d(B^*\phi^{i-2}_j)}{\sqrt{\mu_j}}\otimes dr\wedge \frac{d(B^*\phi^{i-2}_j)}{\sqrt{\mu_j}})
\end{equation}
Now we are in position to state the following result:
\begin{teo}
\label{cannara}
Let $X$, $g$  and $f$ be as in Theorem \ref{bordeaux} such that $dimX=m+1$. Suppose moreover that $X$ is a Witt space and that, on each neighborhood $U_q\cong C_2(L_q)$ of each point $q\in sing(X)$,  $f$ satisfies \eqref{bergamo} and $g$ takes the form $g=dr^2+r^2h$ where $h$  does not depend on $r$. Then, for each $q\in sing(X)$, we have:
\begin{enumerate}
\item The forms of type 1 give a contribution only in degree 0.
\item The contribution given by $q$ in degree zero depends only on the forms of type 1 and we have 
\begin{equation}
\label{ulanbator}
\zeta_{T_0,q}(\Delta_0)(0)=\frac{c^{\frac{1-m}{2}}}{4}(\int_{0}^{\infty}e^{-u(c^2+1)} \sum_jI_{\nu_j(0)}(\frac{cu}{2})du)(\Tr(B^*\phi^i_j\otimes B^*\phi^i_j))
\end{equation}
\item The forms of type 4 give a contribution only in degree 2 and this contribution is 
\begin{equation}
\label{bengasi}
\Tr(T_2\circ\ _{4}e^{-t\Delta_2}) =\frac{c^{\frac{1-m}{2}}}{4}(\int_{0}^{\infty}e^{-\frac{u(c^2+1)}{4}}\sum_jI_{\nu_j(0)}(\frac{cu}{2})du)(\Tr(dr\wedge\frac{d(B^*\phi^{i-2}_j)}{\sqrt{\mu_j}}\otimes dr\wedge \frac{d(B^*\phi^{i-2}_j)}{\sqrt{\mu_j}}))
\end{equation}
where $\Tr(T_2\circ\ _{4}e^{-t\Delta_2})$ is taken over $reg(C_2(L_q))$.
\item The contribution given by $q$ in the others degrees, that is $i\neq 0,2$, depends only on the forms of type 2 and 3. 
\end{enumerate}
\end{teo}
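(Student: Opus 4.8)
The plan is to localise at a single $q\in sing(X)$ and work on $reg(C_2(L_q))$ with the model metric $g=dr^2+r^2h$ and the model operator, using Cheeger's decomposition $e^{-t\Delta_i}=\ _{1}e^{-t\Delta_i}+\ _{2}e^{-t\Delta_i}+\ _{3}e^{-t\Delta_i}+\ _{4}e^{-t\Delta_i}$ together with the explicit twisted-diagonal traces \eqref{islaguadalupe} and \eqref{miami}. Since $X$ is Witt, $\Delta_i$ is essentially self-adjoint and, after composing with $T_i$, each $\ _{l}e^{-t\Delta_i}$ has an integrable trace along the graph of $f$; hence the contribution of $q$ to $\zeta_{T_i,q}(\Delta_i)(0)$ splits as $\sum_{l=1}^{4}\lim_{t\to0}\Tr\big(T_i\circ\ _{l}e^{-t\Delta_i}\big)$ (trace over $reg(C_2(L_q))$), and I would analyse the four limits separately.

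For the type $1$ forms I would substitute \eqref{islaguadalupe} into $\int_{reg(C_2(L_q))}(\,\cdot\,)\,dvol_g=\int_0^2\!\!\int_{L_q}(\,\cdot\,)\,r^m\,dr\,dvol_h$ and carry out the change of variables $u=r^2/t$, exactly as in the proofs of Theorems \ref{edimburgo} and \ref{granada}. With $a(i)=\tfrac12(1+2i-m)$ one has $\tfrac{1}{2t}(cr^2)^{a(i)}r^m\,dr=\tfrac{c^{a(i)}}{4}\,t^{\,i}\,u^{\,i}\,du$, so that
\[
\Tr\big(T_i\circ\ _{1}e^{-t\Delta_i}\big)=t^{\,i}\,\frac{c^{a(i)}}{4}\int_0^{4/t}u^{\,i}\,e^{-u(c^2+1)/4}\sum_j I_{\nu_j(i)}\!\Big(\tfrac{cu}{2}\Big)\,\tr\big(B^{*}\phi^i_j\otimes B^{*}\phi^i_j\big)\,du .
\]
The integral converges to its value at $+\infty$ as $t\to0$: near $u=0$ one uses $I_\nu(z)\sim (z/2)^\nu/\Gamma(\nu+1)$, while near $u=\infty$ one uses $I_\nu(z)\sim e^{z}/\sqrt{2\pi z}$, which reduces the integrand to a constant times $u^{-1/2}e^{-u(c-1)^2/4}$ — integrable precisely because $c\neq1$. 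This is exactly where the simple-fixed-point hypothesis \eqref{bergamo} enters, and the convergence of the $j$-sum is Cheeger's eigenvalue estimate from \cite{JC}. Consequently $\lim_{t\to0}\Tr(T_i\circ\ _{1}e^{-t\Delta_i})=0$ for every $i\ge1$, which is statement $1$; and for $i=0$, where the prefactor is $1$ and $a(0)=\tfrac{1-m}{2}$, the limit is the integral appearing in \eqref{ulanbator}. To finish statement $2$ I would then observe that in degree $0$ the types $2,3,4$ vanish identically, since by \eqref{taormina}, \eqref{mugello}, \eqref{belluno} they involve only the eigenforms $\phi^{i-1}_j$, resp.\ $\phi^{i-2}_j$, on $L_q$, and $L_q$ has no forms of negative degree.

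For the type $4$ forms the same computation applied to \eqref{miami} produces the prefactor $t^{\,a(i-2)-1+(m+1)/2}=t^{\,i-2}$, so $\lim_{t\to0}\Tr(T_i\circ\ _{4}e^{-t\Delta_i})=0$ for $i\ge3$; since the type $4$ piece is identically $0$ for $i=0,1$ (it carries $\phi^{i-2}_j$), type $4$ can contribute only in degree $2$, which is statement $3$, and in that degree $a(i-2)=a(0)=\tfrac{1-m}{2}$, so the surviving limit is exactly \eqref{bengasi}. Statement $4$ is then purely formal: for $i\notin\{0,2\}$ the type $1$ term vanishes (as $i\neq0$) and the type $4$ term vanishes (absent for $i\le1$, of order $t^{\,i-2}\to0$ for $i\ge3$), so only the types $2$ and $3$ survive. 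The one genuinely delicate point is the term-by-term passage to the limit — that $T_i\circ\ _{l}e^{-t\Delta_i}$ is trace class with $j$-summable graph trace and that $\lim_{t\to0}$ commutes with $\sum_j$ and with $\int_0^{4/t}$ — which I would handle by invoking the estimates of \cite{JC}, the exponential gain $e^{-u(c-1)^2/4}$ coming from $c\neq1$ again doing the essential work.
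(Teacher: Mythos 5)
Your proposal is correct and follows essentially the same route as the paper's proof: split via Cheeger's four-type decomposition, substitute $u=r^2/t$ to extract the prefactors $t^{i}$ (type $1$) and $t^{i-2}$ (type $4$), and use the degree constraints (types $2,3,4$ absent in degree $0$, type $4$ absent in degrees $0,1$) to isolate the surviving contributions, which is exactly what the paper does. If anything, you are more explicit than the paper on the one delicate point — the convergence of $\int_0^{4/t}$ as $t\to0$, which you justify via the Bessel asymptotics and the exponential gain $e^{-u(c-1)^2/4}$ from $c\neq1$, where the paper only appeals to ``the asymptotic behavior of the integrand''.
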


\begin{proof}
First of all we note that from \eqref{cerreto}, \eqref{taormina}, \eqref{mugello} and \eqref{belluno}  it follows that $_{1}e^{-t\Delta_i}=e^{-t\Delta_i}$  for $i=0$ and that $_{4}e^{-t\Delta_i}$ occurs only for $i\geq 2$. Now,
using \eqref{cerreto} and \eqref{islaguadalupe} we know that, over $reg(C_2(L_q))$, $$\lim_{t\rightarrow 0}\Tr(T_i\circ\ _{1}e^{-t\Delta_i})=\lim_{t\rightarrow 0}\int_{0}^2\int_{L_q}(cr^2)^{a(i)}\sum_j\frac{1}{2t}e^{-\frac{r^2(c^2+1)}{4t}}I_{\nu_j(i)}(\frac{cr^2}{2t})\tr(B^*\phi^i_j\otimes B^*\phi^i_j)r^mdrdvol_h.$$ Clearly this last term it is in turn equal to 
\begin{equation}
\label{cartagine}
\lim_{t\rightarrow 0}((\int_0^2(cr^2)^{a(i)}\frac{1}{2t}e^{-\frac{r^2(c^2+1)}{4t}}\sum_jI_{\nu_j(i)}(\frac{cr^2}{2t})r^mdr)(\Tr(B^*\phi^i_j\otimes B^*\phi^i_j)))
\end{equation}
and therefore, to get the first two points we have to calculate
\begin{equation}
\label{lubiana}
\lim_{t\rightarrow 0}\int_0^2(cr^2)^{a(i)}\frac{1}{2t}e^{-\frac{r^2(c^2+1)}{4t}}\sum_jI_{\nu_j(i)}(\frac{cr^2}{2t})r^mdr
\end{equation}

First of all remember that $a(i)=\frac{1}{2}(1-m+2i)$; therefore $r^{2a(i)}r^m=r^{2i+1}$. Now put $\frac{r^2}{t}=u$.  It follows immediately that $dr=\frac{tdu}{2r}$. Now, by the fact that $r^2=tu$ it follows that $r^{2i+1}=t^iu^ir$ and therefore we also get $r^{2i+1}dr=\frac{t^{i+1}u^idu}{2}$. Moreover when $r$ goes to $2$ then $u$ goes to $\frac{2}{t}$ and when $r$ goes to $0$ then $u$ goes to $0$. In this way we have 
\begin{equation}
\label{canossa}
\lim_{t\rightarrow 0}\frac{c^{a(i)}}{4}t^i\int_0^{\frac{2}{t}}e^{-u(c^2+1)}\sum_jI_{\nu_j(i)}(\frac{c^2u}{2})u^idu
\end{equation}
Now, by the asymptotic behavior of the integrand it follows that $$\lim_{t\rightarrow 0}\frac{c^{a(i)}}{4}\int_0^{\frac{2}{t}}e^{-u(c^2+1)}\sum_jI_{\nu_j(i)}(\frac{c^2u}{2})u^idu=\frac{c^{a(i)}}{4}\int_0^{\infty}e^{-u(c^2+1)}\sum_jI_{\nu_j(i)}(\frac{c^2u}{2})u^idu.$$
Therefore we can conclude that 
\begin{equation}
\label{anagni}
\eqref{canossa}=\left\{
\begin{array}{ll}
\frac{c^\frac{1-m}{2}}{4}\int_0^{\infty}e^{-u(c^2+1)}\sum_jI_{\nu_j(0)}(\frac{c^2u}{2})du & i=0\\
0 & i>0
\end{array}
\right.
\end{equation} 

In this way we proved the first and the second assertion. For the third statement the proof is completely analogous to the previous one. Also in this case it is clear that in order to establish the assertion we have to calculate: $$\lim_{t\rightarrow 0}c^{a(i-2)+1}\int_0^2\frac{1}{2t}e^{-\frac{r^2(c^2+1)}{4t}}\sum_jI_{\nu_j(i-2)}(\frac{cr^2}{2t})r^{2i-3}dr.$$ Now if we put again $\frac{r^2}{t}=u$ the remaining part of the proof is completely analogous to that one of the first two points.\\Finally the last point follows from the first three points. 
\end{proof}

\end{document}